\newtheorem*{thm*}{Theorem}
\newcommand{\ff}{{\mathcal F}}
\newcommand{\aaa}{{\mathcal A}}
\newtheorem*{cla*}{Claim}
\newcommand{\bb}{{\mathcal B}}
\newcommand{\G}{{\mathcal G}}
\newtheorem{thm}{Theorem}
\newtheorem{gypo}{Conjecture}
\newtheorem{prb}{Problem}
\newtheorem{lem}[thm]{Lemma}
\newtheorem{cor}[thm]{Corollary}
\date{}
\newtheorem{prop}[thm]{Proposition}
\newtheorem{obs}[thm]{Observation}
\newtheorem{defn}{Definition}
\title{Diversity}
\author{Peter Frankl}\address{R\'enyi Institute, Budapest, Hungary; Email: {\tt peter.frankl@gmail.com}}
\author{Andrey Kupavskii}
\address{University of Birmingham and
Moscow Institute of Physics and Technology; Email: {\tt kupavskii@ya.ru}.} \thanks{The research of the second author was supported by the Advanced Postdoc.Mobility grant no. P300P2\_177839 of the Swiss National Science Foundation.}
\date{}
\begin{document}
\maketitle
\begin{abstract}
 Given a family $\ff\subset 2^{[n]}$, its {\it diversity} is the number of sets not containing an element with the highest degree. The concept of diversity has proven to be very useful in the context of $k$-uniform intersecting families. In this paper, we  study (different notions of) diversity in the context of other extremal set theory problems. One of the main results of the paper is a sharp stability result for cross-intersecting families in terms of diversity and, slightly more generally, sharp stability for the Kruskal--Katona theorem.
\end{abstract}
\section{Introduction and main results}
Given a family $\ff\subset 2^{[n]}$, we define its {\it diversity} $\gamma(\ff)$ to be $\gamma(\ff):=\min_{i\in [n]}|\{F\in \ff: i\notin F\}|$. The {\it maximum degree} $\Delta(\ff)$ of $\ff$ is $\Delta(\ff):=\max_{i\in [n]}|\{F\in \ff: i\in F\}|$. That is, $\gamma(\ff)+\Delta(\ff)=|\ff|$.
We say that a family $\ff$ is {\it intersecting} if $F_1\cap F_2\ne \emptyset$ for any $F_1,F_2\in \ff$. To the best of our knowledge, the problem of maximizing diversity first appeared explicitly in the following question of Katona: what is the maximum possible diversity of an intersecting family $\ff\subset {[n]\choose k}$? Lemons and Palmer \cite{LP} showed that $\gamma(\ff)\le {n-3\choose k-2}$ for $n\ge 6k^3$ (they used the term ``unbalance'' instead of diversity). Later, this bound on $n$ was improved to $n\ge 6k^2$ by the first author \cite{Fra6} and to $n\ge Ck$ with some absolute constant $C$ by the second author \cite{Kup21}.

Diversity is also closely related to the concept of influence of a Boolean function, which we discuss later in the paper. Katona's question was motivated by the work of Dinur and Friedgut \cite{DF}, who studied  approximations of intersecting families by juntas using the tools coming from the Analysis of Boolean functions. In particular, using these tools, they showed that $\gamma(\ff)=O({n-2\choose k-2})$ for $n>Ck$. We note that this result in itself was not new: the first author showed $\gamma(\ff)\le 3{n-2\choose k-2}$ for {\it any} $n>2k>0$ in \cite{Fra1}. However, the more general approximation by juntas result of \cite{DF} was an important tool in the proof in \cite{Kup21}.

Diversity proved to be a very useful concept in the study of intersecting families. It was recently applied to different problems in \cite{Feg, FK5, IK, Kup22}. One of the tools that was particularly fruitful is a recent theorem that allows us to bound the size of $k$-uniform intersecting families based on their diversity, proven by the second author and Zakharov in \cite{KZ}. This theorem is a stronger version of an old result due the first author \cite{Fra1}. Both results provide strong and in many cases sharp stability versions of the Erd\H os-Ko-Rado \cite{EKR} theorem, which states that any intersecting family $\ff\subset {[n]\choose k}$ has size at most ${n-1\choose k-1}$, provided $n\ge 2k$. An even stronger version of these theorems, which is sharp in {\it all} cases, was proven in \cite{Kup22}.

For a real $x\ge k$, let us put ${x\choose k}:=\frac {x(x-1)\ldots(x-k+1)}{k!}$, and put ${x\choose k}:=0$ for $x<k$.
\begin{thm}[\cite{KZ}]\label{thmkz} Let $n>2k>0$ and $\ff\subset {[n]\choose k}$ be an intersecting family. Then, if $\gamma(\ff)\ge {n-u-1\choose n-k-1}$ for some real $3\le u\le k$, then \begin{equation}\label{eqkz}|\ff|\le {n-1\choose k-1}+{n-u-1\choose n-k-1}-{n-u-1\choose k-1}.\end{equation}
\end{thm}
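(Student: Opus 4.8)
The plan is to move to a maximum-degree element, recast the intersecting condition as a shadow-avoidance condition, and then invoke the Kruskal--Katona theorem in a form sharp enough to also control near-equality.

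Fix an element, say $1$, of maximum degree in $\ff$, and write $\aaa=\{A\in\binom{[2,n]}{k-1}:A\cup\{1\}\in\ff\}$ and $\bb=\{B\in\ff:1\notin B\}\subseteq\binom{[2,n]}{k}$, so $|\ff|=|\aaa|+|\bb|$ and $|\bb|=\gamma(\ff)=:\gamma$. Since $\ff$ is intersecting, $\bb$ is intersecting and $\aaa,\bb$ are cross-intersecting on the $(n-1)$-element set $[2,n]$; since $1$ has maximum degree, also $\deg_\aaa(j)+\deg_\bb(j)\le|\aaa|$ for every $j\in[2,n]$. (If $\bb=\emptyset$ then $\gamma=0$, contradicting $\gamma\ge\binom{n-u-1}{n-k-1}\ge1$; if $\aaa=\emptyset$ then every element has degree $0$, again a contradiction, so both families are nonempty.)

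Next, rephrase cross-intersection via complements: put $\bar\bb=\{[2,n]\setminus B:B\in\bb\}\subseteq\binom{[2,n]}{n-k-1}$; then $\aaa,\bb$ being cross-intersecting means exactly that no $A\in\aaa$ is a subset of a member of $\bar\bb$, i.e.\ $\aaa$ is disjoint from $\partial^{(n-2k)}\bar\bb$, the $(n-2k)$-fold lower shadow of $\bar\bb$ (which lands in $\binom{[2,n]}{k-1}$, as $n>2k$). Hence
\[
|\ff|=|\aaa|+|\bb|\le\binom{n-1}{k-1}-\bigl|\partial^{(n-2k)}\bar\bb\bigr|+\gamma .
\]
Writing $\gamma=|\bar\bb|=\binom{y}{n-k-1}$ for the unique real $y\ge n-k-1$ and applying Kruskal--Katona in Lov\'asz's form, $\bigl|\partial^{(n-2k)}\bar\bb\bigr|\ge\binom{y}{k-1}$, so
\[
|\ff|\le\binom{n-1}{k-1}+\binom{y}{n-k-1}-\binom{y}{k-1}.
\]
Since $\gamma\ge\binom{n-u-1}{n-k-1}$ forces $y\ge n-u-1$, and the right-hand side at $y=n-u-1$ is precisely \eqref{eqkz}, this already settles the theorem when $\gamma$ equals the threshold $\binom{n-u-1}{n-k-1}$.

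The main obstacle is that $y\mapsto\binom{y}{n-k-1}-\binom{y}{k-1}$ is not monotone for $y\ge n-k-1$ — it eventually increases, since $n-k-1>k-1$ — so for $\gamma$ strictly above the threshold the displayed bound overshoots \eqref{eqkz}, and cross-intersection alone is not enough. To close the gap I would bring back the two ingredients set aside above. First, one may assume $u$ is the least value in $[3,k]$ with $\gamma\ge\binom{n-u-1}{n-k-1}$: otherwise apply the theorem with $u-1$ in place of $u$, whose bound is no larger because \eqref{eqkz} is increasing in $u$ on $[3,k]$. For $u\ge 4$ this caps $y<n-u$, a range on which the above function is still decreasing, so those cases follow. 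For the remaining case $u=3$ — where no a priori upper bound on $\gamma$ is available — one must combine the per-element inequality $\deg_\aaa(j)+\deg_\bb(j)\le|\aaa|$ with a sharp stability version of Kruskal--Katona: if $\bar\bb$ is far from an initial segment of the colex order, stability makes $\bigl|\partial^{(n-2k)}\bar\bb\bigr|$ strictly exceed $\binom{y}{k-1}$ and thus lowers the bound on $|\ff|$; if $\bar\bb$ is close to an initial segment, then the members of $\bb$ essentially share a fixed block of coordinates, so some $j$ has $\deg_\bb(j)$ close to $\gamma$, and then $\deg_\aaa(j)+\deg_\bb(j)\le|\aaa|$ together with $|\aaa|\le\binom{n-1}{k-1}-\bigl|\partial^{(n-2k)}\bar\bb\bigr|$ caps $\gamma$ well before $y$ reaches the turning point. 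Establishing the sharp stability form of Kruskal--Katona and checking that these two estimates meet exactly at the value needed for \eqref{eqkz} is the technical heart of the argument; a preliminary compression of $\ff$, which provides extra structure such as $\partial\bb\subseteq\aaa$, may be used to streamline it, provided one is careful that compression does not destroy the hypothesis on $\gamma$.
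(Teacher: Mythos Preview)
Your split at the maximum-degree vertex and the reduction via complements and Lov\'asz--Kruskal--Katona is correct and gives
\[
|\ff|\le \binom{n-1}{k-1}+g(y),\qquad g(y):=\binom{y}{n-k-1}-\binom{y}{k-1},\qquad \gamma=\binom{y}{n-k-1}.
\]
The monotonicity you invoke (``\eqref{eqkz} is increasing in $u$'' and ``$g$ decreasing for $y<n-4$'') is not obvious; it is precisely Lemma~\ref{lemkk} of the paper with $m=n-1$, $s=t=k$, which shows $g$ is decreasing on $[n-k-1,n-4]$ and moreover $g(y)\le g(n-4)$ for $y\in[n-4,n-3]$. Granting this, your argument actually covers the entire range $\gamma\le\binom{n-3}{n-k-1}$ (not just the ``minimal $u\ge4$'' subcase), and this is exactly Lemma~\ref{lemproof1} of the paper specialised to $a=b=k$, $\aaa=\bb$.

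The genuine gap is the regime $\gamma>\binom{n-3}{n-k-1}$, i.e.\ $y>n-3$. Here your proposal --- combine Kruskal--Katona stability with the per-vertex inequality $\deg_{\aaa}(j)+\deg_{\bb}(j)\le|\aaa|$ --- remains a heuristic; you neither specify which stability statement, nor verify that the two bounds meet at the value \eqref{eqkz}. Worse, the sharp stability statement that would fit here is essentially Theorem~\ref{thmstabkk}, which in this paper is \emph{derived from} the cross-intersecting generalisation of the very theorem you are proving, so appealing to it would be circular within this framework. The paper handles this regime by a completely different mechanism (Lemma~\ref{lemproof2}): one transforms the family by ordinary shifts and Daykin shifts $S_{U,V}^{lex}$ until its diversity drops into the range $\gamma\le\binom{n-3}{k-2}=\binom{n-3}{n-k-1}$, while showing that along the way the diversity never falls \emph{below} the threshold $\binom{n-u-1}{n-k-1}$. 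The key device for this control --- which your last sentence correctly flags as the crux --- is Lemma~\ref{lemcorrshift} (shifted families are positively correlated): if a single shift dropped the diversity too much, the link families $\ff(i\bar j)$ and $\ff(\bar ij)$ would be large and nearly disjoint, which the correlation inequality forbids once both are shifted. This self-contained compression argument replaces any external appeal to Kruskal--Katona stability.
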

The bound from Theorem \ref{thmkz} is sharp for integer $u$, as witnessed by the family $\mathcal L_{u,u}(n,k)$, defined after Theorem~\ref{thmmain1}.

In this paper, we conduct a more systematic study of diversity in the context of some classic extremal set theory problems.  For an integer $t\ge1$, we say that the families $\ff_1,\ldots, \ff_l\subset 2^{[n]}$ are {\it cross $t$-intersecting} if $|F_1\cap \ldots\cap F_l|\ge t$ for any $F_1\in\ff_1,\ldots,  F_l\in \ff_l$. If $t=1$ then we use the term {\it cross-intersecting}. The following theorem is a generalization of the Erd\H os--Ko--Rado theorem.
\begin{thm}[\cite{Day3}]\label{thmekrcross} Let $a,b$ be positive integers and $n\ge a+b$. Suppose that $\aaa\subset {[n]\choose a}$ and $\bb\subset {[n]\choose b}$ are cross-intersecting and  $|\aaa|\ge {n-1\choose a-1}$. Then $|\bb|\le {n-1\choose b-1}$.
\end{thm}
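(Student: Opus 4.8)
The plan is to reduce the statement to the Kruskal--Katona theorem by passing to complements. We may assume $\bb\neq\emptyset$, since otherwise there is nothing to prove. Set $\bb^{c}:=\{[n]\setminus B:B\in\bb\}\subset\binom{[n]}{n-b}$, and note that $n-b\ge a$ because $n\ge a+b$. For any $A\in\binom{[n]}{a}$ and any $B\in\bb$ we have $A\cap B\neq\emptyset$ if and only if $A\not\subseteq[n]\setminus B$. Hence $\aaa$ and $\bb$ are cross-intersecting exactly when no member of $\aaa$ is a subset of any member of $\bb^{c}$; equivalently, writing $\partial^{(a)}\G:=\{A\in\binom{[n]}{a}:A\subseteq G\text{ for some }G\in\G\}$ for the $a$-th shadow of a family $\G$, the condition is $\aaa\cap\partial^{(a)}\bb^{c}=\emptyset$.

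Since $\aaa$ and $\partial^{(a)}\bb^{c}$ are then disjoint subfamilies of $\binom{[n]}{a}$ and $|\aaa|\ge\binom{n-1}{a-1}$, we obtain
\[
\bigl|\partial^{(a)}\bb^{c}\bigr|\ \le\ \binom{n}{a}-|\aaa|\ \le\ \binom{n}{a}-\binom{n-1}{a-1}\ =\ \binom{n-1}{a}.
\]
Now I would apply the Kruskal--Katona theorem in the real-variable form due to Lov\'asz. Write $|\bb|=|\bb^{c}|=\binom{x}{n-b}$ for the unique real $x\ge n-b$; iterating the shadow inequality from level $n-b$ down to level $a$ (legitimate since $0\le n-b-a$) gives $\bigl|\partial^{(a)}\bb^{c}\bigr|\ge\binom{x}{a}$. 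Combining this with the displayed bound yields $\binom{x}{a}\le\binom{n-1}{a}$. As $n-1\ge a$ (because $n\ge a+b\ge a+1$) and $t\mapsto\binom{t}{a}$ is strictly increasing on $[a,\infty)$, this forces $x\le n-1$, whence $|\bb|=\binom{x}{n-b}\le\binom{n-1}{n-b}=\binom{n-1}{b-1}$, as desired.

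I do not anticipate a genuine obstacle here: the content is carried entirely by the Kruskal--Katona theorem, and the only points that require a little care are the real-variable bookkeeping in the Lov\'asz form and the trivial case $\bb=\emptyset$ dispatched at the outset. If a self-contained argument were preferred, an alternative is to apply simultaneous shifting (compressions) to $\aaa$ and $\bb$ — this preserves both sizes and the cross-intersecting property — reduce to the case of compressed families, and then induct on $n$ with base case $n=a+b$; however, the Kruskal--Katona deduction is shorter and fits naturally with the stability refinements developed later in the paper.
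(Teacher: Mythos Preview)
Your proof is correct and follows essentially the same approach as the paper (Daykin's reduction to Kruskal--Katona via complements); the only cosmetic difference is that you complement $\bb$ and invert the Lov\'asz form, whereas the paper complements $\aaa$, applies Kruskal--Katona in the forward direction to obtain $|\partial^{b}\aaa^{c}|\ge\binom{n-1}{b}$, and bounds $|\bb|$ directly by disjointness. Both routes are equally short and rest on the same idea.
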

If one puts $a=b=k$ and $\aaa=\bb$, then one retrieves the EKR theorem. Again, the bound in Theorem~\ref{thmekrcross} is sharp: take $\aaa$ and $\bb$ to be the families of all sets containing $1$ of sizes $a$ and $b$, respectively.

The key result of this paper is the following stability for Theorem~\ref{thmekrcross} in terms of diversity.
\begin{thm}\label{thmmain1} Let $a,b$ be positive integers and $n\ge a+b$. Suppose that $\aaa\subset {[n]\choose a}$ and $\bb\subset {[n]\choose b}$ are cross-intersecting.
Suppose that
\begin{align*}|\aaa|\ \ge&\ {n-1\choose a-1}-{n-v-1\choose a-1} +{n-u-1\choose n-a-1}\ \ \ {\text and }\\
|\bb|\ \ge&\ {n-1\choose b-1}-{n-u-1\choose b-1}+{n-v-1\choose n-b-1}\end{align*}
for some real $3\le u\le a$, $3\le v\le b$ and, moreover, that at least one of the displayed inequalities is strict. Then
$$\gamma(\aaa)< {n-u-1\choose n-a-1}\ \ \ \text{and}  \ \ \ \gamma(\bb)< {n-v-1\choose n-b-1},$$
moreover, both families have the same (unique) element of the largest degree.
\end{thm}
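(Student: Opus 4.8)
The plan is to argue by contradiction: suppose one of the two diversity bounds fails, say $\gamma(\aaa)\ge {n-u-1\choose n-a-1}$, and derive that the size hypotheses cannot both hold with at least one strict. The natural tool is the cross-intersecting analogue of Theorem~\ref{thmekrcross} combined with a diversity-sensitive refinement, i.e. a cross-intersecting version of Theorem~\ref{thmkz}. First I would fix an element $x\in[n]$ of largest degree in $\aaa$ (so $\gamma(\aaa)=|\{A\in\aaa:x\notin A\}|$) and split both families according to whether they contain $x$: write $\aaa=\aaa_0\cup\aaa_1$, $\bb=\bb_0\cup\bb_1$ where the subscript $1$ means "contains $x$" and $0$ means "misses $x$", and identify $\aaa_1$ with a family $\aaa_1'\subset{[n]\setminus x\choose a-1}$, similarly $\bb_1'$. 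Cross-intersection of $\aaa$ and $\bb$ forces: $\aaa_0$ and $\bb_0$ are cross-intersecting on the ground set $[n]\setminus x$ of size $n-1$; and $\aaa_0$ is cross-intersecting with $\bb_1'$ (since $A\cap(B\cup x)\ne\emptyset$ and $x\notin A$ gives $A\cap B\ne\emptyset$ on $[n]\setminus x$), and symmetrically $\bb_0$ cross-intersects $\aaa_1'$.

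The heart of the argument is then to bound $|\bb_0|$ in terms of $\gamma(\aaa)=|\aaa_0|$. Since $|\aaa_0|\ge{n-u-1\choose n-a-1}={n-u-1\choose a-u}$, one wants: any family on $n-1$ points that is cross-intersecting with an $a$-uniform family of size at least ${n-u-1\choose a-u}$ on those points must be small. Here I would invoke (or reprove, via a shifting/compression argument reducing to the case where $\aaa_0$ is an initial segment in the colex order, which by Kruskal--Katona makes its "up-shadow"/complement structure transparent) the appropriate cross-intersecting inequality. The cleanest route: pass to complements within $[n]\setminus x$, so $\aaa_0$ becomes an $(n-1-a)$-uniform family $\overline{\aaa_0}$, and "$\bb_0$ cross-intersects $\aaa_0$" translates into "every $B\in\bb_0$ is contained in some complement", i.e. a covering/shadow condition; Kruskal--Katona then gives $|\bb_0|\le{n-1\choose b}-{n-u-1\choose b}$ type bounds. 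Doing the same for the $(\aaa_1',\bb_0)$ and $(\bb_1',\aaa_0)$ pairs and adding up the four pieces, using $|\aaa_1|\le{n-1\choose a-1}$ and $|\bb_1|\le{n-1\choose b-1}$ trivially, should yield
$$|\aaa|+\big(\text{deficiency of }|\bb|\big)\ \le\ \text{RHS},$$
contradicting that both displayed inequalities hold with one strict once we feed in $\gamma(\aaa)\ge{n-u-1\choose n-a-1}$. The symmetric assumption $\gamma(\bb)\ge{n-v-1\choose n-b-1}$ is handled identically.

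For the final claim that both families share the same unique maximum-degree element, I would note that the strict inequalities just proven, $\gamma(\aaa)<{n-u-1\choose n-a-1}$ and $\gamma(\bb)<{n-v-1\choose n-b-1}$, force $\Delta(\aaa)=|\aaa|-\gamma(\aaa)$ and $\Delta(\bb)$ to be large — in particular each family has a strictly dominant element, say $x$ for $\aaa$ and $y$ for $\bb$, with nearly all sets through it. If $x\ne y$, then the sets of $\aaa$ through $x$ but not $y$ and the sets of $\bb$ through $y$ but not $x$ are cross-intersecting yet "live" on nearly disjoint coordinates, and a counting contradiction (again via Kruskal--Katona on $[n]\setminus\{x,y\}$) kills this; uniqueness of the dominant element in each family follows because two elements of comparably large degree would each violate the just-established strict diversity bound.

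The step I expect to be the main obstacle is establishing the sharp cross-intersecting inequality that converts "$\gamma(\aaa)\ge{n-u-1\choose n-a-1}$" into a tight upper bound on $|\bb_0|$ (and symmetrically), with the correct binomial coefficients $-{n-u-1\choose b-1}$ and $+{n-v-1\choose n-b-1}$ falling out exactly; this is precisely where the Kruskal--Katona theorem (and its sharp stability form, the real subject of the paper) must be used with care, and where the non-integer real parameters $u,v$ force us to work with the extended definition ${x\choose k}$ rather than with explicit extremal families. Shifting must be applied cautiously here, since the diversity of $\aaa$ and the "same dominant element" conclusion are not obviously preserved under arbitrary compressions — one likely needs to shift only within $[n]\setminus x$, or track how $\gamma$ changes.
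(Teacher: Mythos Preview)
Your decomposition --- split both families by the max-degree element $x$ of $\aaa$ and apply Kruskal--Katona (Corollary~\ref{corkk}) to the cross-intersecting pairs $(\aaa_0,\bb_1')$ and $(\bb_0,\aaa_1')$ --- is exactly how the paper's Lemma~\ref{lemproof1} begins. But the step ``adding up the four pieces'' hides a genuine gap. Tracing it through: writing $|\aaa_0|={n-u'-1\choose n-a-1}$ with $u'\le u$, you get $|\bb_1|\le{n-1\choose b-1}-{n-u'-1\choose b-1}$; the size hypothesis on $\bb$ then gives $|\bb_0|\ge{n-u'-1\choose b-1}-{n-u-1\choose b-1}+{n-v-1\choose n-b-1}$; writing $|\bb_0|={n-v''-1\choose n-b-1}$, Kruskal--Katona gives $|\aaa_1|\le{n-1\choose a-1}-{n-v''-1\choose a-1}$; hence $|\aaa|\le{n-u'-1\choose n-a-1}+{n-1\choose a-1}-{n-v''-1\choose a-1}$. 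To contradict the hypothesis on $|\aaa|$ you would need ${n-u'-1\choose n-a-1}-{n-u-1\choose n-a-1}<{n-v''-1\choose a-1}-{n-v-1\choose a-1}$, but what you actually control is ${n-v''-1\choose n-b-1}-{n-v-1\choose n-b-1}\ge{n-u'-1\choose b-1}-{n-u-1\choose b-1}$: four different lower indices, and a direct chase is circular (with equality everywhere you just recover the hypothesis). The paper breaks the circle by taking a \emph{weighted} linear combination --- the expression $E$ with weight ${n-4\choose a-2}/{n-4\choose b-2}$ --- and proving a dedicated monotonicity lemma (Lemma~\ref{lemkk}) for the function $f(x,m,t,s)={x\choose t-1}\cdot\frac{{m-3\choose s-2}}{{m-3\choose t-2}}-{x\choose m-s}$; this is exactly what makes the four binomial differences commensurable.

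Second, that argument (in the paper as well) only works when both diversities are not too large, specifically $\gamma(\aaa)\le{n-3\choose a-2}$ and $\gamma(\bb)\le{n-3\choose b-2}$ (equivalently $u',v'\ge 2$), since Lemma~\ref{lemkk}'s monotonicity is only established on that range. Your proposal does not address what happens when, say, $\gamma(\aaa)>{n-3\choose a-2}$; you note shifting must be applied ``cautiously'' but give no mechanism. The paper handles this with a separate and substantial reduction (Lemma~\ref{lemproof2}): it combines ordinary shifts with Daykin's generalized shifts $S_{U,V}^{lex}$, controls the drop in diversity per operation via the new ``shifted families are positively correlated'' lemma (Lemma~\ref{lemcorrshift}), and argues by minimality in lex order. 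This is not a detail that falls out of the decomposition --- it is roughly half the proof, and your sketch of shifting ``only within $[n]\setminus x$'' does not suffice, since without the Daykin shifts the process need not converge to families with diversity in the desired window.
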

The theorem above is sharp for each integer  $u\in [3,a],v\in [3,b]$ for $\mathcal A=\mathcal L_{u,v}(n,a)$, $\mathcal B=\mathcal L_{v,u}(n,b)$, where $$\mathcal L_{u,v}(n,k):=\Big\{L\in {[n]\choose k}: 1\in L, L\cap [2,v+1]\ne \emptyset\Big\}\cup \Big\{L\in {[n]\choose k}:  [2,u+1]\subset L\Big\}.$$
Putting $a=b, u=v, \aaa=\bb$, one retrieves Theorem~\ref{thmkz}.
The proof of Theorem~\ref{thmmain1} is given in Section~\ref{sec5}. Note also that the statement does not hold for $u=v=3$ with non-strict inequality on both sizes of $\aaa$ and $\bb$: the families $\aaa = \mathcal L_{2,2}(n,a)$, $\bb = \mathcal L_{2,2}(n,b)$, satisfy the inequality on the size (note that $|\mathcal L_{2,2}(n,k)| = |\mathcal L_{3,3}(n,k)|$), but have much larger diversity.

Let us deduce the following numerical corollary of the theorem above. For simplicity, we only deal with $u=v=x$ case and with $n\ge 2\max \{a,b\}$.

\begin{cor} Fix $a,b>0$ and $n\ge 2\max \{a,b\}$. If $\aaa\subset {[n]\choose a}$ and $\bb\subset {[n]\choose b}$ are cross-intersecting and, for some $n-\min\{a,b\}-1\le x\le n-4$, $|\aaa|> {n-1\choose a-1}-(1-\frac {a^2}{(n-a)^2}){x\choose a-1}$, $|\bb|> {n-1\choose b-1}-(1-\frac {b^2}{(n-b)^2}){x\choose b-1}$, then $\gamma(\aaa)< {x\choose n-a-1}$ and $\gamma(\bb)< {x\choose n-b-1}$.
\end{cor}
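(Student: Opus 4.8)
The plan is to deduce the corollary directly from Theorem~\ref{thmmain1} by substituting $u=v:=n-x-1$ and then checking that the (cruder) size hypotheses of the corollary imply the (sharp) size hypotheses of the theorem. First observe that if $\min\{a,b\}\le 2$ then the range $n-\min\{a,b\}-1\le x\le n-4$ is empty and there is nothing to prove, so assume $a,b\ge 3$. For $x$ in that range, the bounds $x\le n-4$ and $x\ge n-\min\{a,b\}-1$ are exactly equivalent to $3\le u\le a$ and $3\le v\le b$, so Theorem~\ref{thmmain1} applies once its size conditions are verified. Since $n-u-1=n-v-1=x$, the theorem's hypothesis on $|\aaa|$ reads $|\aaa|\ge {n-1\choose a-1}-{x\choose a-1}+{x\choose n-a-1}$, so, comparing with the corollary's assumption $|\aaa|>{n-1\choose a-1}-\bigl(1-\frac{a^2}{(n-a)^2}\bigr){x\choose a-1}$, it is enough to prove the purely numerical inequality
\[{x\choose n-a-1}\ \le\ \frac{a^2}{(n-a)^2}\,{x\choose a-1}\]
together with its analogue with $b$ in place of $a$. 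Granting these, the corollary's strict hypotheses give both of the theorem's size inequalities strictly (so in particular at least one is strict), whence Theorem~\ref{thmmain1} yields $\gamma(\aaa)<{n-u-1\choose n-a-1}={x\choose n-a-1}$ and $\gamma(\bb)<{n-v-1\choose n-b-1}={x\choose n-b-1}$, as claimed.

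The core of the argument, and the step I expect to be the main obstacle, is this binomial inequality. I would prove it by setting $m:=a-1$ and $M:=n-a-1$, so that $m\le M$ (as $n\ge 2a$) and $x\ge M$ (by the lower bound on $x$), and writing
\[\frac{{x\choose M}}{{x\choose m}}=\prod_{i=1}^{M-m}\frac{x-m-i+1}{m+i}.\]
Each factor is positive since $x-m-i+1\ge x-M+1\ge 1$, and from $x\le n-4=M+m-2$ each factor is at most $\frac{M-i-1}{m+i}$; the product of these upper bounds telescopes to $\frac{m(m-1)}{M(M-1)}=\frac{(a-1)(a-2)}{(n-a-1)(n-a-2)}$. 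It then remains to check $\frac{(a-1)(a-2)}{(n-a-1)(n-a-2)}\le\frac{a^2}{(n-a)^2}$, that is, $g(a)\le g(n-a)$ where $g(t):=\frac{(t-1)(t-2)}{t^2}=1-\frac{3}{t}+\frac{2}{t^2}$; since $g'(t)=\frac{3t-4}{t^3}>0$ for $t\ge 3$ and $3\le a\le n-a$, this holds. Repeating the computation with $b$ instead of $a$ completes the proof.

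The remaining points are routine bookkeeping: the chosen $u=v=n-x-1$ are real and lie in the ranges required by Theorem~\ref{thmmain1} (immediate from the constraints on $x$); the hypothesis $n\ge a+b$ of that theorem follows from $n\ge 2\max\{a,b\}$, which is also what legitimises $g(a)\le g(n-a)$ and $g(b)\le g(n-b)$; and the case $\min\{a,b\}\le 2$ is vacuous. In short, the entire content of the corollary is the elementary estimate relating the crude main-term correction $\bigl(1-\frac{a^2}{(n-a)^2}\bigr){x\choose a-1}$ to the exact correction $-{x\choose a-1}+{x\choose n-a-1}$ that appears in Theorem~\ref{thmmain1}.
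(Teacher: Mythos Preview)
Your proof is correct and follows essentially the same route as the paper: set $u=v=n-x-1$, reduce to the binomial inequality ${x\choose n-a-1}\le \frac{a^2}{(n-a)^2}{x\choose a-1}$, bound the ratio by $\frac{(a-1)(a-2)}{(n-a-1)(n-a-2)}$, and finish with the monotonicity of $g(t)=(t-1)(t-2)/t^2$. The only cosmetic difference is that the paper obtains the bound $\frac{(a-1)(a-2)}{(n-a-1)(n-a-2)}$ by observing that $p(x):={x\choose n-a-1}/{x\choose a-1}$ is increasing on $[n-a-1,n-4]$ and evaluating at $x=n-4$, whereas you bound each factor of the product directly; both reach the identical expression.
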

\begin{proof} For $n\ge 2k$ and $n-k-1\le x\le n-4$, the polynomial $p(x):={x\choose n-k-1}/{x\choose k-1}$ is increasing (and for smaller $x\ge k-1$, the ratio is $0$), and thus $p(x)\le p(n-4)={n-4\choose k-3}/{n-4\choose k-1} = \frac{(k-1)(k-2)}{(n-k-1)(n-k-2)}\le \frac {k^2}{(n-k)^2}.$ Thus, because of our assumption on $|\aaa|,\ |\bb|$, we have $|\aaa|\ge {n-1\choose a-1}-{x\choose a-1}+{x\choose n-a-1}$ and similar inequality for $\bb$. Theorem~\ref{thmmain1} then implies that the diversities satisfy the desired inequalities.
\end{proof}

Statements concerning sizes of cross-intersecting families are often directly related to the famous Kruskal--Katona theorem \cite{Kr,Ka}, which is one of the cornerstones of extremal set theory. Let us give the following definition.

\begin{defn}
  For a family $\ff$, denote by $\partial^{l}\ff$ the collection of $l$-element sets that are contained in at least one set from $\ff$. We call $\partial^l\ff$ the {\rm $l$-shadow} of $\ff$. If $\ff\subset {[n]\choose k}$ then we denote by $\partial \ff$ the {\rm immediate shadow} $\partial^{k-1}\ff$.
\end{defn} We state the Kruskal-Katona Theorem in the form proposed by Lov\'asz.
\begin{thm}\label{thmkk}
  If $\ff$ is a family of $k$-element sets, such that for some real $x\ge k$ we have $|\ff|={x\choose k}$, then $|\partial \ff|\ge {x\choose k-1}$.
\end{thm}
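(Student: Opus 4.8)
The plan is to reduce to the case of \emph{shifted} (left-compressed) families by the usual compression argument, and then to prove the shadow bound for shifted families by induction on the size of the ground set, after fixing $\ff\subseteq\binom{[n]}{k}$. For $1\le i<j\le n$ let $S_{ij}$ be the compression that replaces each $F\in\ff$ with $j\in F$, $i\notin F$ and $(F\setminus\{j\})\cup\{i\}\notin\ff$ by $(F\setminus\{j\})\cup\{i\}$. I would record the two standard facts $|S_{ij}(\ff)|=|\ff|$ and $|\partial S_{ij}(\ff)|\le|\partial\ff|$, the latter via the short inclusion $\partial\big(S_{ij}(\ff)\big)\subseteq S_{ij}(\partial\ff)$ checked by cases. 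Since $\sum_{F\in\ff}\sum_{t\in F}t$ strictly drops at each nontrivial application, iterating the $S_{ij}$ terminates at a family $\ff^{*}$ fixed by all of them, with $|\ff^{*}|=|\ff|$ and $|\partial\ff^{*}|\le|\partial\ff|$. So we may assume $\ff$ is shifted: $i\in F\in\ff$, $j<i$, $j\notin F$ imply $(F\setminus\{i\})\cup\{j\}\in\ff$.

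Now I induct on $n$ (the base case being straightforward) and split at the top element: put $\ff_0=\{F\in\ff:n\notin F\}\subseteq\binom{[n-1]}{k}$ and $\ff_1=\{F\setminus\{n\}:n\in F\in\ff\}\subseteq\binom{[n-1]}{k-1}$. Counting the $(k-1)$-sets of $\partial\ff$ according to whether they contain $n$ gives $|\partial\ff|=|\partial\ff_0\cup\ff_1|+|\partial\ff_1|$, and shiftedness yields $\ff_1\subseteq\partial\ff_0$ — for $G\in\ff_1$, applying the shift $n\mapsto\min([n-1]\setminus G)$ to $G\cup\{n\}$ produces a member of $\ff_0$ containing $G$ — whence $|\partial\ff|=|\partial\ff_0|+|\partial\ff_1|$. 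Shiftedness in fact gives more: applying that shift with every $j\in[n-1]\setminus G$ shows that $\ff_0$ contains the whole upper shadow $N^{+}(\ff_1)=\{A\in\binom{[n-1]}{k}:A\supseteq G\text{ for some }G\in\ff_1\}$; a double count of the pairs $(G,A)$ with $G\in\ff_1$, $G\subset A\in\binom{[n-1]}{k}$ gives $|N^{+}(\ff_1)|\ge\frac{n-k}{k}|\ff_1|$, so $|\ff_0|\ge\frac{n-k}{k}|\ff_1|$. Writing $|\ff_0|=\binom{y}{k}$ and $|\ff_1|=\binom{z}{k-1}$ with $y,z\ge k-1$, and using $z\le n-1$ (from $|\ff_1|\le\binom{n-1}{k-1}$), this rearranges to $\binom{z}{k}\le\binom{y}{k}$, i.e. $z\le y$. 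Applying the inductive hypothesis to $\ff_0$ and to $\ff_1$ then gives $|\partial\ff_0|\ge\binom{y}{k-1}$ and $|\partial\ff_1|\ge\binom{z}{k-2}$, so the theorem reduces to the numerical statement: if $k-1\le z\le y$ and $\binom{y}{k}+\binom{z}{k-1}=\binom{x}{k}$, then $\binom{y}{k-1}+\binom{z}{k-2}\ge\binom{x}{k-1}$.

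This last inequality is where the work lies, and it is the step I expect to be the main obstacle (it is also exactly where the real-variable version is genuinely more delicate than the integer/cascade version). When $z=y$ it is an equality: Pascal's identity $\binom{w}{k}+\binom{w}{k-1}=\binom{w+1}{k}$, which holds for real $w$, forces $x=y+1$, and the same identity at level $k-1$ gives $\binom{y}{k-1}+\binom{y}{k-2}=\binom{y+1}{k-1}$. For $z<y$ I would fix $y$ and show that $\binom{y}{k-1}+\binom{z}{k-2}-\binom{x(z)}{k-1}$ is nonincreasing in $z\in[k-1,y]$, hence at least its value $0$ at the right endpoint $z=y$; this comes down to a monotonicity estimate for the ratio $\binom{t}{k-1}'/\binom{t}{k}'$, conveniently handled through $\binom{t}{k}'=\binom{t}{k}\sum_{i=0}^{k-1}(t-i)^{-1}$. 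Alternatively, one can run the same compression-and-split argument to prove the cascade form of Kruskal--Katona, where this step becomes a discrete identity on cascade representations, and then deduce Theorem~\ref{thmkk} from it; the convexity content is essentially the same either way. Everything else — the two properties of $S_{ij}$, the shadow count after the split, the double count, and the base cases — is routine.
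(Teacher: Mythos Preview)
The paper does not prove Theorem~\ref{thmkk}; it is quoted as the classical Lov\'asz form of the Kruskal--Katona theorem and used as a black box. So there is no ``paper's own proof'' to compare against, and your attempt must stand on its own.

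Your overall architecture is the standard one (compression to a shifted family, split at the top element, induction on $n$ and $k$), and the combinatorial steps --- $|\partial S_{ij}(\ff)|\le|\partial\ff|$, $\ff_1\subseteq\partial\ff_0$, the double count giving $|\ff_0|\ge\frac{n-k}{k}|\ff_1|$, and hence $z\le y$ --- are all fine. The gap is exactly where you flagged ``the main obstacle'': your proposed proof of the numerical inequality is wrong. The function
\[
\phi(z)=\binom{y}{k-1}+\binom{z}{k-2}-\binom{x(z)}{k-1},\qquad \binom{x(z)}{k}=\binom{y}{k}+\binom{z}{k-1},
\]
is \emph{not} nonincreasing on $[k-1,y]$. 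For instance, with $k=3$ and $y=10$ one finds $\phi(2)\approx 1.76$, $\phi(5)\approx 2.69$, $\phi(8)\approx 1.6$, $\phi(10)=0$: $\phi$ rises and then falls. In terms of your derivative test, $\phi'(z)\le 0$ is equivalent to $R_{k-1}(z)\le R_k(x(z))$, where $R_m(t)=\binom{t}{m-1}'/\binom{t}{m}'=\frac{m}{(t-m+1)+1/S_{m-1}(t)}$ with $S_{m-1}(t)=\sum_{i=0}^{m-2}(t-i)^{-1}$; at $z=k-1$ with $y$ large one has $R_{k-1}(k-1)=(k-1)(1-1/H_{k-1})$ while $R_k(x)\approx (k-1)/y$, so $\phi'(k-1)>0$. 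Thus the monotonicity route does not work, and the ``alternative'' you mention --- deducing Lov\'asz from the cascade form --- lands on the identical two-term inequality, so it does not bypass the issue.

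The inequality itself is true (it is precisely the analytic content of Lov\'asz's theorem), but it needs a more careful argument than a single monotonicity. One clean way is the classical dichotomy: if $|\ff_1|\ge\binom{x-1}{k-1}$ then $|\partial\ff_0|\ge|\ff_1|\ge\binom{x-1}{k-1}$ and, by induction on $k$, $|\partial\ff_1|\ge\binom{x-1}{k-2}$, which already sums to $\binom{x}{k-1}$; only the complementary case $z<x-1<y$ requires the numerical step, and there one can, for example, show that $\phi$ is unimodal on $[k-1,y]$ (both $R_{k-1}(z)$ and $R_k(x(z))$ are decreasing, and one checks they cross exactly once), so that $\phi(y)=0$ together with $\phi'(y)<0$ forces $\phi\ge 0$ throughout. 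The paper's own Lemma~\ref{lemkk} carries out a closely related (and similarly delicate) monotonicity analysis for a cognate function, which gives a good template for how much care this step actually needs.
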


The other main result of this paper is a strengthening of Theorem~\ref{thmmain1} in the Kruskal--Katona-type form.  For a family $\ff$ and integer $n$, define the {\it Kruskal-Katona diversity} $\gamma_{KK}(\ff,n):=\min_{X, |X|=n}\big|\ff\setminus {X\choose k}\big|$.  In Section~\ref{sec42}, we will prove the following theorem.
\begin{thm}\label{thmstabkk}
  Let $n>k>0$ and assume that $\ff$ is a family of $k$-element sets, such that $|\ff|\ge {n\choose k}-{y\choose n-k}+{x\choose k-1}$ and $\gamma_{KK}(\ff,n)\ge {x\choose k-1}$ for some real numbers $k-1\le x\le n-3$, $n-k\le y\le n-3$. Then $\partial(\ff) \ge {n\choose k-1}-{y\choose n-k+1}+{x\choose k-2}$.
\end{thm}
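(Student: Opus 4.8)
My plan is to bound $|\partial\ff|$ below by peeling $\ff$ one element at a time and comparing the outcome to the family $\bigl(\binom{[n]}k\setminus\{F:F\supseteq W\}\bigr)\cup\mathcal C$ that should be extremal, where $W$ is a fixed $(n-k)$-set and $\mathcal C$ is an initial segment of the colex order on a disjoint ground set; the three summands of the target bound should come, respectively, from the part of $\ff$ fitting into an $n$-set, its shadow, and the shadow of the links of the peeled elements.

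I first record two ingredients. (a) A complemented form of Theorem~\ref{thmkk}, to be used once the current family fits into an $n$-set: if $\G$ is a family of $k$-sets with $\bigcup_{G\in\G}G\subseteq X$, $|X|=n$, and $|\G|\ge\binom nk-\binom z{n-k}$, then $|\partial\G|\ge\binom n{k-1}-\binom z{n-k+1}$. Passing to complements in $X$ turns this into a statement about $\widehat\G:=\{X\setminus G:G\in\binom Xk\setminus\G\}\subseteq\binom X{n-k}$, of size $\le\binom z{n-k}$: a $(k-1)$-set $G\subseteq X$ is missing from $\partial\G$ exactly when all $(n-k)$-subsets of $X\setminus G$ lie in $\widehat\G$, so the number of such $G$ equals the number of $(n-k+1)$-subsets of $X$ whose immediate shadow is contained in $\widehat\G$, and Theorem~\ref{thmkk} applied to that collection of $(n-k+1)$-sets bounds it by $\binom z{n-k+1}$. (b) The Kruskal--Katona lower bound $g(m)$ for the immediate shadow of $m$ sets of a fixed uniformity is nondecreasing and subadditive (it is concave with $g(0)=0$).

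The engine is the identity, valid for any element $i$ with link $\ff(i)$ (a family of $(k-1)$-sets) and $\ff(\bar i):=\{F\in\ff:i\notin F\}$:
\[|\partial\ff|=\bigl|\partial^{k-2}\ff(i)\bigr|+\bigl|\ff(i)\cup\partial\ff(\bar i)\bigr|.\]
One iterates this. If at some stage $\Delta(\cdot)$ is large, one keeps $|\ff(i)\cup\partial\ff(\bar i)|\ge|\ff(i)|=\Delta$ for a maximum-degree $i$ and is essentially done by $|\partial\ff|\ge g(\Delta)+\Delta$ together with the fact that $\Delta$ being large forces $|\ff|$, hence $\Delta$ itself, to be large; otherwise one strips a (say maximum-degree) element, accumulating $\bigl|\partial^{k-2}\ff(i)\bigr|$ and recursing on $\ff(\bar i)$, until the remaining family $\ff^{\mathrm{fin}}$ fits into an $n$-set, where (a) applies. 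Since stripping $i$ lowers $\gamma_{KK}(\cdot,n)$ by at most $\deg(i)$ and $\gamma_{KK}(\ff^{\mathrm{fin}},n)=0$, the total stripped mass $P$ satisfies $P\ge\gamma_{KK}(\ff,n)\ge\binom x{k-1}$; writing $\binom p{k-1}:=P$ and $\binom z{n-k}:=\binom nk-|\ff^{\mathrm{fin}}|$, the size hypothesis becomes $\binom p{k-1}-\binom z{n-k}\ge\binom x{k-1}-\binom y{n-k}$, and (a) together with subadditivity of $g$ from (b) gives an accumulated estimate of at least $\bigl(\binom n{k-1}-\binom z{n-k+1}\bigr)+\binom p{k-2}$.

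It then remains to verify the two-parameter inequality
\[\binom p{k-2}-\binom z{n-k+1}\;\ge\;\binom x{k-2}-\binom y{n-k+1}\]
under the constraints $p\ge x\ge k-1$, $z\le n$, and $\binom p{k-1}-\binom z{n-k}\ge\binom x{k-1}-\binom y{n-k}$. When $z\le y$ this is immediate from monotonicity of $t\mapsto\binom t{k-2}$ and $t\mapsto\binom t{n-k+1}$; the remaining range is where the hypotheses $x\le n-3$ and $y\le n-3$ are used decisively, and this is where I expect the main obstacle to lie. Concretely, the crude bound $\binom p{k-2}$ for the accumulated link contributions is tight only when a single high-degree element is peeled, whereas in the ``spread'' case one peels many low-degree elements, each contributing about $k-1$ to the shadow, so the true accumulated contribution far exceeds $\binom p{k-2}$; and in the ``concentrated'' case one must retain the large term $|\ff(i)|$ in the displayed identity rather than discarding it. Organising the peeling so that the correct one of these two gains is available at every step, and then pushing through the resulting binomial-coefficient estimate, is the heart of the proof.
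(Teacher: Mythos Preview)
Your ingredients (a) and (b) are correct, as is the link identity $|\partial\ff|=|\partial^{k-2}\ff(i)|+|\ff(i)\cup\partial\ff(\bar i)|$. However, the proposal has a genuine gap precisely where you flag it: the two-parameter inequality
\[
\binom{p}{k-2}-\binom{z}{n-k+1}\ \ge\ \binom{x}{k-2}-\binom{y}{n-k+1}
\]
is \emph{false} under the constraints you list. For a concrete failure take $k=3$, $x=y=n-3$, and $z=n$ (i.e.\ $\ff^{\mathrm{fin}}=\emptyset$): the size constraint forces $\binom{p}{2}\approx\binom{n}{3}$, so $p\asymp n^{3/2}$, while $\binom{z}{n-2}=\binom{n}{2}\asymp n^2\gg p$, and the inequality goes the wrong way. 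More generally, whenever peeling leaves $\ff^{\mathrm{fin}}$ small (so $z$ is close to $n$), the subadditive bound $\sum_j g(|\ff_j(i_j)|)\ge g(P)=\binom{p}{k-2}$ is far too weak to compensate the loss $\binom{z}{n-k+1}-\binom{y}{n-k+1}$. You recognise that one must sometimes retain $|\ff(i)|$ rather than $|\partial\ff(\bar i)|$, and that the spread case gives much more than $g(P)$; but you do not specify a peeling rule that guarantees the right gain is available at each step, nor how to turn these heuristics into the needed estimate. That is not a detail---it is the whole difficulty.

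The paper takes a different route that sidesteps this problem. Instead of peeling (which shrinks the family and forces one to track the accumulated shadow via a lossy subadditive bound), the paper uses ordinary shifts and Daykin colex-shifts to move $\ff$, \emph{without changing its size}, into $\binom{[n+1]}{k}$, while showing that each shift decreases $\gamma_{KK}(\cdot,n)$ by a controlled amount. The crucial device here is the positive-correlation lemma for shifted families (Lemma~\ref{lemcorrshift}), which bounds $|\ff(i\bar j)\cap\ff(\bar ij)|$ from below and hence rules out ``unsuccessful'' shifts that would drop the diversity too far. Once $\ff\subset\binom{[n+1]}{k}$, a single complementation turns the statement into an instance of Theorem~\ref{thmmain1} on cross-intersecting pairs; the delicate binomial comparison that replaces your two-parameter inequality is the monotonicity of the function $f(x,m,t,s)=\binom{x}{t-1}\frac{\binom{m-3}{s-2}}{\binom{m-3}{t-2}}-\binom{x}{m-s}$ on $[m-s,m-3]$ (Lemma~\ref{lemkk}). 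That lemma is the analytic heart of the argument, and its range of validity is exactly what makes the hypotheses $x,y\le n-3$ necessary. Your peeling framework produces a superficially similar inequality, but with parameters $(p,z)$ that can escape the range where any such monotonicity holds.
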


Again, the theorem is sharp for integer $y,x$, as is witnessed by the example
$$\mathcal{KK}_{x,y}:={[n]\choose k}\setminus \big\{S\subset [n]: [y+1,n]\subset S\big\}\cup \Big\{S\cup \{n+1\}: S\in {[x]\choose k-1}\Big\}.$$
Moreover, one sees that $|\mathcal {KK}_{3,3}|=|\mathcal {KK}_{2,2}|$, and $\mathcal {KK}_{2,2}$ has the same ``distance'' from three different subsets of $[n+1]:$ $[n+1]\setminus \{i\}$, where $i\in [n-1,n+1]$. This means that the range of $x$, $y$ in the theorem is in some sense best possible.
From this theorem, we can derive the following numerical corollary.

\begin{cor}\label{corstabkk} If $|\ff| = {n\choose k}$ and $\gamma_{KK}(\ff,n)\ge t:={x\choose k-1}$ for some $t\le\{{n-3\choose k-3},{n-3\choose k-1}\},$ then $|\partial \ff|\ge {n\choose k-1}+\max\{\frac {1}{n-k+1},\frac 1{k-1}\} {x\choose k-2}$.
\end{cor}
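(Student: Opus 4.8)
Proof proposal for Corollary~\ref{corstabkk}.

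The plan is to obtain this directly from Theorem~\ref{thmstabkk} by choosing its parameters appropriately and then simplifying. First I would handle the degenerate case $t=0$: since ${z\choose k-1}$ never lies in $(0,1)$, the hypothesis $t<1$ forces $t=0$, and then $\gamma_{KK}$ carries no information while Theorem~\ref{thmkk} applied to $\ff$ (with $|\ff|={n\choose k}$) already gives $|\partial\ff|\ge{n\choose k-1}$, which is the claim. So I may assume $t\ge1$; then $t\le\{{n-3\choose k-3},{n-3\choose k-1}\}$ requires $k\ge3$ and $n\ge k+2$, and $t={x\choose k-1}\ge1$ requires $x\ge k-1$.

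Next I would apply Theorem~\ref{thmstabkk} with its ``$x$'' being the given $x$ and its ``$y$'' being the unique real $y_0\ge n-k$ with ${y_0\choose n-k}=t$ (which exists and is unique, as $z\mapsto{z\choose n-k}$ is continuous and strictly increasing on $[n-k,\infty)$ with value $1$ at the endpoint). This is exactly where the two upper bounds on $t$ enter: by monotonicity, $t={x\choose k-1}\le{n-3\choose k-1}$ gives $x\le n-3$, and $t={y_0\choose n-k}\le{n-3\choose k-3}={n-3\choose n-k}$ gives $y_0\le n-3$, so $x$ and $y_0$ lie in the ranges required by the theorem. Since ${y_0\choose n-k}=t={x\choose k-1}$, the size hypothesis $|\ff|\ge{n\choose k}-{y_0\choose n-k}+{x\choose k-1}$ holds with equality and the diversity hypothesis is given, so Theorem~\ref{thmstabkk} yields
$$|\partial\ff|\ \ge\ {n\choose k-1}-{y_0\choose n-k+1}+{x\choose k-2}.$$

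It remains to check $(1-c){x\choose k-2}\ge{y_0\choose n-k+1}$ with $c=\max\{\frac1{n-k+1},\frac1{k-1}\}$, which is the stated inequality. Here I would use the identities ${x\choose k-2}={x\choose k-1}\cdot\frac{k-1}{x-k+2}=t\cdot\frac{k-1}{x-k+2}$ and ${y_0\choose n-k+1}\le{y_0\choose n-k}\cdot\frac{y_0-n+k}{n-k+1}=t\cdot\frac{y_0-n+k}{n-k+1}$ (an inequality, rather than an equality, precisely when $y_0<n-k+1$, because of the convention ${z\choose m}:=0$ for $z<m$); after clearing the positive factor $t\big/\bigl((x-k+2)(n-k+1)\bigr)$, the target becomes $(1-c)(k-1)(n-k+1)\ge(y_0-n+k)(x-k+2)$. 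Since $1-c=\min\{\frac{n-k}{n-k+1},\frac{k-2}{k-1}\}$, the left-hand side equals $\min\{(n-k)(k-1),(k-2)(n-k+1)\}$, while $x\le n-3$ and $y_0\le n-3$ bound the right-hand side by $(k-3)(n-k-1)$, which for $k\ge3$ and $n\ge k+2$ is termwise at most each of $(n-k)(k-1)$ and $(k-2)(n-k+1)$. This finishes the argument, which is entirely routine once Theorem~\ref{thmstabkk} is in hand; the only points needing a little care are the selection of $y_0$ and confirming it lies in $[n-k,n-3]$ (which is where both upper bounds on $t$ are used), together with the harmless equality-versus-inequality subtlety in the formula for ${y_0\choose n-k+1}$ when $y_0$ is just above $n-k$.
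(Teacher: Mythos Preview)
Your proposal is correct and follows essentially the same route as the paper: choose $y$ so that ${y\choose n-k}={x\choose k-1}$, apply Theorem~\ref{thmstabkk}, and then bound ${y\choose n-k+1}$ against ${x\choose k-2}$ via the identities ${y\choose n-k+1}=\frac{y-n+k}{n-k+1}{y\choose n-k}$ and ${x\choose k-1}=\frac{x-k+2}{k-1}{x\choose k-2}$. The paper uses the slightly looser bounds $x,y\le n-2$ to get the product $\frac{(k-2)(n-k)}{(k-1)(n-k+1)}$ (which, being a product of $1-\frac1{k-1}$ and $1-\frac1{n-k+1}$, is at most $1-\max\{\frac1{k-1},\frac1{n-k+1}\}$), whereas you use $x,y_0\le n-3$ and compare $(k-3)(n-k-1)$ to $\min\{(n-k)(k-1),(k-2)(n-k+1)\}$; both work. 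Your explicit treatment of the boundary case $t=0$ and of the convention ${y_0\choose n-k+1}=0$ when $y_0<n-k+1$ is a welcome bit of extra care.
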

\begin{proof}
Find $y\le n-3$ such that $|\ff|={n\choose k}-{y\choose n-k}+{x\choose k-1}$. It is easy to see that $n-k\le y\le n-2$ and $k-1\le x\le n-2$. Then, by Theorem~\ref{thmstabkk}, $|\partial \ff|\ge {n\choose k-1}-{y\choose n-k+1}+{x\choose k-2}\ge \min\{\frac 1{n-k+1},\frac 1{k-1}\}{x\choose k-2}$ since ${y\choose n-k+1} = \frac{y-n+k}{n-k+1}{y\choose n-k} = \frac {y-n+k}{n-k+1}{x\choose k-1} = \frac {(y-n+k)(x-(k-2))}{(n-k+1)(k-1)}{x\choose k-2}\le \frac {(k-2)(n-k)}{(n-k+1)(k-1)}{x\choose k-2}$.
\end{proof}

For two sets $A,B$, let $A\oplus B$ stand for the symmetric difference of $A$ and $B$. While we were preparing this paper, Keevash and Long \cite{KLo} published the following theorem.

\begin{thm}[\cite{KLo}]\label{thmklo} If $\delta_0\in(0,1)$ and $\aaa\subset {[n]\choose k}$ with $|\aaa| = {x\choose k}$ and $\partial A\le (1+\frac cx){x\choose k-1}$, with $c:=10^{-9}\delta_0$, then $|\aaa\oplus {S\choose k}|\le\delta_0{|S|-1\choose k-1}$ for some $S\subset [n]$.
\end{thm}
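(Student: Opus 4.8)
\medskip

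The plan is to show that $\aaa$ is close to a complete family $\binom Sk$ with $|S|=m$, where $m$ is the integer nearest to $x$; the argument has three parts — locating the ground set, estimating how much of $\aaa$ sits outside $\binom Sk$, and a routine finish — and the second part is where essentially all the difficulty lies.

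First I would pin down $m$ via a robust form of the Kruskal--Katona theorem (Theorem~\ref{thmkk}), used in its cascade form. Writing $|\aaa|=\binom{m-1}{k}+\binom{x'}{k-1}$ for the integer $m$ with $x\in(m-1,m]$, Kruskal--Katona gives $|\partial\aaa|\ge\binom{m-1}{k-1}+\binom{x'}{k-2}$, and the right-hand side exceeds $\binom{x}{k-1}$ by a nonnegative quantity $\sigma(x)$ that vanishes exactly when $x$ is an integer; an explicit computation shows $\sigma(x)\gtrsim\theta^{(k-2)/(k-1)}\binom{x-1}{k-2}$ when the fractional part $\theta$ of $x$ is near $0$ (and symmetrically near $1$), so $\sigma(x)$ is a fixed multiple of $\binom{x-1}{k-2}$ once $\theta$ is bounded away from $\{0,1\}$. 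Since the hypothesis reads $|\partial\aaa|\le\binom{x}{k-1}+\tfrac{c}{k-1}\binom{x-1}{k-2}$, this forces $x$ to lie within $O(c)$ of $m$, and hence $\big|\,|\aaa|-\binom mk\,\big|\le O(c)\binom{m-1}{k-1}$.

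Next, fix $S$ with $|S|=m$ minimising $q:=\big|\aaa\setminus\binom Sk\big|=\gamma_{KK}(\aaa,m)$ and write $\aaa=\aaa^{\mathrm{in}}\sqcup\aaa^{\mathrm{out}}$ with $\aaa^{\mathrm{in}}=\aaa\cap\binom Sk$. Since $\partial\aaa$ contains $\partial\aaa^{\mathrm{in}}$ together with all $(k-1)$-sets of $\partial\aaa^{\mathrm{out}}$ not contained in $S$, and these two parts are disjoint (the first lies in $\binom S{k-1}$, the second does not), one gets $|\partial\aaa|\ge|\partial\aaa^{\mathrm{in}}|+\big|\partial\aaa^{\mathrm{out}}\setminus\binom S{k-1}\big|$; grouping the members of $\aaa^{\mathrm{out}}$ by the elements they use outside $S$ and applying Kruskal--Katona to the links gives $\big|\partial\aaa^{\mathrm{out}}\setminus\binom S{k-1}\big|\ge\binom z{k-2}$ with $\binom z{k-1}=q$, while a double-counting argument shows $\partial\aaa^{\mathrm{in}}$ omits at most $\tfrac{k}{m-k+1}\big(q+O(c)\binom{m-1}{k-1}\big)$ sets of $\binom S{k-1}$. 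When $q$ is small these two estimates combined with the hypothesis already force $q=O(c)\binom{m-1}{k-1}$. The genuinely hard case is when $q$ is not small, i.e.\ $\aaa$ is far from \emph{every} complete $m$-family: there one needs the full \emph{stability} of the Kruskal--Katona theorem — a $k$-uniform family whose shadow is within a $(1+c/x)$-factor of the minimum must, after relabelling the ground set, be a near-complete family — which (for general $k$, with the explicit constant) is precisely the content of \cite{KLo}; for $k=3$ this is the stability of the Moon--Moser/Kruskal--Katona bound for triangles and admits a short direct proof, and in the complementary range where $k$ is close to $m$ one may instead invoke Theorem~\ref{thmstabkk} and Corollary~\ref{corstabkk}, since there the removable ``slice'' has size comparable to $\binom{m-1}{k-1}$. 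Establishing this dichotomy — that being far from every clique forces a non-negligible shadow excess, with the numerical constant $10^{-9}$ to spare — is the main obstacle.

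Finally one assembles the pieces: $|\aaa\oplus\binom Sk|=q+\big(\binom mk-(|\aaa|-q)\big)=2q+\big(\binom mk-|\aaa|\big)=O(c)\binom{m-1}{k-1}$, which is at most $\delta_0\binom{|S|-1}{k-1}$ because $c=10^{-9}\delta_0$ and all the implied constants are absolute. As an alternative to handling the hard case head-on, one may first replace $\aaa$ by a left-compressed family with the same size and no larger shadow — changing $\aaa$ in only $O(c)\binom{m-1}{k-1}$ sets, by the quantitative fact that a compression strictly shrinks the shadow in proportion to the number of sets it moves unless the family is already compressed — and then carry out Step~2 in the transparent setting of left-compressed families.
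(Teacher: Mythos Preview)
This theorem is not proved in the paper at all: it is quoted from Keevash--Long \cite{KLo} as an external result, and the surrounding text only compares its range and strength to the paper's own Theorem~\ref{thmstabkk}. So there is no ``paper's own proof'' to match against.

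That said, your proposal has a genuine circularity. In Step~2 you correctly isolate the hard case --- $q=\gamma_{KK}(\aaa,m)$ is not small, i.e.\ $\aaa$ is far from every $\binom Sk$ --- and then write that handling it ``is precisely the content of \cite{KLo}''. But \cite{KLo} \emph{is} Theorem~\ref{thmklo}; you are invoking the theorem to prove itself. The partial substitutes you offer do not close the gap: the $k=3$ case is indeed easier, and Theorem~\ref{thmstabkk}/Corollary~\ref{corstabkk} do cover the regime where $k$ is close to $m$, but the paper explicitly notes that its Theorem~\ref{thmstabkk} does \emph{not} imply Theorem~\ref{thmklo} in general (for $n$ large with respect to $k$ the ranges differ), so these pieces do not assemble into a full proof.

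Your alternative at the end has the same defect in disguise. The claim that ``a compression strictly shrinks the shadow in proportion to the number of sets it moves'' is exactly a quantitative shadow-stability statement; it is not a standard fact and is essentially equivalent in strength to what you are trying to prove. Ordinary (or Daykin) compressions satisfy $|\partial S_{U,V}(\ff)|\le|\partial\ff|$, but there is no off-the-shelf lower bound of the form $|\partial\ff|-|\partial S_{U,V}(\ff)|\ge c'\cdot(\text{number of sets moved})$ --- establishing such a bound is the whole point of the Keevash--Long analysis.
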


Theorems~\ref{thmstabkk} and Theorem~\ref{thmklo} deal with stability in similar scenarios, as one sees in Corollary~\ref{corstabkk}, however, none implies the other. Where applicable, Theorem~\ref{thmstabkk} gives better (sharp) bounds, and its range is at least big for $k$ comparable to $n$ and especially for $k$ that are close to $n$. However, for $n$ large w.r.t. $k$, it works for the families that are closer to ${S\choose k}$ for some set $S$ than Theorem~\ref{thmklo}.

One advantage of Theorem~\ref{thmstabkk} is that it has a much less technical proof (especially in terms of computations) than that of Theorem~\ref{thmklo}. We could have made our conclusions work in a wider range, but then it would have made the statements and the proofs uglier, which, in view of Theorem~\ref{thmklo}, we decided to avoid. The two proofs share one important ingredient: to deal with families with very large diversity, we continuously transform the family into one that has smaller, but not too small, diversity, using combinatorial operations, in particular, the so-called {\it Daykin shifts} (cf. Section~\ref{sec3}). A similar approach appeared earlier in the paper by Zakharov and the second author \cite{KZ} to obtain Theorem~\ref{thmkz}, and our analysis resembles that of \cite{KZ}. However, in \cite{KZ}, (normal) shifts were used, together with some other exchange operation.
For other stability results on Kruskal--Katona theorem, cf. \cite{Kee, ODW}.

Let $r\ge 2,t\ge 1$ be integers. We say that a family $\ff\subset 2^{[n]}$ is {\it $r$-wise $t$-intersecting} if $|F_1\cap \ldots F_r|\ge t$ for any $F_1,\ldots, F_r\in \ff$. If $r=2$, then we sometimes omit ``$2$-wise'' for brevity. Similarly, we shorten ``$r$-wise $1$-intersecting'' to  ``$r$-wise intersecting''.

\begin{thm}\label{thmmain2}
  Fix integers $r\ge 3$ and $t\ge 1$. Then for $n>\max\{15,2(r+t)\}k$ and an $r$-wise $t$-intersecting family $\ff\subset {[n]\choose k}$ we have \begin{equation}\label{eqrwise}\gamma(\ff)\le {n-r-t\choose k-r-t+1}.\end{equation}
\end{thm}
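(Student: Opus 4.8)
The plan is to reduce the estimate to a diversity bound for ordinary ($2$-wise) $M$-intersecting families, with $M:=t+r-2$, for which the hypothesis $n>\max\{15,2(r+t)\}k$ is exactly what is needed. The heart of the matter is the following range-free, purely combinatorial claim, which I would establish first: \emph{if $\ff\subset\binom{[n]}{k}$ is $r$-wise $t$-intersecting and $\gamma(\ff)\ge 1$, then $\ff$ is $M$-intersecting, where $M=t+r-2$.} Granting it, the theorem is immediate: if $\gamma(\ff)=0$ the right-hand side $\binom{n-r-t}{k-r-t+1}$ is nonnegative and we are done; otherwise $\ff$ is $M$-intersecting, so by the sharp diversity bound for $M$-intersecting $k$-uniform families, namely $\gamma\le\binom{n-M-2}{k-M-1}$ (valid since $n$ is a large enough multiple of $k$), we get $\gamma(\ff)\le\binom{n-(t+r-2)-2}{k-(t+r-2)-1}=\binom{n-r-t}{k-r-t+1}$. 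For $M=1$ this bound is the classical fact that an intersecting $k$-uniform family on $[n]$ with $n>2k$ has diversity at most $\binom{n-3}{k-2}$ (attained by $\{F:|F\cap[3]|\ge 2\}$); for $M\ge 2$ it is the analogous statement for $M$-intersecting families, attained by $\{F:|F\cap[M+2]|\ge M+1\}$, which holds once $n$ exceeds a suitable absolute constant times $k$, via the stability theory of the complete $t$-intersection theorem (a fixed $t$-set lies in almost all members of a near-extremal $t$-intersecting family when $n$ is large). Since $\binom{n-M-2}{k-M-1}$ is decreasing in $M$, it suffices to know that $\ff$ is $(t+r-2)$-intersecting, even if its true intersection number is larger.

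To prove the claim I would induct on $r$, the case $r=2$ being vacuous. Let $r\ge 3$ and assume $\gamma(\ff)\ge 1$. Taking $F_r=F_{r-1}$ shows that $\ff$ is also $(r-1)$-wise $t$-intersecting, so I may choose $F_1,\dots,F_{r-1}\in\ff$ with $Y:=F_1\cap\dots\cap F_{r-1}$ of minimum size $\mu\ge t$. Applying the $r$-wise $t$-intersecting condition to $F,F_1,\dots,F_{r-1}$ gives $|F\cap Y|\ge t$ for every $F\in\ff$. If $\mu=t$, then $|F\cap Y|\ge t=|Y|$ forces $Y\subseteq F$ for all $F\in\ff$; picking $i\in Y$ we get $\gamma(\ff)=0$, a contradiction, so $\mu\ge t+1$. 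By the minimality of $\mu$ the family $\ff$ is $(r-1)$-wise $\mu$-intersecting, and $\gamma(\ff)\ge 1$ still holds, so the inductive hypothesis (with $r-1$ and $\mu$ in place of $r$ and $t$) yields that $\ff$ is $(\mu+(r-1)-2)$-intersecting, hence $(t+r-2)$-intersecting because $\mu\ge t+1$. This proves the claim.

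The reduction itself uses no arithmetic condition on $n$; the sole role of $n>\max\{15,2(r+t)\}k$ is to guarantee the $M$-intersecting diversity bound with $M=t+r-2$ — here $2(r+t)=2M+4$ comfortably beats the $t$-intersection threshold $(M+1)(k-M+1)\le (M+1)k$, while the constant $15$ handles the small cases where $2(r+t)$ is itself small. Thus the only genuinely technical ingredient is that diversity bound for $M$-intersecting families in a linear range of $n$: the case $M=1$ is classical and sharp, and for $M\ge 2$ it follows from the structure of near-extremal $t$-intersecting families; this is the step I expect to be the main obstacle. It is reassuring that the extremal configuration $\{F:|F\cap[M+2]|\ge M+1\}$ for the $M$-intersecting problem becomes, with $M+2=r+t$, the family $\{F:|F\cap[r+t]|\ge r+t-1\}$, which is $r$-wise $t$-intersecting and has diversity exactly $\binom{n-r-t}{k-r-t+1}$, so the stated bound is tight and the argument is internally consistent.
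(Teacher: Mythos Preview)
Your reduction has a genuine gap in the second step. The claim that $\gamma(\ff)\ge 1$ forces $\ff$ to be $(t+r-2)$-intersecting is correct, and your inductive proof of it is fine (the paper gives a short direct argument to the same effect). The problem is what you do next: you invoke the bound $\gamma(\ff)\le\binom{n-M-2}{k-M-1}$ for an arbitrary $M$-intersecting $k$-uniform family with $M\ge 2$ in a linear range $n>Ck$, justifying it only by a wave toward ``stability theory of the complete $t$-intersection theorem.'' That bound is \emph{not} an established theorem for general (non-shifted) families; the paper itself singles it out as open, remarking immediately after stating Theorem~\ref{thmmain2} that the $r=2$, $t\ge 2$ case ``seems \ldots\ possible to resolve by generalizing techniques'' from elsewhere, and proving it only for shifted families (Corollary~\ref{cor13}). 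Stability for the complete intersection theorem tells you that a \emph{near-maximum} $M$-intersecting family is close to a full star over some $M$-set; it does not yield the sharp diversity estimate $\binom{n-M-2}{k-M-1}$ for an arbitrary $M$-intersecting family, and certainly not with the explicit linear threshold you claim. Your plan therefore reduces the theorem to a statement at least as hard as the theorem itself.

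The paper avoids this precisely by \emph{not} discarding the $r$-wise hypothesis after the first step. It splits into two cases. If $\ff$ happens to be $(r+t-1)$-intersecting, then so is every $\ff(\bar x)$, and the \emph{size} bound from the complete Erd\H os--Ko--Rado theorem (Frankl/Wilson) already gives $|\ff(\bar x)|\le\binom{n-r-t}{k-r-t+1}$ for $n>2(r+t)k$. Otherwise there exist $F_1,F_2\in\ff$ with $F_1\cap F_2=[r+t-2]$; now the $r$-wise $t$-intersecting property (applied to $F_1,F_2$ and $r-2$ further sets) forces $|F\cap[r+t-2]|\ge r+t-3$ for every $F\in\ff$, and the subfamilies $\ff_i:=\{F\in\ff:F\cap[r+t-2]=[r+t-2]\setminus\{i\}\}$ become cross $2$-intersecting in $\binom{[r+t-1,n]}{k-r-t+3}$. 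The Frankl--Lee--Siggers--Tokushige product bound for cross $t$-intersecting pairs (this is where the constant $15$ enters, via Corollary~\ref{corflst}) then gives $\min_i|\ff_i|\le\binom{n-r-t}{k-r-t+1}$, which bounds $\gamma(\ff)$. The point is that $r\ge 3$ buys a structural dichotomy allowing one to invoke only established \emph{size} and \emph{product} bounds, rather than the open diversity bound you are assuming.
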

The theorem is tight, as witnessed by the example $\{F\in{[n]\choose k}: F\cap[r+t]\ge r+t-1\}$.
We give the proof of Theorem~\ref{thmmain2} in Section~\ref{sec4}. We note the case $r=2$ seems to be the hardest. For $r=2,t=1$, this corresponds to the ``diversity of intersecting families'' question of Katona, discussed in the beginning of the introduction, and thus the same statement is true for $n>Ck$ with some absolute constant $C$. For $r=2$ and $t\ge 2$, it seems that it is possible to resolve by generalizing techniques of the second author from \cite{Kup21}. The case of shifted families is easy, see Corollary~\ref{cor13}.

For a family $\ff\subset 2^{[n]}$, let the {\it matching number} $\nu(\ff)$ stand for the largest $s$, such that there are $F_1,\ldots, F_s\in \ff$, satisfying $F_i\cap F_j=\emptyset $ for $1\le i<j\le s$. In particular, $\nu(\ff)=1$ if and only if $\ff$ is intersecting. The {\it $s$-diversity} $\gamma_s(\ff)$ of $\ff\subset 2^{[n]}$ is defined as $$\gamma_s(\ff):=\min\Big\{\big|\{F\in \ff: F\cap R= \emptyset\big|\ :\ R\in {[n]\choose s}\Big\}.$$
This notion seems to be the correct generalization of the notion of diversity to families $\ff$ with  $\nu(\ff)=s$. For $s=1$, we get back the usual notion of diversity. In Section~\ref{sec6}, we determine the maximum value of $\gamma_s(\ff)$ for $\ff\subset {[n]\choose k}$ for all $n>n_0(s,k)$ (and completely resolve the $k=2$ case).

In the next section, we recall the definition of shifting and discuss some of its properties. Lemma~\ref{lemcorrshift} (``shifted families are positively correlated''), although simple, appears to be a new and useful tool in the study of shifted families. In Section~\ref{sec3}, we discuss the Kruskal--Katona theorem and some other, more advanced, tools.

Most of the results that we present are for uniform families. However, we prove some results in the non-uniform case as well. For diversity of $r$-wise $t$-intersecting families in $2^{[n]}$, see Corollary~\ref{cor14} (shifted case) and Section~\ref{sec7} (general case).


\section{Shifting and initial families}\label{sec2}
For a set $F\subset [n]$, a family $\ff\subset 2^{[n]}$ and $1\le i<j\le n$, one defines the $i\leftarrow j$ shift $S_{ij}$ by
\begin{align*} S_{ij}(F):=&\begin{cases}
                  (F-\{j\})\cup \{i\} & \mbox{if } F\cap \{i,j\}=j, \\
                  F & \mbox{otherwise};
                \end{cases}\\
                S_{ij}(\ff):=& \ \big\{S_{ij}(F)\ :\ F\in\ff\big\}\cup \big\{F\ :\ F,S_{ij}(F)\in \ff\big\}.
\end{align*}
It is easy to see that $|S_{ij}(\ff)|=|\ff|$.  Since $|S_{ij}(F)|=|F|$, should $\ff$ be $k$-uniform, then so is $S_{ij}(\ff)$.
\begin{prop}[\cite{EKR,Fra3}]\begin{itemize}\item[(i)] If $\ff$ is $r$-wise $t$-intersecting then so is $S_{ij}(\ff)$;
\item[(ii)] $\nu\big(S_{ij}(\ff)\big)\le \nu(\ff)$.
\end{itemize}
\end{prop}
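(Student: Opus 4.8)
The plan is to reduce both parts to a single membership characterisation of $S_{ij}(\ff)$, immediate from the definition: a set $G$ lies in $S_{ij}(\ff)$ if and only if either $G\in\ff$, or $i\in G$, $j\notin G$ and $(G\setminus\{i\})\cup\{j\}\in\ff$; and, crucially, whenever $j\in G$, $i\notin G$ and $G\in S_{ij}(\ff)$, both $G$ and $(G\setminus\{j\})\cup\{i\}$ belong to $\ff$. Throughout, for $G\in S_{ij}(\ff)$ write $A(G):=G$ if $G\in\ff$ and $A(G):=(G\setminus\{i\})\cup\{j\}$ otherwise; then $A(G)\in\ff$ and $A(G)$ agrees with $G$ outside $\{i,j\}$.

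For (i), take $G_1,\dots,G_r\in S_{ij}(\ff)$ and set $A_m:=A(G_m)$, so $|A_1\cap\dots\cap A_r|\ge t$ since $\ff$ is $r$-wise $t$-intersecting. Let $Y:=(G_1\cap\dots\cap G_r)\setminus\{i,j\}$, which also equals $(A_1\cap\dots\cap A_r)\setminus\{i,j\}$. If no $G_m$ was changed we are done, so suppose $G_{m_0}\notin\ff$; then $j\notin G_{m_0}$ and $i\notin A_{m_0}$, hence $j\notin\bigcap_m G_m$ and $i\notin\bigcap_m A_m$, so $t\le|\bigcap_m A_m|\le|Y|+1$. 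If $|Y|\ge t$ we are done; otherwise $|Y|=t-1$ and $j\in A_m$ for every $m$. It remains to show $i\in\bigcap_m G_m$. If not, pick $m_1$ with $i\notin G_{m_1}$ (so $m_1\ne m_0$); then $G_{m_1}\in\ff$, hence $A_{m_1}=G_{m_1}$ and $j\in G_{m_1}$, and by the characterisation $(G_{m_1}\setminus\{j\})\cup\{i\}\in\ff$. Replacing $A_{m_1}$ by this set gives $r$ members of $\ff$, one of which ($A_{m_0}$) contains $j$ but not $i$ and another of which contains $i$ but not $j$; since the whole collection agrees with the $G_m$ outside $\{i,j\}$, its common intersection is exactly $Y$, of size $t-1$, contradicting $r$-wise $t$-intersection. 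Hence $i\in\bigcap_m G_m$ and $|\bigcap_m G_m|=|Y|+1=t$.

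For (ii), take pairwise disjoint $G_1,\dots,G_s\in S_{ij}(\ff)$. As they are disjoint, at most one contains $i$; if none does, or if the one that does already lies in $\ff$, then every $A(G_m)$ equals $G_m$ and $\nu(\ff)\ge s$. Otherwise let $G_{m_0}$ be the unique set containing $i$; note $j\notin G_{m_0}$ since $G_{m_0}\notin\ff$, and for $m\ne m_0$ we have $G_m\in\ff$, while $A(G_{m_0})=(G_{m_0}\setminus\{i\})\cup\{j\}$ differs from $G_{m_0}$ only by swapping $i$ for $j$. At most one $G_m$ contains $j$, and it is not $G_{m_0}$. If no $G_m$ with $m\ne m_0$ contains $j$, then $A(G_{m_0})$ is disjoint from every $G_m$ ($m\ne m_0$) and $\{A(G_{m_0})\}\cup\{G_m:m\ne m_0\}\subset\ff$ is a matching of size $s$. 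If some $G_{m_1}$ ($m_1\ne m_0$) contains $j$, then, as $i\notin G_{m_1}$, the set $(G_{m_1}\setminus\{j\})\cup\{i\}$ lies in $\ff$; replacing $G_{m_1}$ by it (and $G_{m_0}$ by $A(G_{m_0})$) yields $s$ sets in $\ff$ that are still pairwise disjoint, because the only two that can involve $i$ or $j$ — namely $A(G_{m_0})$ (contains $j$, not $i$) and the new set (contains $i$, not $j$) — are disjoint from each other and from the rest, which contain neither $i$ nor $j$. Thus $\nu(\ff)\ge s$.

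The bookkeeping of which of $i,j$ lies in which chosen set is routine; the one genuinely delicate point — and the reason the naive single replacement fails — is that applying $A(\cdot)$ may push $j$ into a coordinate already occupied by $j$ in another chosen set (breaking disjointness), or may remove $j$ from a set where it is needed (breaking $t$-intersection). I expect this to be the main obstacle, and in both parts it is resolved by the same device: a compensating second replacement that moves $j$ back to $i$ in the offending set, which is legitimate precisely because of the second half of the membership characterisation.
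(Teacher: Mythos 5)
Your proof is correct. The paper states this proposition as a classical fact (attributed to \cite{EKR,Fra3}) and supplies no proof, so there is no in-text argument to compare against; what you give is the standard one: pull each $G_m\in S_{ij}(\ff)$ back to a witness $A(G_m)\in\ff$ agreeing with $G_m$ off $\{i,j\}$, and repair the one bad configuration --- in which $i$ and $j$ end up split between two of the chosen sets --- by a compensating swap $j\to i$ in the offending set, justified by the observation that a set of $S_{ij}(\ff)$ containing $j$ but not $i$ must have been \emph{retained}, i.e.\ both it and its $j\to i$ image already lie in $\ff$. Both the intersection and the matching-number arguments are checked correctly; in (ii) the case distinctions cover all possibilities and the new collection is genuinely a matching in $\ff$.

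One small imprecision worth fixing: your opening characterisation is phrased as an ``if and only if,'' but the backward implication ``$G\in\ff\Rightarrow G\in S_{ij}(\ff)$'' is false in general --- a set $G\in\ff$ with $j\in G$, $i\notin G$ and $(G\setminus\{j\})\cup\{i\}\notin\ff$ is moved by the shift and does not survive in $S_{ij}(\ff)$. You only ever invoke the forward (necessity) direction, together with the separate retained-set observation, so the argument is unaffected; simply replace ``if and only if'' by ``only if.''
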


Given $\ff\subset 2^{[n]}$ and two disjoint sets $X,Y\subset [n]$, we define
$$\ff(X\bar Y):=\big\{F\setminus X: F\in\ff,F\cap (X\cup Y)=X\big\}.$$
For $X=\{i\}$, $Y=\{j\}$ we write $\ff(i\bar j)$ instead of $\ff\big(\{i\}\overline{\{j\}}\big)$. We define $\ff(X)$, $\ff(\bar Y)$ similarly.

The following proposition is obvious from the definitions of the $i\leftarrow j$ shift and diversity.

\begin{prop}\label{prop55} The following properties hold.
\begin{itemize}\item[(i)] For $x\in[n]\setminus \{i,j\}$, $d_{S_{ij}(\ff)}(x) = d_{\ff}(x)$;
\item[(ii)] $d_{S_{ij}(\ff)}(j) \le d_{S_{ij}(\ff)}(i)=d_{\ff}(i)+\big|\ff(\bar i j)\setminus \ff(i\bar j)\big|= d_{\ff}(j)+\big|\ff(i\bar j)\setminus \ff(\bar i j)\big|$;
\item[(iii)] $\gamma(S_{ij}(\ff))\ge \gamma(\ff)-\min\big\{\big|\ff(\bar i j)\setminus \ff(i\bar j)\big|, \big|\ff(i\bar j)\setminus \ff(\bar i j)\big|\big\}\ge \gamma(\ff)-\big|\ff(i\bar j)\oplus \ff(\bar i j)\big|/2$.
\end{itemize}
\end{prop}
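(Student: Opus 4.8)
The plan is to track exactly which members of $\ff$ are moved by $S_{ij}$ and how this affects the degree sequence; all three parts then reduce to elementary counting, the only delicate point being a small case analysis describing the link of the pair $\{i,j\}$ after shifting.

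First I would note that $S_{ij}$ fixes every set $F$ with $F\cap\{i,j\}\ne\{j\}$, so for such sets membership in $S_{ij}(\ff)$ is equivalent to membership in $\ff$; the sets that change are exactly those of the form $G\cup\{j\}$ with $G\subseteq[n]\setminus\{i,j\}$, and they are sent to $G\cup\{i\}$. Viewing $\ff(i\bar j)$ and $\ff(\bar i j)$ as subfamilies of $2^{[n]\setminus\{i,j\}}$, I would run the four cases according to whether $G\cup\{i\}\in\ff$ and whether $G\cup\{j\}\in\ff$ (remembering that when $G\cup\{i\}$ is already present the shift merely \emph{keeps} $G\cup\{j\}$ rather than creating a new set, and symmetrically) to obtain
\[
S_{ij}(\ff)(i\bar j)=\ff(i\bar j)\cup\ff(\bar i j),\qquad S_{ij}(\ff)(\bar i j)=\ff(i\bar j)\cap\ff(\bar i j),
\]
while $S_{ij}(\ff)(\{i,j\})=\ff(\{i,j\})$ and every set avoiding $\{i,j\}$ keeps its status; in particular, inside each pair $\{G\cup\{i\},G\cup\{j\}\}$ the shift preserves the number of present sets.

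Given this, part (i) is immediate: for $x\notin\{i,j\}$ I would group the sets containing $x$ according to their trace $G\ni x$ on $[n]\setminus\{i,j\}$; inside each group the count of present sets is unchanged, and sets avoiding $\{i,j\}$ are untouched, so $d_{S_{ij}(\ff)}(x)=d_{\ff}(x)$. For (ii) I would substitute the identities $d_{\ff}(i)=|\ff(\{i,j\})|+|\ff(i\bar j)|$, $d_{\ff}(j)=|\ff(\{i,j\})|+|\ff(\bar i j)|$, $d_{S_{ij}(\ff)}(i)=|\ff(\{i,j\})|+|\ff(i\bar j)\cup\ff(\bar i j)|$ and $d_{S_{ij}(\ff)}(j)=|\ff(\{i,j\})|+|\ff(i\bar j)\cap\ff(\bar i j)|$: the two stated formulas for $d_{S_{ij}(\ff)}(i)$ follow from $|A\cup B|=|A|+|B\setminus A|=|B|+|A\setminus B|$, and $d_{S_{ij}(\ff)}(j)\le d_{S_{ij}(\ff)}(i)$ from $|A\cap B|\le|A\cup B|$.

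For (iii) I would use $\gamma(\cdot)=|\cdot|-\Delta(\cdot)$ together with $|S_{ij}(\ff)|=|\ff|$ to reduce the claim to $\Delta(S_{ij}(\ff))\le\Delta(\ff)+c$, where $c:=\min\{|\ff(\bar i j)\setminus\ff(i\bar j)|,\,|\ff(i\bar j)\setminus\ff(\bar i j)|\}$. By (i) the degrees at coordinates outside $\{i,j\}$ are unchanged, hence at most $\Delta(\ff)$; by (ii) $d_{S_{ij}(\ff)}(i)$ is bounded by $\Delta(\ff)$ plus each of $|\ff(\bar i j)\setminus\ff(i\bar j)|$ and $|\ff(i\bar j)\setminus\ff(\bar i j)|$, hence by $\Delta(\ff)+c$; and $d_{S_{ij}(\ff)}(j)\le d_{S_{ij}(\ff)}(i)$. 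The last inequality in (iii) is just $\min\{|A\setminus B|,|B\setminus A|\}\le\frac12|A\oplus B|$. The only genuine obstacle is getting the four-case analysis in the second step exactly right; everything afterwards is bookkeeping — which is presumably why the authors call the statement obvious.
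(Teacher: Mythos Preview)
Your argument is correct and is precisely the kind of verification the paper has in mind when it says the proposition ``is obvious from the definitions of the $i\leftarrow j$ shift and diversity''; in particular your key identities $S_{ij}(\ff)(i\bar j)=\ff(i\bar j)\cup\ff(\bar i j)$ and $S_{ij}(\ff)(\bar i j)=\ff(i\bar j)\cap\ff(\bar i j)$ and the reduction of (iii) to $\Delta(S_{ij}(\ff))\le\Delta(\ff)+c$ via the two expressions for $d_{S_{ij}(\ff)}(i)$ in (ii) are exactly the intended unpacking. There is no alternative proof in the paper to compare against.
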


If $\{x_1,\ldots, x_k\}\subset [n]$ and $x_1<\ldots <x_k$ then we also denote it by $(x_1,\ldots, x_k)$. One defines the {\it shifting partial order} $\prec_{s}$ by
$$(x_1,\ldots, x_k)\prec_s (y_1,\ldots, y_k) \text{ iff } x_i\le y_i\text{ for all }1\le i\le k.$$
This can be naturally extended to $F,G\in {[n]\choose k}$ by arranging the elements of $F$ and $G$ in increasing order.
\begin{defn}
  A family $\ff\subset 2^{[n]}$ is called {\rm shifted} if $G\prec_s F\in \ff$ implies $G\in \ff$.
\end{defn}

Note that repeated applications of the $i\leftarrow j$ shifts, $1\le i<j\le n$ eventually produces a shifted family. The following statement should be obvious.
\begin{prop}\label{prop24} Suppose that $\ff\subset 2^{[n]}$ is shifted. Then (i) and (ii) hold.
\begin{itemize}
  \item[(i)] $d_{\ff}(1)\ge \ldots \ge d_{\ff}(n)$,
  \item[(ii)] $\gamma(\ff) = |\ff(\bar 1)| = |\ff|-d_{\ff}(1)$.
\end{itemize}
\end{prop}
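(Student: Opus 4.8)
The plan is to prove (i) by exhibiting, for each pair $i<j$, an explicit injection from the sets of $\ff$ containing $j$ into the sets containing $i$, and then to derive (ii) immediately from (i) together with the definition of diversity and the identity $\gamma(\ff)+\Delta(\ff)=|\ff|$.

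First I would fix $1\le i<j\le n$ and define a map $\phi$ on $\{F\in\ff:j\in F\}$ by $\phi(F)=F$ if $i\in F$, and $\phi(F)=(F\setminus\{j\})\cup\{i\}$ if $i\notin F$. In the second case $\phi(F)\prec_s F$, because replacing the index $j$ by the smaller index $i$ in the increasing arrangement can only decrease entries; hence shiftedness of $\ff$ gives $\phi(F)\in\ff$. In the first case $\phi(F)=F\in\ff$ trivially. Every set in the image of $\phi$ contains $i$, and $\phi$ is injective: from $\phi(F)$ one recovers $F$ as $\phi(F)$ itself when $j\in\phi(F)$, and as $(\phi(F)\setminus\{i\})\cup\{j\}$ otherwise. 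Therefore $d_{\ff}(j)=|\{F\in\ff:j\in F\}|\le|\{F\in\ff:i\in F\}|=d_{\ff}(i)$, which is exactly (i).

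For (ii), the definition of $\ff(\bar 1)$ gives $\ff(\bar 1)=\{F\in\ff:1\notin F\}$, so $|\ff(\bar 1)|=|\ff|-d_{\ff}(1)$. On the other hand, $\gamma(\ff)=\min_{i\in[n]}|\{F\in\ff:i\notin F\}|=\min_{i\in[n]}\bigl(|\ff|-d_{\ff}(i)\bigr)=|\ff|-\max_{i\in[n]}d_{\ff}(i)$, and by part (i) the maximum degree is attained at $i=1$. Combining the two displays yields $\gamma(\ff)=|\ff|-d_{\ff}(1)=|\ff(\bar 1)|$.

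There is essentially no real obstacle here; the only point deserving a moment's care is the verification that $(F\setminus\{j\})\cup\{i\}\prec_s F$ when $i\notin F$ — i.e.\ that deleting $j$ and inserting the smaller element $i$ does not increase any coordinate of the sorted tuple. This is precisely where shiftedness enters, and it is the same elementary observation that underlies the standard fact that iterating the $i\leftarrow j$ shifts terminates in a shifted family.
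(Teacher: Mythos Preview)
Your proof is correct. The paper itself provides no proof and simply declares the proposition ``obvious''; your explicit injection $\phi$ from $\{F\in\ff:j\in F\}$ into $\{F\in\ff:i\in F\}$ is exactly the natural way to make this obvious fact rigorous, and the deduction of (ii) from (i) via $\gamma(\ff)=|\ff|-\Delta(\ff)$ is immediate.
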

In many cases, the following easy result reduces the maximum diversity problem for shifted families to well-known extremal results.

\begin{lem}[\cite{Fra3}]\label{lem25} Suppose that $\ff\subset 2^{[n]}$ is $r$-wise $t$-intersecting and shifted. Then $\ff(\bar 1)$ is $r$-wise $(t+r-1)$-intersecting.
\end{lem}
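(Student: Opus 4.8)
The plan is to argue directly from the definition of the shifting partial order and the $r$-wise $t$-intersecting property. Take any $r$ sets $G_1,\dots,G_r\in\ff(\bar 1)$; by definition these are sets not containing $1$, and $G_i\cup\{1\}\in\ff$ for each $i$ (since $\ff$ is shifted, if $G_i\in\ff(\bar 1)$ then $G_i=F\setminus\{1\}$ for some $F\in\ff$ with $1\in F$, and conversely $G_i\in\ff$ forces $G_i\cup\{1\}\in\ff$ when $1\notin G_i$ — one has to be a little careful here, but the clean route is: for $G_i\in\ff(\bar1)$ we in fact have $G_i\cup\{1\}\in\ff$ because $\ff$ being shifted means $\ff$ is determined by its ``$1$-part'' in the sense that any set of the form $G\setminus\{j\}\cup\{1\}$ with $G\in\ff$ lies in $\ff$). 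So I want to produce, for each $G_i$, a second element $G_i'$ of $\ff$, obtained by swapping $1$ back in for some element, such that the $G_i$ together with the $G_i'$ witness a large intersection.

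The key step: fix the $r$ sets $G_1\cup\{1\},\dots,G_r\cup\{1\}\in\ff$. Their common intersection has size $\ge t$, but $1$ is in it, so $|G_1\cap\dots\cap G_r|\ge t-1$ a priori — I need $t+r-1$, i.e.\ I must gain $r$ more elements. The idea is to replace, for each $i=1,\dots,r$, the element $1$ in $G_i\cup\{1\}$ by a fresh element: pick $r$ distinct elements $z_1,\dots,z_r$ lying outside $G_1\cup\dots\cup G_r\cup\{1\}$ (possible as long as $n$ is large enough relative to $rk$; in the regime where Lemma~\ref{lem25} is applied this is automatic, and in any case one can first observe the conclusion is only nontrivial when such room exists, or restrict attention to $n\ge rk$). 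Since $z_i > 1$, the set $G_i\cup\{z_i\}$ is coordinatewise $\ge$... no — wait, I need $G_i\cup\{1\}\prec_s G_i\cup\{z_i\}$, which holds because replacing $1$ by the larger element $z_i$ only increases coordinates. Shiftedness then gives $G_i\cup\{z_i\}\in\ff$. Hmm, but shifted means $G\prec_s F\in\ff\Rightarrow G\in\ff$, i.e.\ it moves \emph{down}; so from $G_i\cup\{1\}\in\ff$ I get everything below it, not $G_i\cup\{z_i\}$. So this direction is wrong and I should instead use that $\ff(\bar1)\subseteq\ff$ directly.

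Let me restructure. The right approach: since $\ff$ is shifted, $G\in\ff(\bar1)$ implies $G\in\ff$ (as $G\prec_s G\cup\{1\}\setminus\{\max G\}$... actually simply: $\ff(\bar1)\subseteq\ff$ is a standard consequence — every set in $\ff$ not containing $1$ is in $\ff$ by definition of $\ff(\bar1)$). Now take $G_1,\dots,G_r\in\ff(\bar1)\subseteq\ff$. These already satisfy $|G_1\cap\dots\cap G_r|\ge t$, none contains $1$. I must upgrade $t$ to $t+r-1$. Consider the sets $G_i\cup\{1\}$: I claim each lies in $\ff$. Indeed $G_i\in\ff(\bar1)$ means there is $F_i\in\ff$ with $1\in F_i$, $F_i\setminus\{1\}=G_i$, so $G_i\cup\{1\}=F_i\in\ff$ — good, that's immediate. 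Now apply the $r$-wise $t$-intersecting property to the $r$ sets $G_1\cup\{1\}, G_2,\dots,G_r$: intersection $\ge t$; $1\notin G_2$ so $1$ is not in it, giving $|G_1\cap\dots\cap G_r|\ge t$ again — no gain. The gain must come from a cleverer choice.

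The actual mechanism (this is where the real content sits, and the step I expect to be the crux): for each $j=2,\dots,r$, replace $1$ in $F_1=G_1\cup\{1\}$ by an element $y_j$ chosen in $G_j\setminus(G_1\cap\dots\cap G_r)$ — wait, if $G_j$ has fewer than $r$ such elements there's a problem when $k$ is small. Cleaner: the statement is a known result of the first author (\cite{Fra3}); the standard proof takes $G_1,\dots,G_r\in\ff(\bar1)$, lets $Y=G_1\cap\dots\cap G_r$, supposes $|Y|\le t+r-2$, and derives a contradiction by finding $r$ sets in $\ff$ whose intersection has size $\le t-1$. Concretely, for each $i$, since $G_i\cup\{1\}\in\ff$ and $\ff$ is shifted, and picking for each $i$ a distinct element $x_i\in[n]$ larger than everything relevant and outside all the sets, one shifts $1\to x_i$... again downward/upward confusion. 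I'll commit to: enumerate $[2,r]$, and for the $i$-th set use the shift to swap out one more common element, formally via Proposition on shifted families, so that after $r-1$ swaps the intersection of the resulting $r$ sets in $\ff$ drops by $r-1$ below $|Y|$; combined with the $r$-wise $t$-intersecting bound $\ge t$ this forces $|Y|\ge t+r-1$.

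\medskip
\noindent\textit{(Main obstacle.)} The delicate point is getting the direction of shiftedness right and ensuring, for small $k$, that there are enough elements to swap; I expect the bookkeeping of which element is removed from which set — so that the $r$ modified sets are genuinely in $\ff$ and their pairwise structure forces the count — to be the only real work, the rest being definitional.
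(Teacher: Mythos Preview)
Your final plan (``swap out one more common element \dots\ intersection drops by $r-1$'') has the right shape, but you never name the actual swap, and several of the intermediate steps you try are wrong. The paper's proof is one line once the correct move is identified: assuming $G_1,\dots,G_r\in\ff(\bar1)$ have $Y:=G_1\cap\dots\cap G_r=\{x_1,\dots,x_{r+t-2}\}$, set $F_i:=(G_i\setminus\{x_i\})\cup\{1\}$ for $i=1,\dots,r-1$ and $F_r:=G_r$. Since $1<x_i$ one has $F_i\prec_s G_i\in\ff$, so $F_i\in\ff$ by shiftedness, and $F_1\cap\dots\cap F_r=\{x_r,\dots,x_{r+t-2}\}$ has size $t-1$, contradicting $r$-wise $t$-intersection.

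Two concrete errors in your attempts. First, $\ff(\bar1)=\{F\in\ff:1\notin F\}$, so $G_i\in\ff(\bar1)$ does \emph{not} give $G_i\cup\{1\}\in\ff$; that claim is false in general (shiftedness never adds elements) and is not needed. Second, fresh elements $z_i$ outside all the $G_j$ play no role whatsoever: the swap is $x_i\to 1$, shifting \emph{down}, and the elements you remove come from $Y$ itself. Your worry about ``enough elements to swap for small $k$'' is therefore misplaced --- you need $r-1$ elements of $Y$, and these are available since $|Y|\le r+t-2$ was assumed while $|Y|\ge t$ is forced by $G_i\in\ff$; in the degenerate case $|Y|<r-1$ one simply modifies only $|Y|$ sets and the resulting intersection is empty.
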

\begin{proof}
  Assume for contradiction that there exist $G_1,\ldots, G_r\in \ff(\bar 1)$ satisfying $G_1\cap \ldots \cap G_r = \{x_1,\ldots, x_{r+t-2}\}$. Set $F_i:=(G_i\setminus \{x_i\})\cup \{1\}$ for $i=1,\ldots, r-1$ and $F_r:=G_r$. Then $F_i\in \ff$ by shiftedness and
  $$F_1\cap \ldots \cap F_r = \{x_r,\ldots, x_{r+t-2}\},$$
  contradicting the $r$-wise $t$-intersecting property.
\end{proof}
\begin{cor}\label{cor13}
  Suppose that $n\ge 1+(k-t)(t+2)$ and $\ff\subset {[n]\choose k}$ is $2$-wise $t$-intersecting and shifted. Then
  \begin{equation}\label{eq21}
    \gamma(\ff)\le {n-t-2\choose k-t-1}.
  \end{equation}
\end{cor}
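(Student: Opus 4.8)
The plan is to reduce the statement to a classical bound on $(t+1)$-intersecting $k$-uniform families, using the two structural facts about shifted families recorded just above.

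First, since $\ff$ is shifted, Proposition~\ref{prop24}(ii) gives $\gamma(\ff)=|\ff(\bar 1)|$, so it suffices to bound $|\ff(\bar 1)|$. Observe that $\ff(\bar 1)$ is a $k$-uniform family living on the $(n-1)$-element ground set $[2,n]$. Second, by Lemma~\ref{lem25} applied with $r=2$, the family $\ff(\bar 1)$ is $2$-wise $(t+1)$-intersecting, i.e. an ordinary $(t+1)$-intersecting family.

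Now I invoke the classical theorem of Frankl (for $n$ large) and Wilson (with the optimal threshold), or equivalently the relevant range of the Ahlswede--Khachatrian complete intersection theorem: an $s$-intersecting family $\G\subset {[m]\choose k}$ with $m\ge (s+1)(k-s+1)$ satisfies $|\G|\le {m-s\choose k-s}$. Apply this with $\G=\ff(\bar 1)$, $m=n-1$, and $s=t+1$. The threshold becomes $(t+2)(k-t)$, and the hypothesis $n\ge 1+(k-t)(t+2)$ is precisely $m=n-1\ge (t+2)(k-t)$. Hence $|\ff(\bar 1)|\le {(n-1)-(t+1)\choose k-(t+1)}={n-t-2\choose k-t-1}$, which is \eqref{eq21}.

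There is essentially no obstacle beyond bookkeeping. The one point to watch is that passing from $[n]$ to the ground set $[2,n]$ costs one ground-set element while Lemma~\ref{lem25} raises the intersection parameter by one, so the two shifts in the parameters of the binomial coefficient cancel and yield exactly ${n-t-2\choose k-t-1}$; and one must check that $1+(k-t)(t+2)$ in the hypothesis matches the classical threshold $(s+1)(k-s+1)$ for $s=t+1$ on $n-1$ points. If one prefers not to cite the full intersection theorem, one can instead exploit that $\ff(\bar 1)$ is again shifted and give a short self-contained argument valid in the stated range, but invoking the known bound is the cleanest route.
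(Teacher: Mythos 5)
Your proof is correct and matches the paper's argument exactly: apply Proposition~\ref{prop24}(ii) to identify $\gamma(\ff)$ with $|\ff(\bar 1)|$, use Lemma~\ref{lem25} with $r=2$ to see that $\ff(\bar 1)$ is $(t+1)$-intersecting on $[2,n]$, and invoke the complete Erd\H{o}s--Ko--Rado theorem (Frankl/Wilson). The only addition is your explicit check that $n-1\ge(t+2)(k-t)$ is the correct threshold, which the paper leaves implicit.
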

\begin{proof}
  Since $\gamma(\ff)=|\ff(\bar 1)|$ by Proposition~\ref{prop24} (ii), we need to show $|\ff(\bar 1)|\le {n-t-2\choose k-t-1}$. By Lemma~\ref{lem25}, the family $\ff(\bar 1)\subset {[2,n]\choose k}$ is $2$-wise $(t+1)$-intersecting. Now \eqref{eq21} follows from the Complete Erd\H os-Ko-Rado Theorem \cite{Fra78, Wil}.
\end{proof}
\begin{cor}\label{cor14}
  Suppose that $r\ge 3$, $t\le 2^r-2r$ and $\ff\subset 2^{[n]}$ is $r$-wise $t$-intersecting and shifted. Then
  \begin{equation}\label{eq22} \gamma(\ff)\le 2^{n-r-t}.
  \end{equation}
\end{cor}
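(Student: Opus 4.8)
The plan is to reduce Corollary~\ref{cor14} to Lemma~\ref{lem25} together with a known upper bound on $r$-wise $s$-intersecting families in $2^{[m]}$. By Proposition~\ref{prop24}(ii), $\gamma(\ff) = |\ff(\bar 1)|$, so it suffices to bound $|\ff(\bar 1)|$, where $\ff(\bar 1)\subset 2^{[2,n]}$ is a family on an $(n-1)$-element ground set. By Lemma~\ref{lem25}, $\ff(\bar 1)$ is $r$-wise $(t+r-1)$-intersecting. So the whole statement follows once we know that an $r$-wise $s$-intersecting family in $2^{[m]}$ has size at most $2^{m-1}$ whenever $s$ is ``not too small'' relative to $r$ — precisely, we want: if $\G\subset 2^{[m]}$ is $r$-wise $s$-intersecting with $s\le 2^r-2r+(r-1)$, hmm, let me recompute: we need $t+r-1$ to be in the good range, and the hypothesis is $t\le 2^r-2r$, giving $t+r-1\le 2^r-r-1$. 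Then applying the bound $|\ff(\bar 1)|\le 2^{(n-1)-1} = 2^{n-2}$ would give $\gamma(\ff)\le 2^{n-2}$, which is far from the claimed $2^{n-r-t}$. So a cruder bound is not enough; I need the sharp bound for $r$-wise $s$-intersecting families.

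So the key input is the Frankl result on $r$-wise $s$-intersecting families in the Boolean lattice: for $r\ge 2$ and $s$ in the appropriate range, the maximum size of an $r$-wise $s$-intersecting family $\G\subset 2^{[m]}$ is $2^{m-s'}$ for a suitable $s'$, achieved by $\{G : [s']\subset G\}$ or a variant. The relevant statement here should be that for $r$-wise $s$-intersecting families, once $s$ is large enough relative to $r$ — in fact when $s\le 2^r - 2r$-type thresholds govern when the ``dictator-like'' family $\{G:[q]\subset G\}$ is optimal — one gets $|\G|\le 2^{m - q}$ for the right $q$. I would plug in $m = n-1$, $s = t+r-1$, and I expect the extremal structure to force $q = r+t-1$, yielding $|\ff(\bar 1)|\le 2^{(n-1)-(r+t-1)} = 2^{n-r-t}$, exactly matching \eqref{eq22}. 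The condition $t\le 2^r-2r$ should be exactly what guarantees we are in the regime where this ``fixed coordinates'' family beats the competing constructions for $r$-wise intersecting families in $2^{[m]}$.

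Concretely, I would proceed as follows. First, invoke Proposition~\ref{prop24}(ii) to write $\gamma(\ff)=|\ff(\bar 1)|$. Second, invoke Lemma~\ref{lem25} to conclude $\ff(\bar 1)$ is $r$-wise $(r+t-1)$-intersecting on ground set $[2,n]$ of size $n-1$. Third, cite (or prove, if short) the structural theorem on $r$-wise $s$-intersecting families in $2^{[m]}$ — the natural reference is work of Frankl on multiply intersecting families, analogous to the uniform Complete EKR Theorem used in Corollary~\ref{cor13} — to get $|\ff(\bar 1)|\le 2^{(n-1)-(r+t-1)} = 2^{n-r-t}$, checking that the hypothesis $t\le 2^r-2r$ places $s=r+t-1$ in the range where this bound holds. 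The main obstacle is identifying the precise non-uniform $r$-wise $s$-intersecting bound and verifying the arithmetic of the threshold: I need the statement that for $s \le 2^r - 2r + (r-1) = 2^r - r - 1$, the family $\{G\subset [m] : [s - (r-1)\cdot(\text{something})]\subset G\}$... — the cleanest formulation is likely that the maximum $r$-wise $s$-intersecting family in $2^{[m]}$ with $s$ in this range has size $2^{m-r-t+r-1}$... I would need to pin down exactly which $r$-wise $s$-intersecting extremal result is being invoked and confirm the constant $2^r-2r$ is precisely the transition point, analogously to how $1+(k-t)(t+2)$ appears in Corollary~\ref{cor13}.
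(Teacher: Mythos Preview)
Your approach is correct and is exactly the paper's proof: apply Proposition~\ref{prop24}(ii), then Lemma~\ref{lem25}, then the Frankl bound from \cite{F91}. To resolve your uncertainty at the end, the precise black-box you need is that any $r$-wise $s$-intersecting family $\G\subset 2^{[m]}$ with $s\le 2^r-r-1$ satisfies $|\G|\le 2^{m-s}$; plugging in $m=n-1$ and $s=r+t-1$ (which satisfies $s\le 2^r-r-1$ exactly because $t\le 2^r-2r$) gives $|\ff(\bar 1)|\le 2^{(n-1)-(r+t-1)}=2^{n-r-t}$.
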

  The family $$\aaa:=\big\{A\subset [n]\ :\ |A\cap [r+t]|\ge r+t-1\big\}$$ shows that \eqref{eq22} is best possible for $n\ge r+t$.
\begin{proof}
Note that $\ff(\bar 1)$ is $r$-wise $(r+t-1)$-intersecting and $r+t-1\le 2^r-r-1$ by our assumption. Thus \eqref{eq22} follows from the bound on the size of $r$-wise $(r+t-1)$-intersecting families, proved in \cite{F91}.
\end{proof}
Let us also remark that for $n<r+t$ one has $\gamma(\ff)=0$ whenever $\ff$ is $r$-wise $t$-intersecting. For the results on diversity of $r$-wise $t$-intersecting families $\ff\subset 2^{[n]}$ that are not necessarily shifted, see Section~\ref{sec7}.

In the next lemma we show that any two shifted families are positively correlated. (And thus behave similarly to {\it up-sets} in this respect.) It proved to be very useful in our proof. We believe that it will have more applications to other problems involving shifted families.
\begin{lem}[``Shifted families are positively correlated'']\label{lemcorrshift} Let $\ff_1,\ff_2\subset {[n]\choose k}$ be shifted. Then \begin{equation}\label{eqcorrshift}
|\ff_1\cap \ff_2|\ge |\ff_1||\ff_2|/{n\choose k}.
\end{equation}
\end{lem}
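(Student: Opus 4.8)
The plan is to induct on $n$, peeling off the coordinate $n$ via the standard decomposition $\ff_i = \ff_i(\bar n) \sqcup \ff_i(n)$, where $\ff_i(\bar n) \subset \binom{[n-1]}{k}$ and $\ff_i(n) \subset \binom{[n-1]}{k-1}$ (identifying $F$ with $F \setminus \{n\}$). The base case $n = k$ is trivial since then $\binom{[n]}{k}$ has exactly one set, and $n < k$ forces the families to be empty. For the inductive step, the first observation is that shiftedness is inherited: if $\ff_i$ is shifted then so are $\ff_i(\bar n)$ and $\ff_i(n)$ (as subfamilies of $\binom{[n-1]}{k}$ and $\binom{[n-1]}{k-1}$ respectively), and moreover $\ff_i(\bar n) \supseteq \ff_i(n)$ by shiftedness. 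The intersection splits as $|\ff_1 \cap \ff_2| = |\ff_1(\bar n) \cap \ff_2(\bar n)| + |\ff_1(n) \cap \ff_2(n)|$.

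Now I would apply the inductive hypothesis to each piece. Writing $a_i = |\ff_i(\bar n)|$, $b_i = |\ff_i(n)|$, so $|\ff_i| = a_i + b_i$, induction gives
\begin{align*}
|\ff_1(\bar n) \cap \ff_2(\bar n)| &\ge \frac{a_1 a_2}{\binom{n-1}{k}}, &
|\ff_1(n) \cap \ff_2(n)| &\ge \frac{b_1 b_2}{\binom{n-1}{k-1}}.
\end{align*}
So it suffices to prove the numerical inequality
$$\frac{a_1 a_2}{\binom{n-1}{k}} + \frac{b_1 b_2}{\binom{n-1}{k-1}} \ge \frac{(a_1 + b_1)(a_2 + b_2)}{\binom{n}{k}},$$
using the constraints $0 \le b_i \le a_i$, $0 \le a_i \le \binom{n-1}{k}$, $0 \le b_i \le \binom{n-1}{k-1}$ (the last from $\ff_i(n) \subseteq \binom{[n-1]}{k-1}$), together with the Pascal identity $\binom{n}{k} = \binom{n-1}{k} + \binom{n-1}{k-1}$. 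The point of the constraint $b_i \le a_i$ (which is exactly where shiftedness enters, beyond the induction) is that it couples the two coordinates correctly.

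The main obstacle is verifying this last elementary inequality in the right generality. I expect it reduces, after clearing denominators and writing $N_1 = \binom{n-1}{k}$, $N_2 = \binom{n-1}{k-1}$, $N = N_1 + N_2$, to showing
$$N_2(a_1 a_2) N + N_1 (b_1 b_2) N \ge N_1 N_2 (a_1 a_2 + a_1 b_2 + b_1 a_2 + b_1 b_2),$$
i.e. (using $N = N_1 + N_2$) to
$$N_2^2 a_1 a_2 + N_1^2 b_1 b_2 \ge N_1 N_2 (a_1 b_2 + b_1 a_2).$$
This is a ``two-by-two'' quadratic form inequality: setting $\alpha_i = N_2 a_i$ and $\beta_i = N_1 b_i$ it becomes $\alpha_1\alpha_2 + \beta_1\beta_2 \ge \alpha_1\beta_2/? + \dots$ — one should instead note it is equivalent, after completing the square or applying Chebyshev/rearrangement, to the fact that the sequences $(N_2 a_i / \sqrt{N_1 N_2})$ and $(N_1 b_i / \sqrt{N_1 N_2})$ are ``similarly sorted'' relative to the weights, which holds precisely because $a_i/N_1 \ge b_i/N_2$ is \emph{not} automatic — here is the subtlety. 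In fact the inequality $N_2^2 a_1 a_2 + N_1^2 b_1 b_2 \ge N_1 N_2(a_1 b_2 + b_1 a_2)$ does not hold for all nonnegative reals; it needs that $(N_2 a_1 - N_1 b_1)$ and $(N_2 a_2 - N_1 b_2)$ have the same sign, i.e. the ratios $a_i : b_i$ all lie on the same side of $N_1 : N_2$. Establishing that sign condition is the crux: it should follow by another layer of induction or a direct shifting argument showing that for a shifted family $\ff_i \subset \binom{[n]}{k}$ one has $|\ff_i(\bar n)| \binom{n-1}{k-1} \ge |\ff_i(n)| \binom{n-1}{k}$ — i.e. the density of $\ff_i$ among $k$-sets avoiding $n$ is at least its density among $k$-sets containing $n$ — which is exactly the ``$n$ is a minimum-degree-type element'' phenomenon for shifted families. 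Once that structural fact is in hand, the quadratic inequality is a one-line rearrangement argument, and the induction closes.
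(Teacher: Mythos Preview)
Your approach is essentially the paper's: both induct by peeling off an extreme coordinate and reduce to the same quadratic inequality $(N_2 a_1 - N_1 b_1)(N_2 a_2 - N_1 b_2) \ge 0$; the only cosmetic difference is that the paper peels off element $1$ (using that $1$ has \emph{maximum} degree in a shifted family), while you peel off $n$ (needing that $n$ has \emph{minimum} degree). Two small points. First, the containment ``$\ff_i(\bar n) \supseteq \ff_i(n)$'' is meaningless as written --- the two families consist of sets of different sizes --- and you never use it, so just drop it. Second, the ``structural fact'' you leave open at the end is immediate and needs no extra induction: since degrees in a shifted family are sorted $d_{\ff_i}(1)\ge\cdots\ge d_{\ff_i}(n)$, the degree of $n$ is the minimum, hence at most the average $k|\ff_i|/n$, and this rearranges exactly to $a_i\binom{n-1}{k-1} \ge b_i\binom{n-1}{k}$.
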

\begin{proof}
  Let us apply induction on $k$, and for fixed $k$ induction on $n$. The case $n=k$ is trivial, as well as the case $k=1$. Suppose that $n>k>1$ and that \eqref{eqcorrshift} holds for $n-1,k-1$ and $n-1,k$.

  Note that $\ff_i(1)\subset {[2,n]\choose k-1}$ and $\ff_i(\bar 1)\subset {[2,n]\choose k}$, $i=1,2$. These families are shifted. Therefore, by induction we have
  \begin{equation}\label{eqshift1}
    |\ff_1(1)\cap \ff_2(1)| \ge  \frac{|\ff_1(1)||\ff_2(1)|}{{n-1\choose k-1}},\ \ \ \ \ \ \ \
    |\ff_1(\bar 1)\cap \ff_2(\bar 1)| \ge  \frac{|\ff_1(\bar 1)||\ff_2(\bar 1)|}{{n-1\choose k}}.
  \end{equation}
Due to shiftedness, $\Delta(\ff_i) = |\ff_i(1)|$ for $i=1,2,$ and so \begin{equation}\label{eqshift3}\frac{|\ff_i(1)|}{{n-1\choose k-1}}\ge \frac {|\ff_i(\bar 1)|}{{n-1\choose k}}.
   \end{equation}
We have
\begin{align*}|\ff_1\cap \ff_2| =& |\ff_1(1)\cap \ff_2(1)| +|\ff_1(\bar 1)\cap \ff_2(\bar 1)| \overset{\eqref{eqshift1}}{\ge} \frac{|\ff_1(1)||\ff_2(1)|}{{n-1\choose k-1}}+\frac{|\ff_1(\bar 1)||\ff_2(\bar 1)|}{{n-1\choose k}}\\
=&\frac{|\ff_1||\ff_2|}{{n\choose k}}+\frac{\big({n-1\choose k}|\ff_1(1)|-{n-1\choose k-1}|\ff_1(\bar 1)|\big)\cdot\big({n-1\choose k}|\ff_2(1)|-{n-1\choose k-1}|\ff_2(\bar 1)|\big)}{{n\choose k}{n-1\choose k}{n-1\choose k-1}}\\
\overset{\eqref{eqshift3}}{\ge}& \frac{|\ff_1||\ff_2|}{{n\choose k}},\end{align*}
where the equality between the first and second lines is easy to verify by direct computation (multiplying out the brackets in the latter expression).
\end{proof}

\section{Advanced tools}\label{sec3}

It was first observed by Daykin \cite{Day3} that Theorem~\ref{thmkk} implies Theorem~\ref{thmekrcross}.
Let us show this relationship and restate Theorem~\ref{thmkk} in terms of cross-intersecting families.
\begin{cor}\label{corkk}
  Let $n\ge a+b$. If $\aaa\subset {[n]\choose a}$ and $\bb\subset {[n]\choose b}$ are cross-intersecting and $|\aaa|\ge {x\choose n-a}$ for some $n-a\le x\le n$, then $|\bb|\le {n\choose b}-{x\choose b}$.
\end{cor}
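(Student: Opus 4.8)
The plan is to use the complementation trick of Daykin together with an iterated form of the Kruskal--Katona theorem. First I would pass to complements: set $\aaa^c:=\{[n]\setminus A: A\in\aaa\}\subset{[n]\choose n-a}$, so that $|\aaa^c|=|\aaa|\ge{x\choose n-a}$. The cross-intersecting hypothesis ``$A\cap B\ne\emptyset$ for all $A\in\aaa$, $B\in\bb$'' is equivalent to ``$B\not\subseteq [n]\setminus A$ for all $A\in\aaa$, $B\in\bb$'', i.e.\ no member of $\bb$ is contained in any member of $\aaa^c$. Since $n\ge a+b$ gives $b\le n-a$, this says precisely that $\bb$ is disjoint from the $b$-shadow $\partial^{b}(\aaa^c)$, and hence $|\bb|\le{n\choose b}-|\partial^{b}(\aaa^c)|$.

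It remains to show $|\partial^{b}(\aaa^c)|\ge{x\choose b}$. Since $t\mapsto{t\choose n-a}$ is continuous and strictly increasing for $t\ge n-a$ and $|\aaa^c|\le{n\choose n-a}$, there is a real $x'$ with $x\le x'\le n$ and $|\aaa^c|={x'\choose n-a}$. I would then iterate Theorem~\ref{thmkk}: if $\G$ is a family of $j$-sets with $|\G|={y\choose j}$ for some real $y\ge j$, then $|\partial\G|\ge{y\choose j-1}$; because $s\mapsto{s\choose j-1}$ is increasing there is a real $y'\ge y$ with $|\partial\G|={y'\choose j-1}$, and since $\partial(\partial\G)=\partial^{j-2}\G$ the theorem may be reapplied to $\partial\G$, yielding after $l$ steps $|\partial^{j-l}\G|\ge{y\choose j-l}$ for every $0\le l\le j$; this monotone conclusion clearly persists when $|\G|\ge{y\choose j}$ as well. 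Applying it with $\G=\aaa^c$, $j=n-a$, $y=x'$ and $l=(n-a)-b$ gives $|\partial^{b}(\aaa^c)|\ge{x'\choose b}\ge{x\choose b}$, as desired.

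Combining the two displays yields $|\bb|\le{n\choose b}-{x\choose b}$. I do not expect a real obstacle here; the only points needing care are the degenerate cases (the hypothesis forces ${x\choose n-a}\le|\aaa|$, so in particular $\aaa\ne\emptyset$, and when $b=n-a$ the $b$-shadow of $\aaa^c$ is $\aaa^c$ itself, so the estimate reads $|\bb|\le{n\choose b}-|\aaa^c|\le{n\choose b}-{x\choose b}$), and the bookkeeping ensuring $x'\le n$ so that $\partial^{b}(\aaa^c)$ genuinely lies inside ${[n]\choose b}$. The substantive input is Theorem~\ref{thmkk}; everything else is manipulation of binomial coefficients.
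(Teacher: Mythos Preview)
Your proof is correct and follows essentially the same route as the paper: pass to complements $\aaa^c$, observe that $\bb$ is disjoint from $\partial^b(\aaa^c)$, and bound the $b$-shadow from below via the Lov\'asz form of Kruskal--Katona. The only difference is cosmetic: the paper invokes Theorem~\ref{thmkk} in one line to get $|\partial^b\aaa^c|\ge{x\choose b}$, whereas you spell out the standard iteration step by step; your treatment of the degenerate case $b=n-a$ matches the paper's as well.
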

Although it is rather standard, we give the proof for completeness.
\begin{proof}
  Consider the family $\aaa^c:=\{[n]\setminus A: A\in\aaa\}$. We have $|\aaa^c|=|\aaa|\ge {x\choose n-a}$, and, using Theorem~\ref{thmkk}, we get $|\partial^{b}\aaa^c|\ge {x\choose b}$ (note that, in case $n=a+b$, $\partial^b\aaa^c = \aaa^c$ and thus we do not even need Theorem~\ref{thmkk}). It should be clear that $\bb$ and $\partial^{b}\aaa^c$ are disjoint, which implies the bound in the corollary.
\end{proof}

Recall that ``$\oplus$'' stands for symmetric difference. Let us next state the Kruskal--Katona theorem in terms of (co)lexicographic order. For two sets $A,B\subset {[n]\choose k}$ we say that $A\prec_{lex} B$ or that $A$ {\it precedes $B$ in the lexicographic order}, if the smallest element in $A\oplus B$ belongs to $A$. Similarly, $A\prec_{colex} B$ if the largest element in $A\oplus B$ belongs to $B$. We note that if $A\prec_s B$, then both $A\prec_{lex} B$ and $A\prec_{colex} B$. If for some $t$ $\ff\subset {[n]\choose k}$ consists of the $t$ first sets in the lexicographic order, then we call $\ff$ an {\it initial segment (in the lexicographic order)}, and similarly for the colexicographic order. For a set $X$ and integers $t,k$, let us denote $\mathcal L(X,t,k)$ the family of the first $t$ $k$-element subsets of $X$ in the lexicographic order ({\it the initial segment of size $t$ in ${X\choose k}$}), and define  $\mathcal {C}(X,t,k)$ similarly for colexicographic order.

The following is the classic form of the Kruskal-Katona theorem.
\begin{thm}[\cite{Kr,Ka}]\label{thmkk2}
  If $\ff\subset {[n]\choose k}$, then $|\partial \ff|\ge \partial \mathcal {C}([n],|\ff|,k)$.
\end{thm}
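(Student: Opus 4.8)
The statement is the classical Kruskal--Katona theorem, so the plan is to reproduce one of the standard proofs; I would use the compression-plus-induction route, since it meshes with the shifting machinery of Section~\ref{sec2}. Two preliminary reductions set the stage. First, beyond the properties recorded in Section~\ref{sec2}, an $i\leftarrow j$ shift never enlarges the immediate shadow, i.e.\ $|\partial S_{ij}(\ff)|\le|\partial\ff|$; this standard fact follows from the inclusion $\partial\, S_{ij}(\ff)\subseteq S_{ij}(\partial\ff)$, checked by a short case analysis on which of $i,j$ lies in a given shadow set. Since $|S_{ij}(\ff)|=|\ff|$, iterated shifts terminate in a shifted family, and $\cc([n],|\ff|,k)$ is itself shifted (because $A\prec_s B$ implies $A\prec_{colex}B$), it suffices to prove $|\partial\ff|\ge|\partial\cc([n],|\ff|,k)|$ under the extra hypothesis that $\ff$ is shifted. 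Second, I would record the standard structural fact that the immediate shadow of a colexicographic initial segment is again one, so $\partial\cc([n],m,k)=\cc([n],\psi_k(m),k-1)$ for a nondecreasing function $\psi_k$; in particular the quantity to be beaten is nondecreasing in $|\ff|$.

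For the inductive step, I induct on $n$ (the cases $n=k$ and $k=1$ being trivial), assuming $k<n$. Split $\ff$ at the largest element: $\ff_0:=\ff\cap\binom{[n-1]}{k}$ and $\ff_1:=\{F\setminus\{n\}:n\in F\in\ff\}\subset\binom{[n-1]}{k-1}$, of sizes $a$ and $c$. Shiftedness gives $\ff_1\subset\partial\ff_0$ --- for $G\cup\{n\}\in\ff$, replacing $n$ by any element of $[n-1]\setminus G$ produces a set of $\ff_0$ containing $G$ --- and together with the obvious decomposition of $\partial\ff$ into its sets avoiding $n$ (which are exactly $\partial\ff_0\cup\ff_1=\partial\ff_0$) and its sets containing $n$ (which biject with $\partial\ff_1$), this yields the clean identity $|\partial\ff|=|\partial\ff_0|+|\partial\ff_1|$. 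Both $\ff_0$ and $\ff_1$ are shifted and live on $[n-1]$, so the induction hypothesis bounds this below by $|\partial\cc([n-1],a,k)|+|\partial\cc([n-1],c,k-1)|$. Since the colex initial segment of $\binom{[n]}{k}$ of size $a+c$ is precisely $\cc([n-1],a,k)$ together with $\{G\cup\{n\}:G\in\cc([n-1],c,k-1)\}$ exactly when $(a,c)$ is the extreme split $a=\min\{a+c,\binom{n-1}{k}\}$, the theorem reduces to a purely numerical inequality comparing an arbitrary admissible split against this extreme one, entirely within dimension $n-1$.

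That numerical inequality, $|\partial\cc([n-1],a,k)|+|\partial\cc([n-1],c,k-1)|\ge|\partial\cc([n],a+c,k)|$, is the real content of the theorem, and it is simply false without the extra information --- supplied by shiftedness --- that $c=|\ff_1|\le|\partial\ff_0|$, so that $c$ cannot be too large relative to $a$. I would feed this constraint in and then verify the resulting inequality through the cascade representation $a=\binom{a_k}{k}+\binom{a_{k-1}}{k-1}+\cdots$, under which the colex shadow has size $\binom{a_k}{k-1}+\binom{a_{k-1}}{k-2}+\cdots$, by a secondary induction on $k$ together with repeated use of $\binom{p}{q}=\binom{p-1}{q}+\binom{p-1}{q-1}$; the combinatorial heart here is a monotonicity/convexity property relating the colex-shadow function at consecutive uniformities. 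I expect essentially all of the difficulty to sit in this bookkeeping --- everything preceding it is formal. Finally, the Lov\'asz form (Theorem~\ref{thmkk}) drops out: for integer $x$ the cascade representation of $\binom{x}{k}$ is the single term $\binom{x}{k}$, whose colex shadow has size $\binom{x}{k-1}$, and a short convexity argument (comparing the cascade-shadow function with the polynomial $t\mapsto\binom{t}{k-1}$) extends the bound to all real $x\ge k$.
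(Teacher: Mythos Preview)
Your proposal outlines a correct and standard proof --- the compression-plus-induction route. The paper itself does not prove Theorem~\ref{thmkk2}; it is quoted as a classical result of Kruskal and Katona. The paper does, however, single out a specific proof by stating Lemma~\ref{lemday2} (that $|\partial S_{U,V}^{colex}(\ff)|\le|\partial\ff|$) and describing it as ``the essence of Daykin's short and elegant proof of the Kruskal--Katona Theorem.'' That route is genuinely different from yours: rather than ordinary $i\leftarrow j$ shifts followed by induction on $n$ and a cascade-based numerical inequality, Daykin repeatedly applies the generalized colex shifts $S_{U,V}^{colex}$; each such shift preserves $|\ff|$, does not increase $|\partial\ff|$ by Lemma~\ref{lemday2}, and moves $\ff$ strictly closer in colex to the initial segment $\cc([n],|\ff|,k)$, so the process terminates there and the theorem falls out with no numerical work at all. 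Your approach stays entirely within the elementary single-element shifts of Section~\ref{sec2} but pays for it with the cascade bookkeeping you rightly flag as ``essentially all of the difficulty''; Daykin's approach trades that for the one-time cost of introducing the $(U,V)$-shifts and proving Lemma~\ref{lemday2} --- machinery the paper needs anyway for Lemma~\ref{lemday} and the arguments in Sections~\ref{sec5} and~\ref{sec42}.
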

Similarly, one can derive a slightly weaker version of Theorem~\ref{thmkk2} in terms of cross-intersecting families. This was observed by Hilton \cite{Hil}.
\begin{cor}\label{corkk2}
  Let $n\ge a+b$ and assume that $\aaa\subset {[n]\choose a}$, $\bb\subset {[n]\choose b}$ are cross-intersecting. Then $\mathcal L([n],|\aaa|,a)$ and $\mathcal L([n],|\bb|,b)$ are cross-intersecting.
\end{cor}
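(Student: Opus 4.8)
The plan is to prove Corollary~\ref{corkk2} directly from Corollary~\ref{corkk} (the cross-intersecting reformulation of Kruskal--Katona), without re-deriving the shadow bound from scratch. The observation of Hilton is essentially that the colex/lex reformulation is equivalent to the shadow statement, so all the work is really in translating between initial segments in the lexicographic order on $a$-sets and colex-type statements about complements. First I would recall the standard bijection: for $\aaa\subset{[n]\choose a}$, the complement family $\aaa^c=\{[n]\setminus A:A\in\aaa\}\subset{[n]\choose n-a}$ satisfies $|\aaa^c|=|\aaa|$, and the map $A\mapsto [n]\setminus A$ turns the lexicographic order on $a$-sets into the colexicographic order on $(n-a)$-sets. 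Consequently $\mathcal L([n],|\aaa|,a)^c=\mathcal C([n],|\aaa|,n-a)$, i.e.\ initial segments in lex correspond to initial segments in colex under complementation.

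Next I would set $\aaa':=\mathcal L([n],|\aaa|,a)$ and $\bb':=\mathcal L([n],|\bb|,b)$ and show they are cross-intersecting. Write $|\aaa'|=|\aaa|$; since $\aaa'^c=\mathcal C([n],|\aaa|,n-a)$ is an initial segment in the colex order on $(n-a)$-sets, Theorem~\ref{thmkk2} applied to $\aaa'^c$ (iterated $a$ times, or directly in the $\partial^b$ form as in Corollary~\ref{corkk}'s proof) gives that $\partial^b(\aaa'^c)$ is itself an initial segment in colex and has size $\big|\partial^b\mathcal C([n],|\aaa|,n-a)\big|$, which is the minimum possible $b$-shadow size over all $(n-a)$-families of that cardinality. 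The key point is that for an initial segment in colex, its $b$-shadow is again an initial segment in colex; so $\partial^b(\aaa'^c)=\mathcal C([n],m,b)$ for the appropriate $m=\big|\partial^b\aaa'^c\big|$. Now I claim $\bb'$ and $\partial^b(\aaa'^c)$ are disjoint: a set $B$ lies in $\partial^b(\aaa'^c)$ iff $B\subset [n]\setminus A$ for some $A\in\aaa'$, i.e.\ iff $B\cap A=\emptyset$ for some $A\in\aaa'$; so if $B\in\bb'\cap\partial^b(\aaa'^c)$ then some $A\in\aaa'$ is disjoint from $B\in\bb'$. To rule this out I use that $\bb'$ is the first $|\bb|$ sets in lex and $\mathcal C([n],m,b)$ is the first $m$ sets in colex, so $\bb'\cup\mathcal C([n],m,b)$ is disjoint precisely when $|\bb|+m\le{n\choose b}$ — and this numerical inequality is exactly what cross-intersectingness of $\aaa,\bb$ gives, via $m=\big|\partial^b\aaa'^c\big|\ge|\aaa'^c|$-driven lower bound matched against the fact that $\aaa,\bb$ themselves are cross-intersecting so $|\bb|\le {n\choose b}-|\partial^b\aaa^c|\le{n\choose b}-m$ by Corollary~\ref{corkk} (monotonicity of the colex shadow in the size).

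The cleaner way to organize the last step, which I would actually write, is: apply Corollary~\ref{corkk} to the \emph{original} $\aaa,\bb$ with $x$ chosen so that $|\aaa|={x\choose n-a}$ (or interpolate: pick the largest initial colex segment of $(n-a)$-sets of size $\le|\aaa|$), obtaining $|\bb|\le{n\choose b}-|\partial^b\aaa^c|$. By Theorem~\ref{thmkk2}, $|\partial^b\aaa^c|\ge|\partial^b\aaa'^c|=|\partial^b\mathcal C([n],|\aaa|,n-a)|=m$, so $|\bb'|=|\bb|\le{n\choose b}-m$. Since $\partial^b(\aaa'^c)=\mathcal C([n],m,b)$ and a $b$-family of size $\le{n\choose b}-m$ that is an initial lex segment is disjoint from the initial colex segment of size $m$ (as the lex order and colex order are ``opposite ends'' of ${[n]\choose b}$: the first ${n\choose b}-m$ sets in lex and the first $m$ sets in colex partition ${[n]\choose b}$ exactly when the counts sum to ${n\choose b}$, and are disjoint when they sum to at most that), $\bb'$ is disjoint from $\partial^b(\aaa'^c)$. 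Disjointness of $\bb'$ from $\partial^b(\aaa'^c)=\{B:\exists A\in\aaa',\ A\cap B=\emptyset\}$ is exactly the assertion that $\aaa',\bb'$ are cross-intersecting.

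The main obstacle I expect is purely bookkeeping: making the correspondence ``lex initial segment of $a$-sets $\leftrightarrow$ colex initial segment of $(n-a)$-sets $\leftrightarrow$ its $b$-shadow is again a colex initial segment'' fully rigorous, and pinning down the exact numerical threshold at which an initial lex segment and an initial colex segment of ${[n]\choose b}$ become disjoint. None of this is deep, but one must be careful that the shadow of a colex initial segment is genuinely colex-initial (this is the standard ``compression to initial segments'' fact underlying the Kruskal--Katona proof, and could alternatively be cited), and that the size bound from Corollary~\ref{corkk} is applied with the correct real parameter $x$ (or handled by the elementary monotonicity of colex shadows in the family size, avoiding reals altogether). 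Given the excerpt says this is ``rather standard,'' I would keep the write-up to a short paragraph invoking Corollary~\ref{corkk} and Theorem~\ref{thmkk2} together with the complementation bijection.
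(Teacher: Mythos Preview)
Your overall strategy is the right one, but the central bijection is stated incorrectly and this breaks the argument as written. The map $A\mapsto [n]\setminus A$ does \emph{not} send the lexicographic order on $a$-sets to the colexicographic order on $(n-a)$-sets: since $A\oplus B=A^c\oplus B^c$, the condition $\min(A\oplus B)\in A$ is equivalent to $\min(A^c\oplus B^c)\in B^c$, so complementation simply \emph{reverses} lex. Consequently $\mathcal L([n],|\aaa|,a)^c$ is a lex-terminal segment of $(n-a)$-sets, not $\mathcal C([n],|\aaa|,n-a)$. The follow-up claim that an initial lex segment and an initial colex segment of ${[n]\choose b}$ are ``opposite ends'' and hence disjoint once their sizes sum to at most ${n\choose b}$ is also false: both contain $\{1,\dots,b\}$, and for instance with $n=4$, $b=2$ the first three lex sets $\{12,13,14\}$ and the first three colex sets $\{12,13,23\}$ overlap.

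The fix is to compose complementation with the ground-set reversal $\sigma:i\mapsto n+1-i$. One checks directly that $A\prec_{lex}B$ iff $\sigma(A^c)\prec_{colex}\sigma(B^c)$, so $\sigma\big((\aaa')^c\big)=\mathcal C([n],|\aaa|,n-a)$. Since shadows commute with relabelling and the $b$-shadow of a colex initial segment is again a colex initial segment, $\partial^b\big((\aaa')^c\big)=\sigma^{-1}\big(\mathcal C([n],m,b)\big)$ is the lex-\emph{terminal} segment of size $m$ in ${[n]\choose b}$. Now your numerical step goes through: from Theorem~\ref{thmkk2} applied to $\aaa^c$ you get $|\bb|\le{n\choose b}-m$, and a lex initial segment of size $\le{n\choose b}-m$ is trivially disjoint from the lex terminal segment of size $m$, which is exactly cross-intersection of $\aaa',\bb'$. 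The paper does not give its own proof of this corollary (it merely attributes it to Hilton), so there is nothing further to compare once the bijection is corrected.
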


Fix any two disjoint sets $U,V$ of the same size. The following operation, which generalizes shifting, was introduced by Daykin \cite{Day}.  For a set $F$, the {\it $U\leftarrow V$ shift} $S_{U,V}(F)$ is $S_{U,V}(F):=(F\setminus V)\cup U$ if $F\cap (V\cup U)=V$, and $S_{U,V}(F):=F$ otherwise. For a family $\ff\subset 2^{[n]}$,  define
$$S_{U,V}(\ff):=\big\{S_{U,V}(F):F\in\ff\big\}\cup \big\{F:F,S_{U,V}(F)\in\ff\big\}.$$

Daykin used it in the following setting.
Take integers $n>k>0$ and consider a family $\ff\subset {[n]\choose k}$. If $\ff$ is not an initial segment in the colex order, then there exists a pair $(U,V)$ of disjoint sets, such that $|U|=|V|$, $U\prec_{colex} V$ and a set $F\in \ff$ satisfying $F\cap (U\cup V) = V$, such that $(F\setminus V)\cup U$ is not in $\ff$. Take such pair $(U,V)$ that is inclusion-minimal and apply $S_{U,V}$ to $\ff$. To underline the choice of $U$ and $V$, we denote any such $S_{U,V}$-shift by $S_{U,V}^{colex}$.

The next lemma is the essence of Daykin's short and elegant proof of the Kruskal--Katona Theorem.

\begin{lem}\label{lemday2}
  We have $\big|\partial S_{U,V}^{colex}(\ff)\big|\le |\partial \ff|$.
\end{lem}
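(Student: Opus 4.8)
The plan is to show that the $U\leftarrow V$ shift applied to $\ff$ does not increase the shadow, by tracking, set by set, where new elements of $\partial S_{U,V}^{colex}(\ff)$ can come from and exhibiting an injection from these ``new'' shadow sets into the ``lost'' ones. Write $\ff'=S_{U,V}^{colex}(\ff)$, $m=|U|=|V|$, and recall that by the inclusion-minimal choice of $(U,V)$ every proper ``subpair'' has already been closed off; concretely, for any $u\in U$, $v\in V$ the pair $(U\setminus\{u\},V\setminus\{v\})$ does not witness a violation, and this is what makes the local analysis tractable. First I would partition $\ff$ into the sets that are fixed by $S_{U,V}$ and the pairs $\{F,(F\setminus V)\cup U\}$ that get genuinely moved, and likewise partition the shadows.

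Next I would analyze $\partial\ff'\setminus\partial\ff$. A set $G\in\binom{[n]}{k-1}$ lies in $\partial\ff'$ but not in $\partial\ff$ only if every set of $\ff'$ covering $G$ was newly created by the shift, i.e. is of the form $(F\setminus V)\cup U$ with $F\in\ff$ but $(F\setminus V)\cup U\notin\ff$. Such a $G$ must then satisfy $G\cap(U\cup V)\supseteq U\setminus\{u\}$ for the relevant structure; the key point is that $G$ has the form $G=(H\setminus V)\cup U$ or $G=(H\setminus V)\cup(U\setminus\{u\})$ for a suitable $H\in\partial\ff$ with $H\cap(U\cup V)=V$ (resp. $V\setminus\{v\}$) that is \emph{not} already in $\partial\ff'$ — i.e. whose $U$-translate was lost from the shadow. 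I would then define the injection $\Phi\colon\partial\ff'\setminus\partial\ff\to\partial\ff\setminus\partial\ff'$ by replacing the copy of $U$ (or $U\setminus\{u\}$) inside $G$ by $V$ (or $V\setminus\{v\}$), and verify (a) that $\Phi(G)\in\partial\ff$ — using that $G\in\partial\ff'$ forces some witnessing set in $\ff$ whose $V$-shifted preimage lies in $\ff$ — (b) that $\Phi(G)\notin\partial\ff'$ — this is where minimality of $(U,V)$ is essential, since it prevents $\Phi(G)$ from being covered by any set that survived the shift — and (c) that $\Phi$ is injective, which is immediate because $\Phi$ is inverted by the reverse substitution on its image.

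The main obstacle I expect is case (b): ruling out that $\Phi(G)$ is still in the shadow of the shifted family. A set $H$ with a copy of $V$ could in principle be covered either by a set of $\ff$ fixed by the shift, or by a created set $(F'\setminus V)\cup U$ that happens to contain $H$. Handling the first possibility requires showing that if such a fixed set existed then $G$ would already have been in $\partial\ff$, contradicting $G\in\partial\ff'\setminus\partial\ff$; handling the second requires the inclusion-minimality of $(U,V)$, because a smaller violating pair hidden inside the configuration would contradict the choice of $S_{U,V}^{colex}$. A secondary nuisance is bookkeeping the two regimes according to whether $U$ (equivalently $V$) is entirely contained in $G$ or only $m-1$ of its elements are, together with which single element of $U$ or $V$ is missing; I would organize this by conditioning on $|G\cap U|$ and $|G\cap V|$ and noting that only the cases $(|G\cap U|,|G\cap V|)\in\{(m,0),(m-1,0),(m-1,1)\}$ can produce genuinely new shadow sets, each treated in turn. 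Once the injection is in place, $|\partial\ff'\setminus\partial\ff|\le|\partial\ff\setminus\partial\ff'|$ gives $|\partial\ff'|\le|\partial\ff|$, which is the assertion.
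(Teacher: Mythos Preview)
The paper does not actually prove Lemma~\ref{lemday2}; it attributes it to Daykin and only proves the cross-intersecting analogue, Lemma~\ref{lemday}. Your injection strategy is the right framework and is essentially Daykin's original argument, so in spirit you are aligned with what the paper cites.

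That said, your case analysis is off in two places. First, the case $(|G\cap U|,|G\cap V|)=(m-1,1)$ cannot occur: every witness for $G\in\partial\ff'\setminus\partial\ff$ is a newly created set $(F\setminus V)\cup U$, which is disjoint from $V$, so $G\cap V=\emptyset$ always. Second—and this is the real gap in your sketch—in case $(m-1,0)$ you propose mapping $G$ to $(G\setminus(U\setminus\{u\}))\cup(V\setminus\{v\})$ without specifying how $v$ depends on $u$; without that, $\Phi$ is not a well-defined injection. In fact this case is \emph{empty}. If $G\cap(U\cup V)=U\setminus\{u\}$, the only possible witness in $\ff'$ is $G\cup\{u\}$, newly created from $F:=(G\setminus U)\cup V\in\ff$. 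Pick any $v\in V$ with $v\ne\max V$; then $\max V\in V\setminus\{v\}$ and $\max V>\max U$, so $U\setminus\{u\}\prec_{colex}V\setminus\{v\}$. Since $F\cap\big((U\setminus\{u\})\cup(V\setminus\{v\})\big)=V\setminus\{v\}$, inclusion-minimality of $(U,V)$ forces $(F\setminus(V\setminus\{v\}))\cup(U\setminus\{u\})=G\cup\{v\}\in\ff$, whence $G\in\partial\ff$ after all.

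So only the case $U\subset G$ survives, the injection is simply $\Phi(G)=(G\setminus U)\cup V$, and your verification of (a), (b), (c) goes through for that single case (with minimality again handling the sub-case where a fixed $F''\supset\Phi(G)$ meets $U$). Your diagnosis of \emph{where} minimality is needed was correct; what was missing is that it already kills the $(m-1,0)$ case outright, which dissolves the ``which $v$?'' problem you left open.
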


We can define a similar operation in the cross-intersecting setting.
Take integers $a,b,n$, where $n\ge a+b$, and cross-intersecting families $\aaa\subset {[n]\choose a}$, $\bb\subset {[n]\choose b}$. Assume that $\aaa$  and $\bb$ are not initial segments in the lexicographic order. Then we can find members $A\in\aaa$, $B\in \bb$ and pairwise disjoint sets $(U,V)$ and $(U',V')$ such that $|U|=|V|,$ $|U'|=|V'|$, $U\prec_{lex} V$, $U'\prec_{lex} V'$, $A\cap (U\cup V)=V$, $B\cap (U'\cup V')=V'$ and the sets $(A\setminus V)\cup U$, $(B\setminus V')\cup U'$ are not in $\aaa$ and $\bb$, respectively.
Let us choose the sets that are inclusion-minimal and assume by symmetry that $|U|\le |U'|$.
Then the operation $S_{U,V}$ for any such pair $U,V$ is denoted by  $S_{U,V}^{lex}$.

The next lemma is essentially Lemma~\ref{lemday2} restated in cross-intersecting terms.
\begin{lem}\label{lemday}
  If $\aaa$ and $\bb$ are cross-intersecting and $U,V$ are as above, then the families $S_{U,V}^{lex}(\aaa)$ and $S_{U,V}^{lex}(\bb)$ are cross-intersecting as well.
\end{lem}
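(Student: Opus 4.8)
The plan is to run the standard ``compression preserves cross-intersectingness'' argument, now adapted to two families and to Daykin's larger shift. Write $S:=S_{U,V}^{lex}$; we apply $S$ to \emph{both} families and suppose, for contradiction, that there are $A'\in S(\aaa)$ and $B'\in S(\bb)$ with $A'\cap B'=\emptyset$. After possibly swapping the names of the two families we may assume $(U,V)$ is a valid shift direction for $\aaa$ in the sense of the construction preceding the lemma (the hypothesis $|U|\le|U'|$ serves only to decide which of the two candidate operations is called $S_{U,V}^{lex}$). If $A'\in\aaa$ and $B'\in\bb$, then $A',B'$ already contradict the cross-intersecting property of $\aaa,\bb$. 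If \emph{both} $A'$ and $B'$ are genuinely new, then each of them contains $U$ (here it is essential that the \emph{same} pair $(U,V)$ acts on both families), so $\emptyset\neq U\subseteq A'\cap B'$, impossible. Hence exactly one of them is new; say $A'=(A\setminus V)\cup U$ with $A\in\aaa$, $V\subseteq A$, $A\cap U=\emptyset$, $A'\notin\aaa$, and $B'\in\bb$ (the case ``$B'$ new, $A'$ old'' being symmetric). From $A'\cap B'=\emptyset$ we get $U\cap B'=\emptyset$ and $(A\setminus V)\cap B'=\emptyset$, hence $A\cap B'=V\cap B'=:W$, and $W\neq\emptyset$ since $A$ and $B'$ are cross-intersecting.

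Now split on whether $V\subseteq B'$. If $V\subseteq B'$ (so $W=V$), then $B'$ is $S$-shiftable yet survives in $S(\bb)$, which forces $(B'\setminus V)\cup U\in\bb$; but $A\cap\big((B'\setminus V)\cup U\big)=\emptyset$ (using $A\cap B'=V$ and $A\cap U=\emptyset$), contradicting that $\aaa,\bb$ are cross-intersecting. If $V\not\subseteq B'$, set $t:=|W|$, so $1\le t\le |V|-1$, and let $\tilde U$ be the set of the $t$ smallest elements of $U$; since $U\prec_{lex}V$ gives $\min U<\min V$ and all the sets in sight are disjoint, one checks $\tilde U\prec_{lex}W$. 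Consider $\hat A:=(A\setminus W)\cup\tilde U$. A direct computation gives $\hat A\cap B'=\emptyset$, so $\hat A\in\aaa$ would again contradict cross-intersectingness; and if $\hat A\notin\aaa$, then $(\tilde U,W)$ is a legitimate lexicographic shift direction for $\aaa$ --- witnessed by the member $A$, since $A\cap(\tilde U\cup W)=W$ and $(A\setminus W)\cup\tilde U=\hat A\notin\aaa$ --- with $\tilde U\cup W\subsetneq U\cup V$, contradicting the inclusion-minimality in the choice of $(U,V)$. This covers every case, so no such $A',B'$ exist.

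The only genuinely delicate point is the last case: one has to shrink $(U,V)$ to a pair $(\tilde U,W)$ that is \emph{still} a valid lexicographic shift direction, and the choice ``$\tilde U=$ the smallest elements of $U$'' is dictated by the need $\tilde U\prec_{lex}W$ (this is exactly where $U\prec_{lex}V$ enters), and then one must keep careful track of which of the reshuffled sets --- $\hat A$ here, or its analogue built from $B$ in the symmetric ``$B'$ new'' case --- is the one producing a contradiction with cross-intersectingness versus a contradiction with the minimality of $(U,V)$. Everything else is the word-for-word two-family translation of Daykin's Lemma~\ref{lemday2}.
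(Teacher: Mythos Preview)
Your proof is correct and follows the same compression--contradiction template as the paper: reduce to one genuinely new set, split on whether $V\subseteq B'$, and in the proper-subset case shrink $(U,V)$ to contradict inclusion-minimality. The only (minor) difference is in that last case: the paper replaces $B'$ by $(B'\setminus\tilde V)\cup\tilde U$ and uses the hypothesis $|U|\le|U'|$ to place this set back in $\bb$, whereas you replace $A$ by $\hat A=(A\setminus W)\cup\tilde U$ and appeal to the minimality of $(U,V)$ for $\aaa$ directly --- your variant therefore makes no real use of $|U|\le|U'|$, which is a small tidiness gain.
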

Since Daykin did not state this statement in the above form, we provide the proof. Note the role of $U,V$ being inclusion-minimal. We also note that in the symmetric cross-intersecting setting, the proof becomes slightly easier.
\begin{proof}
  Arguing for contradiction, we may assume by symmetry that there is $A\in S_{U,V}^{lex}(\aaa)\setminus \aaa$ and $B\in S_{U,V}^{lex}(\bb)$ that are disjoint. Now $A\cap (U\cup V)=U$ implies $B\cap U=\emptyset$, in particular $B\in \bb$. Note also that $A\in S_{U,V}^{lex}(\aaa)\setminus \aaa$ implies that $\tilde A:=(A\setminus U)\cup V$ is in $\aaa\setminus S_{U,V}^{lex}(\aaa)$.

  We distinguish two cases.

  (i) $V\subset B$. By definition, $S_{U,V}(B) = (B\setminus V)\cup U\in \bb$ follows. However, $S_{U,V}^{lex}(B)\cap \tilde A=\emptyset$, contradicting the cross-intersection property of $\aaa$ and $\bb$.

  (ii) $V\not \subset B$. Define $\tilde V:=B\cap V$, a proper subset of $V$. Let $\tilde U\subset U$ be the subset of $U$ that consists of $|\tilde V|$ smallest elements of $U$. Then $\tilde U\prec_{lex} \tilde V$.\footnote{Note that this would not necessarily hold for colex.}

  By the  choice of $U$ being inclusion-minimal, $\tilde B:=(B\setminus \tilde V)\cup \tilde U \in \bb$. However, $\tilde B\cap V=\emptyset$, $\tilde A\cap U=\emptyset$ and on $[n]\setminus (U\cup V)$ $\tilde A$ and $\tilde B$ coincide with $A$ and $B$. Consequently, $\tilde A\cap \tilde B=\emptyset$, the final contradiction.
\end{proof}

In the proof of Theorem~\ref{thmmain1}, we shall need the following lemma, resemblant of \cite[Lemma~2]{FT}.

\begin{lem}\label{lemkk} Fix integers $s\ge 2, t\ge 2$ and $m\ge s+t-1$.
\begin{itemize}
\item[(i)] The function $f(x,m,t,s):={x\choose t-1}\cdot\frac{{m-3\choose s-2}}{{m-3\choose t-2}}-{x\choose m-s}$ is monotone increasing for $m-s\le x\le m-3$, and is strictly monotone increasing if additionally $m\ge s+t$.
\item[(ii)]Moreover, if $s\ge 3$ and $f(x,m,t,s)\ge f(m-3,m,t,s)$ for some $x>m-3$, then $f(y,m,t,s)\ge f(m-3,m,t,s)$ for any $m-3\le y\le x$.
\item[(iii)] In particular,  $f(y,m,t,s)\ge f(m-3,m,t,s)$ for any $m-3\le y\le m-2$ and $s\ge 2$.
\end{itemize}
\end{lem}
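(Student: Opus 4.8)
The plan is to reduce everything to two statements about the polynomial $f(x):=f(x,m,t,s)=c\binom{x}{t-1}-\binom{x}{m-s}$, where $c:=\binom{m-3}{s-2}/\binom{m-3}{t-2}$: the exact identity $f(m-2)=f(m-3)$, and strict unimodality of $f$ on $(m-s-1,\infty)$. First I would dispose of degenerate cases. If $m=s+t-1$ then $\binom{m-3}{s-2}=\binom{m-3}{t-2}$, so $c=1$ and $f\equiv0$, and all three claims are trivial. If $s=2$, then (i) is vacuous and (ii) is not asserted, while (iii) follows because on $[m-3,m-2)$ one has $\binom{x}{m-2}=0$, so $f(x)=c\binom{x}{t-1}$ is increasing there, and $f(m-2)=f(m-3)$ by the identity. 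So I assume $m\ge s+t$ and $s\ge3$; then $m-3\ge m-s\ge t$, and on $[m-s,\infty)$ the function $f$ is the genuine polynomial $c\binom{x}{t-1}-\binom{x}{m-s}$ (reading each binomial off its defining product), of degree $m-s$ with negative leading coefficient, so $f(x)\to-\infty$; I work with this polynomial on all of $(m-s-1,\infty)$.

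The identity is a one-liner: by Pascal's rule and $\binom{m-3}{m-s-1}=\binom{m-3}{s-2}$, $f(m-2)-f(m-3)=c\binom{m-3}{t-2}-\binom{m-3}{s-2}=0$.

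For unimodality, for $u>m-s-1$ one has $\binom{u}{k}'=\binom{u}{k}\phi_k(u)$ with $\phi_k(u):=\sum_{i=0}^{k-1}(u-i)^{-1}>0$, so $\binom{u}{t-1}'$ and $\binom{u}{m-s}'$ are positive there and $f'(u)\ge0\iff c\ge\binom{u}{m-s}'/\binom{u}{t-1}'$. Setting $R(u):=\binom{u}{t-1}'/\binom{u}{m-s}'$, so that this condition reads $c\ge1/R(u)$, it is enough to show $R$ is strictly decreasing on $(m-s-1,\infty)$: then $\{u:f'(u)\ge0\}$ is an initial subinterval $(m-s-1,x_0]$, which is unimodality (``$f'>0$ everywhere'' being impossible since $f\to-\infty$ and ``$f'<0$ everywhere'' being impossible since $f(m-2)=f(m-3)$). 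To show $R$ decreases I would differentiate its logarithm. From $\binom{u}{k}'=\binom{u}{k}\phi_k$ one gets $\binom{u}{k}''/\binom{u}{k}'=\phi_k(u)-\psi_k(u)$, where $\psi_k(u):=\big(\sum_{i=0}^{k-1}(u-i)^{-2}\big)/\big(\sum_{i=0}^{k-1}(u-i)^{-1}\big)$ is a weighted average of the numbers $(u-i)^{-1}$, $0\le i\le k-1$, hence $1/u\le\psi_k(u)\le(u-k+1)^{-1}$. Using also $\phi_{m-s}(u)-\phi_{t-1}(u)=\sum_{i=t-1}^{m-s-1}(u-i)^{-1}\ge(u-m+s+1)^{-1}$, we get
\[
\frac{\binom{u}{m-s}''}{\binom{u}{m-s}'}-\frac{\binom{u}{t-1}''}{\binom{u}{t-1}'}=\big(\phi_{m-s}-\phi_{t-1}\big)-\big(\psi_{m-s}-\psi_{t-1}\big)\ \ge\ (u-m+s+1)^{-1}-\Big((u-m+s+1)^{-1}-\tfrac1u\Big)\ =\ \tfrac1u>0,
\]
so $(\ln R)'=\binom{u}{t-1}''/\binom{u}{t-1}'-\binom{u}{m-s}''/\binom{u}{m-s}'<0$, and $R$ is strictly decreasing.

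Finally I would read off the statements. The identity and strict unimodality force $x_0\in(m-3,m-2)$ (otherwise $f$ is strictly monotone on $[m-3,m-2]$, contradicting $f(m-2)=f(m-3)$). Then (i) $f$ is strictly increasing on $[m-s,m-3]\subset(m-s-1,x_0)$; (iii) on $[m-3,m-2]$, $f$ rises to $x_0$ and falls back to $f(m-2)=f(m-3)$, so $f(y)\ge f(m-3)$ throughout; and (ii) on $[m-3,\infty)$ the unimodal $f$ has $\{y\ge m-3:f(y)\ge f(m-3)\}$ equal to an interval $[m-3,X]$, so $f(x)\ge f(m-3)$ with $x>m-3$ yields $x\le X$, whence $f(y)\ge f(m-3)$ for all $y\in[m-3,x]$. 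The one step I expect to require care is the strict monotonicity of $R$: everything else is bookkeeping, but that estimate hinges on the two-sided bound $1/u\le\psi_k(u)\le(u-k+1)^{-1}$ and on not losing strictness in extreme parameter regimes (e.g.\ $t=2$), so I would verify it carefully.
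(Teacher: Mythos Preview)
Your proof is correct, and it takes a genuinely different route from the paper's. Both arguments share the identity $f(m-2)=f(m-3)$ and the reduction of the sign of $f'$ to comparing the ratio $\binom{u}{m-s}'/\binom{u}{t-1}'$ with the constant $c$, but diverge from there. The paper treats the three parts separately: for (i) it bounds the right-hand side of the comparison by an explicit rational function $g$, then shows that $(x-m+s+1)$ times each side is monotone in opposite directions and verifies the inequality at $x=m-3$ via a four-line algebraic simplification; for (ii) it argues by contradiction, positing two critical points $p<q$ with $f(p)>f(m-3)>f(q)$, and rules this out using a second auxiliary function $h$ (the value of $f$ at critical points) that it shows is increasing. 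Your approach instead proves one global fact---strict unimodality of $f$ on $(m-s-1,\infty)$---from which all three parts follow at once, and your proof of unimodality via the logarithmic derivative of $R$ together with the weighted-average bounds $1/u\le\psi_k(u)\le(u-k+1)^{-1}$ is shorter and avoids both the endpoint verification in the paper's (i) and the critical-point juggling in the paper's (ii). The cost is that your argument is slightly less elementary (second derivatives, log-derivatives), whereas the paper stays at the level of first derivatives and explicit algebra; the gain is a cleaner structure and a stronger intermediate statement (unimodality rather than piecewise monotonicity).
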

\begin{proof} For $m=s+t-1$ $f$ is constant as a function of $x$. Thus, (i)--(iii) are obvious in this case. In what follows, we assume that $m\ge s+t$.

(i) 
For $s=2$ there is nothing to prove. Assume that $x\ge m-s$ and $s\ge 3$. We have
$$f'(x,m,t,s)={x\choose t-1}\cdot\frac{{m-3\choose s-2}}{{m-3\choose t-2}}\cdot\sum_{i=0}^{t-2}\frac 1{x-i}-{x\choose m-s}\cdot \sum_{i=0}^{m-s-1}\frac 1 {x-i}.$$
Thus, proving $f'(x,m,t,s)> 0$ is equivalent to proving
\begin{equation}\label{eqkk1}
  \frac{{x\choose t-1}{m-3\choose s-2}}{{x\choose m-s}{m-3\choose t-2}}> 1+\frac{\sum_{i=t-1}^{m-s-1}\frac 1 {x-i}}{\sum_{i=0}^{t-2}\frac 1{x-i}}.
\end{equation}
The right hand side of \eqref{eqkk1} is strictly smaller than
$$g(x,m,t,s):=1+\frac{(m-s-t+1)/(x-(m-s-1))}{(t-1)/x}=1+\frac{(m-s-t+1)x}{(x-m+s+1)(t-1)}.$$
It is clear that $g(x,m,t,s)\cdot (x-m+s+1)$ is monotone increasing as $x$ increases. On the other hand,
$$\frac{{x\choose t-1}{m-3\choose s-2}}{{x\choose m-s}{m-3\choose t-2}}\cdot (x-m+s+1)=\frac{{m-3\choose s-2}}{{m-3\choose t-2}}\cdot\frac{(m-s)(m-s-1)\ldots t}{(x-t+2)(x-t+1)\ldots(x-m+s+2)},$$
and thus the right hand side of \eqref{eqkk1} times $(x-m+s+1)$ is monotone decreasing as $x$ grows. Thus, in order to verify \eqref{eqkk1} for any $m-s\le x\le m-3$, it is sufficient to show that
$$\frac{{x\choose t-1}{m-3\choose s-2}}{{x\choose m-s}{m-3\choose t-2}}\ge 1+\frac{(m-s-t+1)x}{(x-m+s+1)(t-1)}$$
for $x=m-3$. Substituting the value of $x$ and rewriting, we obtain the inequality
\begin{align*}
   & \frac{{m-3\choose t-1}{m-3\choose s-2}}{{m-3\choose s-3}{m-3\choose t-2}}-\frac{(m-s-t+1)(m-3)}{(s-2)(t-1)}\ge 1 \\
  \Leftrightarrow & \frac{(m-t-1)(m-s)}{(s-2)(t-1)}-\frac{((m-s)-(t-1))((m-t-1)+(t-2))}{(s-2)(t-1)}\ge1 \\
  \Leftrightarrow &   \frac{(t-1)((m-t-1)+(t-2))-(m-s)(t-2)}{(s-2)(t-1)}\ge1
  \\
  \Leftrightarrow & \frac{(t-1)(s-3)+(m-s)}{(s-2)(t-1)}\ge1.
\end{align*}
The last inequality clearly holds, since $m-s\ge t-1$.

(ii) Assume that this is not the case. Then there are $p,q$ such that $m-3<p<q\le y$, $f(q,m,t,s)<f(m-3,m,t,s)<f(p,m,t,s)$ and $f'(q,m,t,s)=f'(p,m,t,s)=0$. Whenever $f'(z,m,t,s)=0$, we have
$${z\choose t-1}\cdot\frac{{m-3\choose s-2}}{{m-3\choose t-2}}={z\choose m-s}\cdot \frac{\sum_{i=0}^{m-s-1}\frac 1 {z-i}}{\sum_{i=0}^{t-2}\frac 1{z-i}},$$
and thus $f(z,m,t,s)=h(z,m,t,s)$, where
$$h(z,m,t,s) = {z\choose m-s}\cdot \frac{\sum_{i=t-1}^{m-s-1}\frac 1 {z-i}}{\sum_{i=0}^{t-2}\frac 1{z-i}}.$$
It is easy to see that $h(z,m,t,s)$ is a monotone increasing function of $z$,\footnote{in fact, even $h(z,m,t,s)\cdot \sum_{i=0}^{t-2}\frac 1{z-i}$ is monotone increasing} and thus  $f(m-3,m,t,s)< f(p,m,t,s)=h(p,m,t,s)\le h(q,m,t,s)=f(q,m,t,s)$, contradicting our assumption.

(iii) In view of (ii), for $s\ge 3$ it is sufficient to verify that $f(m-3,m,t,s)\le f(m-2,m,t,s)$. We have
$$f(m-2,m,t,s)-f(m-3,m,t,s) = {m-3\choose t-2}\cdot \frac{{m-3\choose s-2}}{{m-3\choose t-2}}-{m-3\choose m-s-1} =0.$$
For $s=2$ and $y<m-2$ this is obvious. The case $s=2,y=m-2$ is a straightforward computation.
\end{proof}

The following theorem was proven by Frankl, Lee, Siggers and Tokushige \cite{FLST}.
\begin{thm}[\cite{FLST}]\label{thmflst} For every $k\ge t\ge 14$ and $n\ge (t+1)k$ we have the following. If  $\aaa,\bb\subset {[n]\choose k}$ are cross $t$-intersecting, then $|\aaa||\bb|\le {n-t\choose k-t}^2$.
\end{thm}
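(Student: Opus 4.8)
The plan is to follow the standard two-step route for product bounds on cross-intersecting families: first reduce to shifted families, then prove the bound in the shifted case. For the reduction, note that applying a simultaneous $(i,j)$-shift $S_{ij}$ to both $\aaa$ and $\bb$ leaves $|\aaa|$ and $|\bb|$ unchanged and preserves the cross $t$-intersecting property; the latter is the two-family analogue of the fact (recorded for a single family in Section~\ref{sec2}) that shifts preserve $t$-intersection, and can be proved by the same inclusion-minimality argument used in the proof of Lemma~\ref{lemday}. Iterating until no shift changes either family, we may assume both $\aaa$ and $\bb$ are shifted, so it suffices to bound $|\aaa||\bb|$ under this extra hypothesis.

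For the main step, argue by contradiction: suppose $|\aaa||\bb|>{n-t\choose k-t}^2$ and, by symmetry, $|\aaa|\ge|\bb|$, so $|\aaa|>{n-t\choose k-t}=\big|\{F\in{[n]\choose k}:[t]\subseteq F\}\big|$. Since $\aaa$ is shifted and strictly larger than the star $\{F:[t]\subseteq F\}$, it must contain sets $A$ with $|A\cap[t]|<t$, and the larger $|\aaa|$ is the more such sets it must contain (being shifted forces it to contain all the shift-minimal ones first); each such $A$, combined with shiftedness of $\bb$, forces $\bb$ to be small. I would formalise this as a \emph{trade-off lemma}: for shifted cross $t$-intersecting $\aaa,\bb$ the pair $\big(|\aaa|,|\bb|\big)$ lies below the hyperbola $xy={n-t\choose k-t}^2$, proved by induction on $n$ (and, inside it, on $k$). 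The induction step splits $\aaa=\aaa(1)\sqcup\aaa(\bar 1)$ and $\bb=\bb(1)\sqcup\bb(\bar 1)$: here $\aaa(\bar 1),\bb(\bar 1)\subseteq{[2,n]\choose k}$ are cross $t$-intersecting, $\aaa(1),\bb(1)\subseteq{[2,n]\choose k-1}$ are cross $(t-1)$-intersecting, and $\aaa(\bar 1)$ with $\bb(1)$ as well as $\aaa(1)$ with $\bb(\bar 1)$ are cross $t$-intersecting across the two uniformities; expanding $|\aaa||\bb|$ into the four products $|\aaa(\bar1)||\bb(\bar1)|$, $|\aaa(\bar1)||\bb(1)|$, $|\aaa(1)||\bb(\bar1)|$, $|\aaa(1)||\bb(1)|$, bounding each by the appropriate inductive estimate, and optimising over the four sizes (using shiftedness, which links $|\aaa(1)|$ with $|\aaa(\bar1)|$) should close the argument. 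The base cases — $k=t$, where $\aaa,\bb\subseteq\{[k]\}$, and the smallest admissible $n$ — are immediate, and the hypothesis $n\ge(t+1)k$ is precisely what makes the star the unique optimum at every level of the recursion.

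The main obstacle is the trade-off lemma, and specifically making the four-term expansion tight: one must compare binomial coefficients of the form ${m\choose k}$ against ${m-1\choose k-1}$ with enough precision not to lose constant factors, and one needs product bounds for cross $t$-intersecting pairs of \emph{different} uniformities $\big({[m]\choose k},{[m]\choose k-1}\big)$, which have to be folded into the same induction. This is the technical core of \cite{FLST}, and it is where the somewhat artificial lower bound $t\ge14$ enters — it is the threshold that keeps all the required binomial-ratio estimates (of the flavour of Lemma~\ref{lemkk}) valid. An alternative would be a spectral approach: view a cross $t$-intersecting pair as a bi-independent set in the graph on ${[n]\choose k}$ joining sets with intersection less than $t$, apply a Hoffman-type ratio bound in the Johnson scheme, and carry out the corresponding equality analysis; this yields the clean constant quickly, but characterising the extremal pair is again delicate.
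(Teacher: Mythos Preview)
This theorem is not proved in the present paper at all: it is quoted verbatim from \cite{FLST} and used as a black box (via Corollary~\ref{corflst}) in the proof of Theorem~\ref{thmmain2}. There is therefore no ``paper's own proof'' to compare your proposal against.

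That said, your sketch is a fair outline of the strategy actually carried out in \cite{FLST}: reduce to shifted families, then run an induction on $n$ that splits each family according to containment of the element $1$ and controls the resulting cross products. You are also right that the condition $t\ge 14$ arises from the binomial-coefficient estimates needed to make the four-term expansion close, and that this is the technical heart of the argument. What your write-up does not yet contain is any of that actual work: the ``trade-off lemma'' is asserted but not proved, the mixed-uniformity cross $t$-intersecting bounds are named but not established, and ``optimising over the four sizes'' hides the entire difficulty. As a plan it is accurate; as a proof it is a table of contents. If you were asked to supply a proof for this paper, the honest answer is that none is given here and the reader is referred to \cite{FLST}.
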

We can derive the following corollary for $t<14$.
\begin{cor}\label{corflst} Fix some  $k\ge t>0$ and an integer $\alpha$. If the statement of Theorem~\ref{thmflst} is valid for $k+\alpha,t+\alpha$ and $n+\alpha\ge n_0+\alpha$ playing the roles of $k,t,n$, then it is valid for $k,t$ and $n\ge n_0$. In particular, Theorem~\ref{thmflst} is valid for any $k\ge t\ge 1$ and $n\ge \max\{15k, (t+1)k\}$.
\end{cor}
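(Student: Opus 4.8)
\textbf{Proof proposal for Corollary~\ref{corflst}.}

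The plan is to prove the ``lifting'' claim first and then instantiate it. Suppose that Theorem~\ref{thmflst} (with the same numerical conclusion $|\aaa||\bb|\le{n-t\choose k-t}^2$) is known for the parameters $k+\alpha$, $t+\alpha$, $n+\alpha$ whenever $n+\alpha\ge n_0+\alpha$, i.e.\ whenever $n\ge n_0$. Given cross $t$-intersecting families $\aaa,\bb\subset{[n]\choose k}$ with $n\ge n_0$, I would introduce $\alpha$ brand-new ground-set elements, say $Z=\{n+1,\ldots,n+\alpha\}$, disjoint from $[n]$, and define the ``blown-up'' families $\aaa':=\{A\cup Z: A\in\aaa\}$ and $\bb':=\{B\cup Z: B\in\bb\}$, both living in ${[n+\alpha]\choose k+\alpha}$. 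Since $A\cap B\supset$ (nothing is lost) and in fact $(A\cup Z)\cap(B\cup Z)=(A\cap B)\cup Z$, the families $\aaa',\bb'$ are cross $(t+\alpha)$-intersecting precisely when $\aaa,\bb$ are cross $t$-intersecting. Applying the assumed validity of Theorem~\ref{thmflst} for the shifted parameters, and using that $|\aaa'|=|\aaa|$, $|\bb'|=|\bb|$, and ${(n+\alpha)-(t+\alpha)\choose (k+\alpha)-(t+\alpha)}={n-t\choose k-t}$, I get $|\aaa||\bb|\le{n-t\choose k-t}^2$, which is exactly the conclusion for $k,t,n$.

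There is one thing to watch: the hypothesis of Theorem~\ref{thmflst} requires $k+\alpha\ge t+\alpha\ge 14$, which is automatic once $t+\alpha\ge 14$ and $k\ge t$, and also requires $n+\alpha\ge (t+\alpha+1)(k+\alpha)$, so when I instantiate I must choose $\alpha$ large enough that both $t+\alpha\ge 14$ and the lower bound on $n+\alpha$ are implied by my hypothesis on $n$. For the stated application, given $k\ge t\ge 1$, take $\alpha:=14-t$ (so $t+\alpha=14$ and $k+\alpha=k+14-t\le k+13$); then the known case applies provided $n+\alpha\ge 15(k+\alpha)=15(k+14-t)$, i.e.\ $n\ge 15k+14\cdot 14-15t-(14-t)=15k+196-14t-14=15k+182-14t$. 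Since $t\ge1$ this is at most $15k+168$; I would instead note $15(k+14-t)-(14-t)=15k+(14)(14-t)+(t)\le 15k$ is \emph{not} quite right, so more carefully: the cleanest choice is to keep $\alpha$ as small as possible, and simply verify that the bound ``$n\ge\max\{15k,(t+1)k\}$'' in the statement forces $n+\alpha\ge\max\{15,(t+\alpha+1)\}(k+\alpha)$ after plugging $\alpha=\max\{0,14-t\}$; this is a short monotonicity check since increasing both $k$ and $t$ by $\alpha$ while increasing $n$ by only $\alpha$ preserves the inequality $n\ge(t+1)k$ as long as $k\ge1$ (because $(t+1)k$ grows by $\alpha(k+t+\alpha+1)\ge\alpha$), and similarly for the ``$15k$'' branch once $t+\alpha=14$ makes the ``$15$'' the active constant.

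The main (and essentially only) obstacle is bookkeeping: checking that the chosen $\alpha$ really makes the hypotheses of the assumed case follow from the hypotheses we are given, uniformly in $k,t$ with $k\ge t$. The combinatorial heart—that padding by a common block of size $\alpha$ is a bijection on each family that turns cross $t$-intersecting into cross $(t+\alpha)$-intersecting and leaves the target bound ${n-t\choose k-t}$ invariant—is immediate. I would therefore spend the write-up on (a) the bijection and the intersection-size identity $(A\cup Z)\cap(B\cup Z)=(A\cap B)\cup Z$, and (b) the elementary verification that $n\ge\max\{15k,(t+1)k\}$ implies $n+\alpha\ge\max\{15(k+\alpha),(t+\alpha+1)(k+\alpha)\}$ for $\alpha=\max\{0,14-t\}$, which reduces to the two monotonicity observations above. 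No deeper input than Theorem~\ref{thmflst} itself is needed.
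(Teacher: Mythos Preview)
Your lifting argument---padding each set by a common block $Z=[n+1,n+\alpha]$ to turn cross $t$-intersecting families in ${[n]\choose k}$ into cross $(t+\alpha)$-intersecting families in ${[n+\alpha]\choose k+\alpha}$, using $(A\cup Z)\cap(B\cup Z)=(A\cap B)\cup Z$ and ${(n+\alpha)-(t+\alpha)\choose(k+\alpha)-(t+\alpha)}={n-t\choose k-t}$---is exactly the paper's proof of the first assertion.

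For the ``in particular'' clause, however, your monotonicity check is backward. You write that increasing $k$ and $t$ by $\alpha$ while increasing $n$ by only $\alpha$ ``preserves the inequality $n\ge(t+1)k$ \ldots\ because $(t+1)k$ grows by $\alpha(k+t+\alpha+1)\ge\alpha$''. But if the right side grows by \emph{at least} $\alpha$ while the left grows by \emph{exactly} $\alpha$, the inequality becomes harder to satisfy, not easier. Concretely, with $\alpha=14-t$ (for $1\le t\le 13$) the hypothesis of Theorem~\ref{thmflst} after padding reads $n+\alpha\ge 15(k+\alpha)$, i.e.\ $n\ge 15k+14(14-t)$, which is strictly stronger than $n\ge 15k$; and no larger $\alpha$ helps, since $(t+\alpha+1)(k+\alpha)-\alpha$ is increasing in $\alpha$. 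Thus the stated bound $n\ge\max\{15k,(t+1)k\}$ does \emph{not} place $(k+\alpha,t+\alpha,n+\alpha)$ inside the range of Theorem~\ref{thmflst}. The paper itself simply asserts that the second part ``follows directly'' without carrying out this check, so your difficulty here is with the statement rather than with a missing idea: as written, the ``in particular'' clause appears to need an extra additive constant on $n$ that neither your argument nor the paper's supplies.
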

\begin{proof} Given cross $t$-intersecting families $\aaa,\bb\subset{[n]\choose k}$, we obtain families  $\aaa',\bb'\subset{[n+\alpha]\choose k+\alpha}$, where $\aaa':=\{A\cup [n+1,n+\alpha]: A\in \aaa\}$ and $\bb'$ is defined similarly. Then, clearly, $\aaa',\bb'$ are cross $(t+\alpha)$-intersecting, and thus $|\aaa'||\bb'|\le {(n+\alpha)-(t+\alpha)\choose (k+\alpha)-(t+\alpha)}^2 = {n-t\choose k-t}^2$ since the conditions of Theorem~\ref{thmflst} hold for $\aaa', \bb'$. Since $|\aaa|=|\aaa'|$ and $|\bb|=|\bb'|$, the conclusion of the theorem is valid for $\aaa,\bb$ as well. The second part of the statement follows directly from the first one and Theorem~\ref{thmflst}.
\end{proof}

\section{Cross-intersecting families. Proof of Theorem~\ref{thmmain1}}\label{sec5}

   Take $u,v$ and $\aaa,\ \bb$ as in the statement.
The proof of the theorem consists of the following two lemmas. The first one handles the case when the diversities of the families $\aaa,\bb$ are not too large.
  \begin{lem}\label{lemproof1}
        Theorem~\ref{thmmain1} is valid, provided that additionally $\gamma(\aaa)={n-u'-1\choose n-a-1}$ and $\gamma(\bb) = {n-v'-1\choose n-b-1}$ for some $u', v'\ge 2$.
  \end{lem}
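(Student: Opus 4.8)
The plan is to split $\aaa$ and $\bb$ at an element of maximum degree, observe that the resulting links are cross-intersecting, use the Kruskal--Katona corollary to bound their sizes, and finally reduce the numerical comparison to the monotonicity of the auxiliary function $f$ from Lemma~\ref{lemkk}.

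After relabelling, let $1$ be an element of maximum degree in $\aaa$, so that $|\aaa(\bar1)|=\gamma(\aaa)={n-u'-1\choose n-a-1}$ and $|\aaa(1)|=|\aaa|-\gamma(\aaa)$, and let $j$ be one of maximum degree in $\bb$, so $|\bb(\bar j)|=\gamma(\bb)={n-v'-1\choose n-b-1}$. For any $i$, the families $\aaa(\bar i)$ and $\bb(i)$ are cross-intersecting inside $2^{[n]\setminus\{i\}}$ (a set in $\aaa$ not containing $i$ must meet a set in $\bb$ in a coordinate $\ne i$), and so are $\aaa(i),\bb(\bar i)$ and $\aaa(\bar i),\bb(\bar i)$. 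Applying Corollary~\ref{corkk} on the ground set $[n]\setminus\{j\}$ to the pair $\big(\aaa(\bar j),\bb(j)\big)$, with $|\aaa(\bar j)|\ge\gamma(\aaa)={n-u'-1\choose(n-1)-a}$, gives $|\bb(j)|\le{n-1\choose b-1}-{n-u'-1\choose b-1}$, whence
$$|\bb|=|\bb(j)|+|\bb(\bar j)|\le{n-1\choose b-1}-{n-u'-1\choose b-1}+{n-v'-1\choose n-b-1}=|\mathcal L_{v',u'}(n,b)|.$$
Running the same argument with the roles of $\aaa$ and $\bb$ interchanged (splitting $\aaa$ at the element $1$ and using $|\bb(\bar1)|\ge\gamma(\bb)$) yields $|\aaa|\le|\mathcal L_{u',v'}(n,a)|$. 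The upshot is that each family is no larger than the $\mathcal L$-family built from the two actual diversities.

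Combining these with the hypothesised lower bounds $|\aaa|\ge|\mathcal L_{u,v}(n,a)|$ and $|\bb|\ge|\mathcal L_{v,u}(n,b)|$ gives
$$|\mathcal L_{u,v}(n,a)|\le|\mathcal L_{u',v'}(n,a)|,\qquad |\mathcal L_{v,u}(n,b)|\le|\mathcal L_{v',u'}(n,b)|,$$
with at least one inequality strict. Since $|\mathcal L_{u,v}(n,k)|={n-1\choose k-1}-{n-v-1\choose k-1}+{n-u-1\choose n-k-1}$, these are two inequalities comparing quantities of the form ${n-w-1\choose k-1}$ and ${n-w-1\choose n-k-1}$. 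Now Lemma~\ref{lemkk} is exactly the tool to read them: taking $m=n$ and $(t,s)\in\{(b,a+1),(a,b+1)\}$, the function $f(x,m,t,s)={x\choose t-1}\cdot\frac{{m-3\choose s-2}}{{m-3\choose t-2}}-{x\choose m-s}$ is a monotone (strictly so, as $n>a+b$) combination of ${x\choose b-1}$ with ${x\choose n-a-1}$, and of ${x\choose a-1}$ with ${x\choose n-b-1}$, on the relevant range $n-a-1\le x\le n-3$ --- the right endpoint being the reason $u',v'\ge2$ is imposed. Plugging $x=n-u-1,\,n-u'-1$ and $x=n-v-1,\,n-v'-1$ into these statements and comparing with the two displayed inequalities forces first $u'\ge u$ and $v'\ge v$, and then --- because strict monotonicity of $f$ turns the strict inequality among the two size inequalities into a strict inequality among parameters --- rules out $u'=u$ and $v'=v$. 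Hence $\gamma(\aaa)={n-u'-1\choose n-a-1}<{n-u-1\choose n-a-1}$ and $\gamma(\bb)<{n-v-1\choose n-b-1}$, as desired. The degenerate situations $n=a+b$, $\gamma(\aaa)=0$ or $\gamma(\bb)=0$, and $u=a$ or $v=b$ (where some binomial coefficients or ranges collapse) are dealt with by hand.

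Finally, to see that $\aaa$ and $\bb$ share the same, unique, maximum-degree element: once we know $\gamma(\aaa)<{n-3\choose n-a-1}$ and $\gamma(\bb)<{n-3\choose n-b-1}$, both families are highly concentrated. Splitting $\aaa$ along a pair $\{1,i\}$ and bounding $|\aaa(\bar1)|,|\aaa(\bar i)|\le\gamma(\aaa)$ shows that the maximum-degree element of $\aaa$ is unique, and likewise for $\bb$; and if those elements were distinct, say $1$ for $\aaa$ and $j$ for $\bb$, then the now large families $\aaa(\bar j)$ and $\bb(\bar1)$, projected onto $[n]\setminus\{1,j\}$, would be cross-intersecting and too big, which one rules out by Lemma~\ref{lemcorrshift} after shifting within $[n]\setminus\{1,j\}$ (or by a direct double count). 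I expect the third paragraph to be the main obstacle: the bookkeeping with the binomial identities and, especially, keeping every parameter inside the window where Lemma~\ref{lemkk} applies and the strictness can be extracted; the common-element claim should be comparatively painless.
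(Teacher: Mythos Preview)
Your overall approach matches the paper's: split at the max-degree element, bound the links via Corollary~\ref{corkk}, and compare the resulting inequalities using the monotonicity of $f$ from Lemma~\ref{lemkk}. Your derivation of the two bounds $|\aaa|\le|\mathcal L_{u',v'}(n,a)|$ and $|\bb|\le|\mathcal L_{v',u'}(n,b)|$ is actually cleaner than the paper's (which splits both families at the \emph{same} element $1$ and has to live with $|\bb(\bar 1)|\ge\gamma(\bb)$ rather than equality).

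The genuine gap is in your third paragraph, exactly where you anticipated. Your two inequalities each mix a term in $u,u'$ with a term in $v,v'$, so ``plugging $x=n-u-1,n-u'-1$'' into a single monotone $f$ does not read them off directly. The paper's key move is to take the specific weighted sum
\[
E=\Big(\tbinom{n-1}{a-1}-|\aaa|\Big)+\frac{\binom{n-4}{a-2}}{\binom{n-4}{b-2}}\Big(\tbinom{n-1}{b-1}-|\bb|\Big),
\]
which then splits as $\frac{\binom{n-4}{a-2}}{\binom{n-4}{b-2}}\,f(n-v'-1,n-1,a,b)+f(n-u'-1,n-1,b,a)$, one summand depending only on $v'$ and the other only on $u'$. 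This decoupling works precisely because the two constants $\frac{\binom{n-4}{b-2}}{\binom{n-4}{a-2}}$ and $\frac{\binom{n-4}{a-2}}{\binom{n-4}{b-2}}$ are reciprocals. Your choice $m=n$, $(t,s)\in\{(b,a+1),(a,b+1)\}$ produces constants $\frac{\binom{n-3}{a-1}}{\binom{n-3}{b-2}}$ and $\frac{\binom{n-3}{b-1}}{\binom{n-3}{a-2}}$, whose product is $\frac{(n-a-1)(n-b-1)}{(a-1)(b-1)}>1$ for $n>a+b$; with these, no linear combination of your two inequalities separates into two one-variable $f$-differences, and the chain of inequalities does not close. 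The fix is simply to take $m=n-1$ and $(t,s)\in\{(a,b),(b,a)\}$, and then form $E$ explicitly.

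For the common-element claim, the paper does not go through shifting or Lemma~\ref{lemcorrshift}. Having shown (with $1$ the max-degree element of $\aaa$) that $|\aaa(\bar 1)|<\binom{n-u-1}{n-a-1}$, it simply re-reads the Kruskal--Katona bounds already obtained at $1$ to conclude $|\bb(\bar 1)|<\binom{n-v-1}{n-b-1}$ as well, so $1$ is the unique max-degree element in both families.
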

   \begin{proof} Recall that we put ${x\choose k}:=0$ for $x<k$.
Arguing indirectly, assume that $u'\le u.$   W.l.o.g., suppose that 
element $1$ has the largest degree in $\aaa$.  Clearly, $\aaa(1)$ and $\bb(\bar 1)$, as well as $\bb(1)$ and $\aaa(\bar 1)$, are cross-intersecting.

By Corollary~\ref{corkk} we conclude
that $|\bb(1)|\le {n-1\choose b-1}-{n-u'-1\choose b-1}$. Since $|\bb|\ge {n-1\choose b-1}-{n-u-1\choose b-1}+{n-v-1\choose n-b-1}$, we have $v'\le v$. Thus, diversity of $\bb$ is non-zero and we similarly have $|\aaa(1)|\le {n-1\choose a-1}-{n-v'-1\choose a-1}$.

Let us consider the following expression:
$$E:=\Big({n-1\choose a-1}-|\aaa|\Big)+\frac{{n-4\choose a-2}}{{n-4\choose b-2}}\Big({n-1\choose b-1}-|\bb|\Big).$$
On the one hand, by our assumptions on $|\aaa|$ and $|\bb|$,
\begin{equation}\label{eqf1} E< \Big({n-v-1\choose a-1}-{n-u-1\choose n-a-1}\Big)+\frac{{n-4\choose a-2}}{{n-4\choose b-2}}\Big({n-u-1\choose b-1}-{n-v-1\choose n-b-1}\Big).\end{equation}
On the other hand, due to the bounds on $|\aaa(1)|$ and $|\bb(1)|$ above, we have
\begin{small}\begin{align*}E\ \ge &\ \Big({n-v'-1\choose a-1}-\frac{{n-4\choose a-2}}{{n-4\choose b-2}}{n-v'-1\choose n-b-1}\Big)+\Big(\frac{{n-4\choose a-2}}{{n-4\choose b-2}}{n-u'-1\choose b-1} - {n-u'-1\choose n-a-1}\Big)\\
=&\ \frac{{n-4\choose a-2}}{{n-4\choose b-2}}f(n-v'-1,n-1,a,b)+f(n-u'-1,n-1,b,a),
\end{align*}\end{small}
where the function $f$ is as in Lemma~\ref{lemkk}. We have shown in Lemma~\ref{lemkk} that $f(x,m,t,s)$  increases as $x$ increases, as long as $m-s\le x\le m-3$, and that $f(x,m,t,s)\ge f(m-3,m,t,s)$ for $m-3\le x\le m-2$. This, combined with the assumption $2\le v'\le v$, and $2\le u'\le u$, allows us to conclude
\begin{align}E\ \ge\ & \frac{{n-4\choose a-2}}{{n-4\choose b-2}}f(n-v-1,n-1,a,b)+f(n-u-1,n-1,b,a) \notag\\
\label{eqf2} =\ & \Big({n-v-1\choose a-1}-{n-u-1\choose n-a-1}\Big)+\frac{{n-4\choose a-2}}{{n-4\choose b-2}}\Big({n-u-1\choose b-1}-{n-v-1\choose n-b-1}\Big).\end{align}
Inequalities \eqref{eqf1} and \eqref{eqf2} contradict each other, which implies that our assumption on $u'$ was wrong, and thus $\gamma(\aaa)<{n-u-1\choose n-a-1}$. The same holds for $\bb$. Finally, note that we have actually shown that $|\aaa(\bar 1)|< {n-u-1\choose a-1}$, and thus $|\bb(\bar 1)|< {n-v-1\choose b-1}$, which implies that $1$ has the largest degree in both families.
\end{proof}

The second lemma allows us to reduce the case of (at least) one of the families having large diversity to the case handled in Lemma~\ref{lemproof1}.

\begin{lem}\label{lemproof2} Given $\aaa$, $\bb$ satisfying the conditions of Theorem~\ref{thmmain1}, there exist cross-intersecting families $\aaa',$ $\bb'$ with $\gamma(\aaa')\le {n-3\choose a-2}$, $\gamma(\bb)\le {n-3\choose b-2}$ and such that $|\aaa'|\ge |\aaa|$, $|\bb'|\ge |\bb|$. Moreover, either $\gamma(\aaa')\ge\min\{{n-u-1\choose n-a-1},\gamma(\aaa)\}$ or $\gamma(\bb')\ge \min\{{n-v-1\choose n-b-1},\gamma(\bb)\}$.
\end{lem}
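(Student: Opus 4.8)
The plan is to use the Daykin $S^{lex}_{U,V}$-shifts introduced in Section~\ref{sec3} to gradually reduce the diversity of $\aaa$ and $\bb$ while keeping them cross-intersecting and keeping $|\aaa|,|\bb|$ fixed. Recall that by Corollary~\ref{corkk2} one may first replace $\aaa,\bb$ by the initial segments $\mathcal L([n],|\aaa|,a)$, $\mathcal L([n],|\bb|,b)$ in lexicographic order; these are cross-intersecting, so without loss of generality we work with a sequence of pairs of families obtained from these by repeatedly applying operations $S^{lex}_{U,V}$ chosen as in the paragraph preceding Lemma~\ref{lemday}. Each such operation keeps the families cross-intersecting (Lemma~\ref{lemday}) and keeps the cardinalities unchanged. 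First I would observe that once both families are initial segments in the lex order, they are in particular shifted, so by Proposition~\ref{prop24}(ii) the diversity of each is $|\ff|-d_\ff(1)$, i.e. the number of sets avoiding $1$; and a lex-initial segment $\mathcal L([n],|\aaa|,a)$ with $|\aaa|\le \binom{n-1}{a-1}$ has diversity $0$. So the real issue is that $|\aaa|$ (resp. $|\bb|$) may exceed $\binom{n-1}{a-1}$ (resp. $\binom{n-1}{b-1}$), in which case the diversity equals $|\aaa|-\binom{n-1}{a-1}$ (resp. $|\bb|-\binom{n-1}{b-1}$), which could be much larger than $\binom{n-u-1}{n-a-1}$.

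The core of the argument is a \emph{continuity/discrete-intermediate-value} step. I would track, along the sequence of Daykin shifts, the quantity $\big(\gamma(\aaa'),\gamma(\bb')\big)$, and show two things: (a) each individual $S^{lex}_{U,V}$-shift changes $\gamma(\aaa')$ and $\gamma(\bb')$ by a controlled (``small'') amount — here is where the analogue of Proposition~\ref{prop55}(iii) for Daykin shifts, together with the inclusion-minimality of $(U,V)$, should give that one step cannot drop $\gamma(\aaa')$ from above $\binom{n-u-1}{n-a-1}$ to below $\binom{n-u'-1}{n-a-1}$ for the relevant threshold $u'\ge2$ in one jump; and (b) the process terminates with both families being lex-initial segments, for which $\gamma(\aaa')=\max\{0,|\aaa|-\binom{n-1}{a-1}\}$ is small (in fact $\le \binom{n-3}{a-2}$, since $|\aaa|$ is bounded via the hypotheses of Theorem~\ref{thmmain1} by something of the form $\binom{n-1}{a-1}+\binom{n-3}{a-2}$-ish; this numerical bound I would extract from $u\le a$, $v\le b$ and the monotonicity facts in Lemma~\ref{lemkk}). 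Combining (a) and (b): if at the start $\gamma(\aaa)$ and $\gamma(\bb)$ are both large, then as we shift, the first moment one of the two diversities drops below its threshold $\binom{n-u-1}{n-a-1}$ or $\binom{n-v-1}{n-b-1}$, the \emph{other} is still $\ge$ its threshold (by (a), the step that caused the drop was small, and before the step both were large), and we can stop there; if neither ever drops below threshold, we run all the way to lex-initial segments and both diversities are automatically $\le\binom{n-3}{a-2}$, $\le\binom{n-3}{b-2}$ and the ``moreover'' clause is vacuously arranged by the fact that the final diversities are $\ge\min\{\dots,\gamma(\aaa)\}$ only if $\gamma(\aaa)$ was already $\le\binom{n-3}{a-2}$ — so actually I must be slightly more careful and stop \emph{as soon as} $\gamma(\aaa')\le\binom{n-3}{a-2}$ and $\gamma(\bb')\le\binom{n-3}{b-2}$ both hold, guaranteeing via (a) that at that point at least one of the two diversities is still $\ge$ its respective $u,v$-threshold (unless it was below it to begin with, in which case $\min$ takes care of it).

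The technical heart — and the step I expect to be the main obstacle — is establishing (a): a quantitative bound on how much a single $S^{lex}_{U,V}$-shift can decrease $\gamma$. The naive bound from an analogue of Proposition~\ref{prop55}(iii) involves $|\aaa(\bar 1 \text{-side})\oplus\aaa(1\text{-side})|$ type quantities relative to the sets $U,V$, and one needs the inclusion-minimality of $(U,V)$ (exactly as used in the proof of Lemma~\ref{lemday}) to argue that the ``traded'' sets form a structured, small family — roughly, that the symmetric difference between consecutive families in the sequence is contained in a shadow-like object whose size is comparable to the \emph{gap} between the current diversity and the next threshold. I would model this on the analogous estimates in \cite{KZ}, where normal shifts (plus another exchange operation) were used for the same purpose; the adaptation to Daykin shifts is where the real work lies, and I would isolate it as a sub-claim: \emph{if $\gamma(\aaa')\ge\binom{m-1}{n-a-1}$ before a $S^{lex}_{U,V}$-step then $\gamma$ of the result is $\ge\binom{m-2}{n-a-1}$}, or an inequality of that flavour, which is exactly what makes the discrete intermediate-value argument go through. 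Finally, I would verify the bookkeeping: that throughout, $|\aaa'|\ge|\aaa|$ and $|\bb'|\ge|\bb|$ (here ``$\ge$'' rather than ``$=$'' because passing to a lex-initial segment of the \emph{same} size at the start and then shifting preserves size exactly — so in fact equality holds, which is even stronger than needed), and that the monotonicity in Lemma~\ref{lemkk} is what licenses replacing the $u,v$-thresholds by the $u',v'$-thresholds when feeding the output into Lemma~\ref{lemproof1}.
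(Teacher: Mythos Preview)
Your high-level picture --- drive the pair toward lex-initial segments via Daykin-type shifts, track diversity along the way, and stop when both diversities first fall below $\binom{n-3}{a-2},\binom{n-3}{b-2}$ --- matches the paper's strategy. But there are two genuine gaps.

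First, the opening move is confused: if you \emph{replace} $\aaa,\bb$ by $\mathcal L([n],|\aaa|,a),\mathcal L([n],|\bb|,b)$ via Corollary~\ref{corkk2}, you are already at the endpoint and there is no sequence of shifts left to run; worse, you have discarded all information about $\gamma(\aaa),\gamma(\bb)$, so the ``moreover'' clause cannot be recovered. Also, the hypotheses of Theorem~\ref{thmmain1} give only \emph{lower} bounds on $|\aaa|,|\bb|$, so your claim that ``$|\aaa|$ is bounded via the hypotheses by $\binom{n-1}{a-1}+\binom{n-3}{a-2}$-ish'' is unfounded; the terminal diversity of a lex segment need not be small a priori.

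Second, and more seriously, your step~(a) fails as stated. A Daykin shift with $|U|=1$ is an ordinary $i\leftarrow j$ shift and can move up to $\binom{n-2}{a-1}$ sets; this can wipe out the diversity of $\aaa$ in one step, and no inclusion-minimality argument rescues it. The paper does not prove a uniform per-step bound. Instead it splits into two cases according to whether $\Delta(\aaa),\Delta(\bb)$ are both small (Case~1) or not (Case~2). In Case~1 it first runs \emph{ordinary} shifts; an ``unsuccessful'' $i\leftarrow j$ shift (one that drops a diversity below $\binom{n-4}{a-3}$) forces strong structural constraints on $\ff(i\bar j),\ff(\bar i j)$, and a short lemma (Lemma~\ref{intershift}) shows unsuccessful shifts cannot exist on disjoint index pairs. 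The decisive tool you are missing is Lemma~\ref{lemcorrshift} (positive correlation of shifted families): it rules out the remaining structure, forcing $\aaa,\bb$ to be shifted. Only \emph{then} are Daykin shifts applied, and shiftedness guarantees $|U|\ge 2$, so each step moves at most $\binom{n-4}{a-2}$ sets --- giving exactly the quantitative control your step~(a) needs. Case~2 (one max-degree is already large) is handled separately by replacing $\aaa(1),\aaa(\bar 1),\bb(1),\bb(\bar 1)$ by lex segments on $[2,n]$, which you do not address at all. Without the case split and without Lemma~\ref{lemcorrshift}, your intermediate-value scheme cannot be made to work.
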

It should be clear that Lemmas~\ref{lemproof2} and~\ref{lemproof1} imply Theorem~\ref{thmmain1}. We prove Lemma~\ref{lemproof2} using a series of transformations that would maintain the necessary properties of $\aaa$, $\bb$. The transformations would include shifts and Daykin's shifts, as well as some others. The main difficulty here would be to show that the diversities of $\aaa, \bb$ are not decreasing by too much after each transformation (i.e., to maintain the moreover condition from Lemma~\ref{lemproof2}). Another aspect that we will have to keep track of is that the transformation process will eventually converge to the desired situation. This is done by, roughly speaking, showing that each transformation makes our families closer to initial segments in lex order. Since the proof of Lemma~\ref{lemproof2} is rather involved and consists of several reduction steps, we present it in a separate subsection.

\subsection{Proof of Lemma~\ref{lemproof2}}
Arguing indirectly, among all the pairs $\aaa,\bb$ that do not satisfy the statement of Lemma~\ref{lemproof2}, let us take the pair that is minimal in the following sense: there is no pair $\aaa_1,\bb_1$ other than $\aaa,\bb$ violating the statement of Lemma~\ref{lemproof2} and such that $\aaa_1 \prec' \aaa$, $\bb_1\prec'\bb$, where $\ff'\prec'\ff$ if the sum of order numbers of sets in $\ff'$ in lex order is smaller than that of $\ff$.

{\sc Case~1. } Assume that  $\Delta (\aaa)\le {n-2\choose a-2}+{n-u-1\choose n-a-1},$ $\Delta(\bb)\le {n-2\choose b-2}+{n-v-1\choose n-b-1}$. In particular, this implies that
$$\gamma(\aaa)\ge {n-3\choose a-2}+{n-4\choose a-2}, \ \ \  \gamma(\bb)\ge{n-3\choose b-2}+{n-4\choose b-2}.$$
Indeed, \begin{footnotesize}$$\gamma(\aaa)=|\aaa|-\Delta(\aaa)\ge{n-1\choose a-1}-{n-4\choose a-1}+{n-u-1\choose n-a-1}-{n-2\choose a-2}-{n-u-1\choose n-a-1}= {n-3\choose a-2}+{n-4\choose a-2}.$$
  \end{footnotesize}

 In what follows, we obtain a contradiction with the minimality of $\aaa,\bb$, given that the inequalities defining Case~1 hold.
 Then we proceed to the next case.   We first ``test'' the families $\aaa,\bb$ using shifting.

W.l.o.g., assume that $a\le b$ (and thus $n\ge 2a$).

For any $i<j$, if $\gamma(S_{ij}(\aaa))\ge {n-4\choose a-3}$ or $\gamma(S_{ij}(\bb))\ge {n-4\choose b-3}$ then we call a shift {\it successful}. After a successful shift, we obtain cross-intersecting families $S_{ij}(\aaa),S_{ij}(\bb)$. Moreover, if Lemma~\ref{lemproof2} fails for $\aaa,\bb$, then it fails for $S_{ij}(\aaa),S_{ij}(\bb)$. Indeed, if not, then the families $\aaa',\bb'$ that we found will also fit for $\aaa$ and $\bb$. (The only nontrivial condition to check is the ``moreover'' condition. If, say, $\gamma(S_{ij}(\aaa))\ge {n-4\choose k-3}$, then $\gamma(\aaa')\ge \min\{{n-u-1\choose n-a-1},{n-4\choose k-3}\}\ge {n-u-1\choose n-a-1}$, where the last inequality is due to $u>3$.)

If all the shifts are successful (and we always stay in Case~1), then, by minimality, $\aaa,\bb$ are shifted.

Now, assume that an $i\leftarrow j$ shift is unsuccessful.  If this is the case then $\gamma(\aaa)-\gamma(S_{ij}(\aaa))> 2{n-4\choose a-2}$ and the same for $\bb$. Then, using Proposition~\ref{prop55} (iii), we get that the following properties hold for both $(\ff,k)\in \{(\aaa,a),(\bb,b)\}$.
\begin{itemize}
  \item[(i)] $|\ff(\bar i\bar j)|<{n-4\choose k-3}$;
  \item[(ii)] $|\ff(i\bar j)|, |\ff(j\bar i)|> 2{n-4\choose k-2}$.
  \item[(iii)] $|\ff(i\bar j)\oplus \ff(\bar i j)|> 4{n-4\choose k-2}$;
  \item[(iv)] $|\ff(i\bar j)\cap \ff(\bar i j)|<{n-4\choose k-3}$.
\end{itemize}
Here (iv) follows from the fact that the sets in $\ff(i\bar j)\oplus \ff(\bar i j)$ stay intact after the shift and thus provide a lower bound on the diversity for $S_{ij}(\ff)$. 
The following lemma is crucial for the understanding of the structure of the family of all unsuccessful  shifts.

\begin{lem}\label{intershift} If $i\leftarrow j$ shift is unsuccessful, then any $k\leftarrow l$ shift must be successful for $\{i,j\}\cap \{k,l\}=\emptyset$.
\end{lem}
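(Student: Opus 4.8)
The plan is to argue by contradiction: suppose both the $i\leftarrow j$ shift and a disjoint $k\leftarrow l$ shift (with $\{i,j\}\cap\{k,l\}=\emptyset$) are unsuccessful. Then properties (i)--(iv) above hold simultaneously for the pairs $(i,j)$ and $(k,l)$, applied to both $\aaa$ and $\bb$. The key point is that an unsuccessful shift forces \emph{both} $|\ff(i\bar j)|$ and $|\ff(j\bar i)|$ to be large (of order ${n-4\choose k-2}$) while $|\ff(i\bar j)\cap\ff(\bar i j)|$ and $|\ff(\bar i\bar j)|$ are small. In other words, almost all sets of $\ff$ either contain $i$ but not $j$, or contain $j$ but not $i$; there is essentially a ``bipartition'' of $\ff$ according to which of $i,j$ it contains. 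Since an analogous bipartition must hold for the disjoint pair $k,l$, I would combine the two to deduce that $\ff$ is concentrated on the four ``cells'' indexed by (contains $i$ or $j$) $\times$ (contains $k$ or $l$), and each of these cells is nonempty and in fact large.

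From here I would extract the contradiction via cross-intersection. First I would locate sets $A\in\aaa$ and $B\in\bb$ that I can use to violate the cross-intersecting property. Concretely: using property (ii) for $\aaa$ with the pair $(i,j)$ I get many sets in $\aaa$ containing $i$ but not $j$; restricting further by the pair $(k,l)$ (and using that $|\aaa(\bar k\bar l)|$ is small so almost all of these also split on $k$ vs $l$) I can find $A\in\aaa$ with $i\in A$, $j\notin A$, $k\in A$, $l\notin A$, and similarly $A'\in\aaa$ with $i\in A'$, $j\notin A'$, $l\in A'$, $k\notin A'$ — or whichever combination is needed. The same for $\bb$. Then I would produce a set in $\bb$ disjoint from one of these sets in $\aaa$: the intuition is that if $\aaa$ is forced to contain sets avoiding $j$ and avoiding $l$, and $\bb$ is forced to contain sets avoiding $i$ and avoiding $k$, there is too much ``room'' outside $\{i,j,k,l\}$ for cross-intersection to survive, because the pieces $\aaa(i\bar j)$, $\aaa(j\bar i)$, etc., are each so large (much larger than the total number of $(a-1)$- or $(a-2)$-subsets of $[n]\setminus\{i,j,k,l\}$ could force intersection) — more precisely, one uses that a family of size exceeding ${n-4\choose a-2}$ inside ${[n]\setminus\{i,j\}\choose a-1}$ cannot be cross-intersecting with a family of size exceeding ${n-4\choose b-2}$ in a complementary way unless it uses the coordinates $i,j,k,l$, and the four-cell structure prevents that.

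The cleanest route is probably the following counting dichotomy. Work with, say, $\aaa$. The sets in $\aaa(i\bar j)$ and $\aaa(j\bar i)$ live in ${[n]\setminus\{i,j\}\choose a-1}$; by (ii) each has size $>2{n-4\choose a-2}$. Now split each of these two families further according to membership of $k$ and $l$. If, for \emph{every} choice of how $\aaa$-sets split on $\{i,j\}$, the corresponding restriction to $\{k,l\}$ were heavily one-sided, we would contradict property (i)/(iv) for the $(k,l)$ shift being unsuccessful in a different pattern; so in fact in each of the two $\{i,j\}$-classes of $\aaa$ we find sets realizing both $\{k,l\}$-patterns. Do the same for $\bb$. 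Now pick $A\in\aaa$ and $B\in\bb$ whose patterns on $\{i,j,k,l\}$ are \emph{complementary} — e.g. $A\supset\{i,k\}$, $A\cap\{j,l\}=\emptyset$ and $B\supset\{j,l\}$, $B\cap\{i,k\}=\emptyset$ — and then observe that outside $\{i,j,k,l\}$ the relevant restricted families $\aaa(\{i,k\}\overline{\{j,l\}})$ and $\bb(\{j,l\}\overline{\{i,k\}})$ still have size larger than $\binom{n-4}{a-3}$, respectively $\binom{n-4}{b-3}$ (again by (ii), since removing two more coordinates only drops the count by a controlled factor, using $n\ge 2b$), hence in particular both nonempty, and they live in disjoint coordinate universes $\{i,k\}\cup([n]\setminus\{i,j,k,l\})$ versus $\{j,l\}\cup([n]\setminus\{i,j,k,l\})$ — wait, not disjoint. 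So instead I would use cross-intersection directly: $\aaa(\{i,k\}\overline{\{j,l\}})$ and $\bb(\{j,l\}\overline{\{i,k\}})$ must be cross-intersecting as families in ${[n]\setminus\{i,j,k,l\}\choose a-2}$ and ${[n]\setminus\{i,j,k,l\}\choose b-2}$, yet their sizes exceed ${n-4\choose a-2}$ and ${n-4\choose b-2}$, which together with $(a-2)+(b-2)\le n-4$ and the trivial observation that two cross-intersecting families cannot both omit a common element... no, I need the right quantitative tool here.

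Let me restate the final paragraph more carefully. The honest obstacle is pinning down exactly which quantitative fact forces the contradiction; here is how I would set it up so that only earlier results are needed. Since the $(i,j)$-shift on $\aaa$ is unsuccessful, properties (i)--(iv) give that $\gamma(\aaa)$ is essentially accounted for by $\aaa(i\bar j)\oplus\aaa(\bar i j)$, and in particular $|\aaa(i\bar j)\setminus\aaa(\bar i j)|>{n-4\choose a-2}$ and $|\aaa(\bar i j)\setminus\aaa(i\bar j)|>{n-4\choose a-2}$ (both ``directions'' are large, else the shift would be successful by Proposition~\ref{prop55}(iii)). So both $\aaa(i\bar j)$ and $\aaa(\bar i j)$ have size $>{n-4\choose a-2}$, and symmetrically for $\bb$. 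Complementing within $[n]\setminus\{i,j\}$: $\aaa(i\bar j)^c$ and $\aaa(\bar i j)^c$ are families in ${[n]\setminus\{i,j\}\choose n-2-a}$ of size $>{n-4\choose a-2}={n-4\choose n-4-(a-2)}$. Since $\aaa(i\bar j)$ is cross-intersecting with $\bb(\bar i j)$ and with $\bb(j\bar i)$... Actually the shortest correct argument: the families $\aaa(i\bar j)$ and $\bb(j\bar i)$ both live in ${[n]\setminus\{i,j\}\choose *}$ and are cross-intersecting, and likewise $\aaa(j\bar i)$ with $\bb(i\bar j)$. But both halves of $\bb$ are large. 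Apply Corollary~\ref{corkk} to the pair $\aaa(i\bar j),\bb(j\bar i)$ on ground set $[n]\setminus\{i,j\}$ (size $n-2\ge a+b-2 = (a-1)+(b-1)$): if $|\aaa(i\bar j)|\ge{x\choose n-2-(a-1)}$ then $|\bb(j\bar i)|\le{n-2\choose b-1}-{x\choose b-1}$; but $|\aaa(i\bar j)|>2{n-4\choose a-2}$ forces $x$ large enough that the right side is smaller than $2{n-4\choose b-2}<|\bb(j\bar i)|$, a contradiction — and the role of the \emph{disjoint} second pair $(k,l)$ is only to guarantee (via Lemma~\ref{intershift}'s hypothesis being symmetric) that we may assume the $(i,j)$-shift's unsuccessfulness without also having used up the $(k,l)$-coordinates, i.e., to rule out the degenerate possibility that largeness of $|\aaa(i\bar j)|$ is ``fake'' because it is concentrated on sets containing $k$ or $l$. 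I expect the main obstacle to be exactly this last point: making rigorous that two \emph{disjoint} unsuccessful shifts force a genuine (coordinate-free) contradiction via Corollary~\ref{corkk}, i.e., carefully choosing which restricted subfamilies to feed into the Kruskal--Katona/cross-intersecting bound and checking the resulting numeric inequality for the given range $n\ge 2b$, $u>3$.
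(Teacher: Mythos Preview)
Your proposal does not contain a proof. You try several routes --- a four-cell bipartition argument, then a direct cross-intersection argument via Corollary~\ref{corkk} applied to $\aaa(i\bar j)$ and $\bb(j\bar i)$ --- and each time you back off (``wait, not disjoint'', ``no, I need the right quantitative tool here''). The final attempt is the most problematic: you feed $\aaa(i\bar j)$ and $\bb(j\bar i)$ into a Kruskal--Katona-type bound and hope for a numeric contradiction. But notice that this argument makes \emph{no use of the pair $(k,l)$}. If it worked, it would show that a single $i\leftarrow j$ shift can never be unsuccessful --- which is not the content of the lemma, and which the surrounding analysis explicitly does not assume. Your closing paragraph concedes this: you say the role of $(k,l)$ is ``only to guarantee'' something vague about coordinates not being ``used up'', and you identify ``making this rigorous'' as the main obstacle. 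That obstacle is the entire lemma.

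The paper's proof is much shorter and stays entirely inside $\aaa$; it never touches $\bb$ or cross-intersection. The key observation you missed is that property (i) for an unsuccessful $(k,l)$-shift says $|\aaa(\bar k\bar l)| < \binom{n-4}{a-3}$, and this is exactly what one can contradict using property (iii) for the unsuccessful $(i,j)$-shift. From $|\aaa(i\bar j)\oplus\aaa(\bar i j)| > 4\binom{n-4}{a-2}$, discard the sets containing $k$ (at most $\binom{n-3}{a-2}\le 2\binom{n-4}{a-2}$ of them, using $n\ge 2a$), then discard those containing $l$ (at most $\binom{n-4}{a-2}$ more). What remains is a symmetric difference of size exceeding $\binom{n-4}{a-2}\ge\binom{n-4}{a-3}$, and every set in it corresponds to a member of $\aaa$ avoiding both $k$ and $l$. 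Hence $|\aaa(\bar k\bar l)|>\binom{n-4}{a-3}$, so property (i) fails for $(k,l)$ and that shift is successful. The whole argument is three lines of counting; no cross-intersection, no four-cell decomposition, no Corollary~\ref{corkk}.
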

\begin{proof} Using (iii), we have $$|\aaa(i\bar j)\oplus \aaa(\bar i j)|> 4{n-4\choose k-2}.$$
Recall that $n\ge 2a$. Then \begin{small}\begin{equation*}\label{eqdiff} \big|\aaa(i\overline{jk})\oplus \aaa(j\overline{ik})\big|\ge\Big|\big(\aaa(i\bar j)\oplus \aaa(j\bar i)\big)\setminus {[n]\setminus\{i,j,k\}\choose a-2}\Big|> 4{n-4\choose a-2}-{n-3\choose a-2}\ge 2{n-4\choose a-2},\end{equation*}\end{small}
since ${n-4\choose a-2}\ge {n-4\choose a-3}$ for $n\ge 2a$. Similarly,
$$\big|\aaa(i\overline{jkl})\oplus \aaa(j\overline{ikl})\big|\ge\big|\big(\aaa(i\overline{jk})\oplus \aaa(j\overline {ik})\big)\big|-{n-4\choose k-2}> {n-4\choose a-3},$$
and so (i) is not satisfied for the $k\leftarrow l$ shift. Thus, it must be successful.
\end{proof}

Lemma~\ref{intershift} implies that either the families $\aaa,\bb$ are shifted or there is an unsuccessful shift, say $1\leftarrow 2$, and in that case both $\aaa(1\bar 2)$ and $\aaa(\bar 1 2)$ are shifted. Let us show that the second situation is not possible. Indeed, apply \eqref{eqcorrshift} to $\aaa(1 \bar 2)$, $\aaa(\bar 1 2)$. We get that
$$\big|\aaa(1\bar 2)\cap \aaa(\bar 1 2)\big|\cdot  {n-2\choose a-1}\overset{\eqref{eqcorrshift}}{\ge} |\aaa(1\bar 2)|\cdot |\aaa(2\bar 1)|\overset{(ii)}{>} 4{n-4\choose a-2}^2\ge {n-3\choose a-2}^2.$$
Note that for the last inequality we used $2a\le n$. But ${n-4\choose a-3}{n-2\choose a-1} = \frac {(a-2)(n-2)}{(n-3)(a-1)}\cdot{n-3\choose a-2}^2\le {n-3\choose a-2}$,\footnote{We have $\frac {a-2}{a-1}\le \frac {n-3}{n-2}$ since $a\le n-1$.} and thus $|\aaa(1\bar 2)\cap \aaa(\bar 1 2)|> {n-4\choose k-3},$ a contradiction with property (iii) of an unsuccessful shift.

Recall the definition of a $S_{U,V}^{lex}$-shift from Section~\ref{sec3}. Find $U,V\subset [n]$ as in the definition 
and consider $S_{U,V}^{lex}(\aaa)$, $S_{U,V}^{lex}(\bb)$. Put $m:=|U|$. Clearly, $m>0$ since $\aaa,\bb$ do not form initial segments in the lexicographical order due to small maximal degree. Moreover, $m\ne 1$ since the families $\aaa,\bb$ are shifted.  Therefore, $m\ge 2$. First, $|\aaa|=|S_{U,V}^{lex}(\aaa)|,|\bb|=|S_{U,V}^{lex}(\bb)|.$ Next, we  move at most ${n-|U|-|V|\choose a-|U|}\le {n-4\choose k-2}$ sets to obtain $S_{U,V}^{lex}(\aaa)$ from $\aaa$. Therefore, we have $\gamma(S_{U,V}^{lex}(\aaa))\ge {n-4\choose k-3}$. Finally, Lemma~\ref{lemday} implies that $S_{U,V}^{lex}(\aaa)$ and $S_{U,V}^{lex}(\bb)$ are cross-intersecting.
Thus, we get a contradiction with the minimality of $\aaa,\bb$.

{\sc Case~2. } Either $\Delta(\aaa)> {n-2\choose a-2}+{n-u-1\choose n-a-1}$ or $\Delta(\bb)>{n-2\choose b-2}+{n-v-1\choose n-b-1}$. Without loss of generality, let the first inequality hold and assume that $\Delta(\aaa)=|\aaa(1)|$. Define $\aaa_1,\bb_1$ such that $\aaa_1(1)= \mathcal L([2,n],|\aaa(1)|,k-1)$, $\aaa_1(\bar 1)=\mathcal L([2,n],|\aaa(\bar 1)|,k)$, and similarly for $\bb_1$. (That is, $\aaa_1(1),\aaa(\bar 1),\bb(1),\bb(\bar 1)$ all form lex segments on $[2,n]$.)

We may assume that $\aaa=\aaa_1,\bb=\bb_1$. Indeed, $|\aaa|=|\aaa_1|,$ $|\bb|=|\bb_1|$ and, due to Corollary~\ref{corkk2}, the pairs $(\aaa_1(1),\bb_1(\bar 1))$, $(\aaa_1(\bar 1),\bb_1(1))$ and $(\aaa_1(\bar 1),\bb_1(\bar 1))$ are cross-intersecting, implying that $\aaa_1,\bb_1$ are cross-intersecting. Moreover, $\gamma(\aaa_1)\ge \min\{|\aaa(\bar 1)|, |\aaa(1\bar 2)|\}=\min\{\gamma(\aaa),\lfloor{n-u-1\choose n-a-1}\rfloor+1\big\}.$ Therefore, either $\aaa_1,\bb_1$ may be taken as $\aaa',\bb'$ from the statement of Lemma~\ref{lemproof2}, or we get a contradiction with the minimality of $\aaa,\bb$ unless $\aaa_1=\aaa,$ $\bb_1=\bb$.

Finally, assume that $\aaa,\bb$ have the structure as above, but still, say, $\gamma(\aaa)>{n-3\choose a-2}$. This implies that
$\aaa\supset\big\{A\in {[n]\choose k}: |A\cap [3]|\ge 2\big\}$, in particular, $1\leftarrow j$ shift is successful for any $j\ge 2$ and thus, by minimality, $\aaa$ is shifted.\footnote{Note that $S_{ij}(\aaa)=\aaa$ for any $2\le i<j\le n$, because $\aaa(1)$ and $\aaa(\bar 1)$ are lex segments.}
Since, clearly, $\aaa$ does not form an initial segment in the lex order, there is a Daykin shift that is possible to apply to $\aaa,\bb$, getting ``smaller'' (in the lex sense) families. Arguing as before, the diversity of $\aaa$ will be at least ${n-4\choose a-3}$ after the application of this shift, and thus either we get a contradiction with the minimality of $\aaa,\bb$, or obtain families $\aaa'$, $\bb'$ satisfying the requirements of Lemma~\ref{lemproof2} for $\aaa,\bb$.

\section{Stability for Kruskal--Katona: proof of Theorem~\ref{thmstabkk}}\label{sec42}
Let us first show that the statement is true, provided $\ff\subset {Y\choose k}$ for some $Y$, $|Y|=n+1$. W.l.o.g., assume that $Y=[n+1]$. Consider the families \begin{align*}\label{}
                                   \aaa:=&\ \big\{[n+1]\setminus F: F\in \ff\big\},\\
                                   \bb:=&\ \Big\{F\in {[n+1]\choose k-1}: F\notin \partial (\ff)\Big\}.
                                 \end{align*}
It is easy to see that $|\aaa|=|\ff|$ and $|\bb| = {n+1\choose k-1}-|\partial (\ff)|.$ Moreover, the families $\aaa$ and $\bb$ are cross-intersecting (cf. the proof of Corollary~\ref{corkk}). Putting $m:=n+1,\ a:=n+1-k,\ b:=k-1$, we get that $\aaa\subset {[m]\choose a}$ and $|\aaa|\ge {m-1\choose a-1}- {y\choose a-1}+{x\choose m-a-1}$.  Next, $\gamma(\aaa) = \gamma_{KK}(\ff,n)\ge {x\choose k-1}={x\choose m-a-1}$. Therefore, applying Theorem~\ref{thmmain1} (with $m:=n,$ $u:=m-x-1$, $v:=m-y-1$), we conclude that $|\bb|\le {m-1\choose b-1}-{x\choose b-1}+{y\choose m-b-1}$ (indeed, having ``$>$'' here would contradict Theorem~\ref{thmmain1}). Consequently,
\begin{small}
$$|\partial \ff|= {m\choose b}-|\bb|\ge {m-1\choose b}-{y\choose m-b-1}+{x\choose b-1} = {n\choose k-1}-{y\choose n-k+1}+{x\choose k-2}.$$
\end{small}
Therefore, it is sufficient to show that we can actually assume that $\ff\subset {Y\choose k}$ for some $Y$, $|Y|=n+1$.

The proof here resembles the proof of Lemma~\ref{lemproof2}, however, a direct reduction seems to be impossible. Arguing indirectly, assume that the theorem does not hold for some $\ff$ satisfying the conditions.
Among such $\ff$, take the one that is minimal w.r.t. colex order.\footnote{More precisely, the sum of order numbers of sets from $\ff$ in colex is the smallest.}

{\sc Case 1} Assume that $\gamma_{KK}(\ff)>{n-2\choose k-1}+{n-3\choose k-2}$.
Our first step is to show that $\ff$ is shifted. Borrowing terminology from the proof of Theorem~\ref{thmmain1}, we say that the $(U,V)$-shift is {\it unsuccessful} if $\gamma_{KK}(S_{U,V}(\ff))<{x\choose k-1}$.

Assume that the $i\leftarrow j$ shift is  unsuccessful. This implies that there is a subset $X$, $X=n-1$, $i,j\notin X$, such that \begin{equation}\label{eqkkproof1}\Big|S_{ij}(\ff)\cap {X\cup\{i\}\choose k}\Big|> {n\choose k}-{y\choose n-k-1}\ge {n\choose k}-{n-3\choose k-3},\end{equation}
where the first inequality comes from the assumption on the size of $\ff$ and the second is due to $y\le n-3$.
 We may then assume that the family is shifted on $X$. Indeed, 
assume that one of the $i'\leftarrow j'$ shifts is unsuccessful for $i',j'\in X$. Then $\big|S_{i'j'}(\ff)(j')\big|<{n-3\choose k-1}$ and thus $\big|S_{ij}(S_{i'j'}(\ff))(j')\big|< {n-3\choose k-1}<{n-1\choose k-1}-{n-3\choose k-3}$. This implies that
\begin{equation}\label{eqkkproof2}\Big| S_{ij}\big(S_{i'j'}(\ff)\big) \cap {X\cup \{i\}\choose k}\Big|<{n\choose k}-{n-3\choose k-3}.\end{equation}

However,
\begin{equation}\label{eqkkproof3}\Big| S_{ij}\big(S_{i'j'}(\ff)\big) \cap {X\cup \{i\}\choose k}\Big|=\Big| S_{ij}\big(\ff\big) \cap {X\cup \{i\}\choose k}\Big|\end{equation}
due to the fact that $i',j'\in X$, and shifts inside $X\cup \{i\}$ do not alter the size of the family inside $X\cup \{i\}$. Then \eqref{eqkkproof2} and \eqref{eqkkproof3} contradict \eqref{eqkkproof1}.

Next, if $i\leftarrow j$ shift was unsuccessful then $\gamma_{KK}(\ff)-\gamma_{KK}(S_{ij}(\ff))>{n-2\choose k-1}+{n-3\choose k-2}-{x\choose k-1}\ge 2{n-3\choose k-2}$. An obvious analogue of Proposition~\ref{prop55} (iii) in our situation states that \begin{equation}\label{eqkkproof4}\min\big\{|\ff_i\setminus \ff_j|,|\ff_j\setminus \ff_i|\big\}>2{n-3\choose k-2}>{n-3\choose k-2},\end{equation} where $\ff_l:=\{F\subset {X\choose k-1}: F\cup \{l\}\in \ff\}$ for $l\in \{i,j\}.$ At the same time, due to \eqref{eqkkproof1}, we have $|\ff_i\cup \ff_j|>{n-1\choose k-1}-{n-3\choose k-3} = {n-2\choose k-1}+{n-3\choose k-2}$. Thus, putting $t:=|\ff_i\cap \ff_j|$  we get $|\ff_i|+|\ff_j|\ge {n-2\choose k-1}+{n-3\choose k-2}+t$. The combination of the last inequality and \eqref{eqkkproof4} gives $|\ff_i||\ff_j|\ge {n-2\choose k-1}({n-3\choose k-2}+t)$. At the same time, as we have seen above, the families $\ff_i,\ff_j$ are shifted. Using \eqref{eqcorrshift}, we get
\begin{equation}\label{eqkkproof5}t{n-1\choose k-1}\ge |\ff_i||\ff_j| \ge {n-2\choose k-1}\Big({n-3\choose k-2}+t\Big).\end{equation}
Clearly, $g(t):=t{n-1\choose k-1}-{n-2\choose k-1}({n-3\choose k-2}+t)$ is a monotone increasing function of $t$. We claim that for $t\le {n-3\choose k-1}$  we have $g(t)<0$. It is sufficient to show for $t={n-3\choose k-1}$. In that case, we have $g(t) = {n-3\choose k-1}{n-1\choose k-1}-{n-2\choose k-1}^2=\big(\frac{(n-1)(n-k-2)}{(n-3)(n-k)}-1\big){n-2\choose k-1}^2<0$. Therefore, if there is $t$ satisfying the displayed inequality, it must satisfy $t>{n-3\choose k-1}$, and thus we have $\gamma_{KK}(S_{ij}(\ff))\ge t>{n-3\choose k-1}$, a contradiction with the fact that $i\leftarrow j$ shift is unsuccessful.

From now we suppose that $\ff$ is shifted. Since $\ff$ clearly does not form an initial segment in the colex order, there is a Daykin $U\leftarrow V$-shift $S_{U,V}^{colex}$ that will alter $\ff$. Moreover, we have $s:=|U|=|V|\ge 2$ due to shiftedness of $\ff$. Therefore, by minimality of $\ff$, the $U\leftarrow V$ shift must be unsuccessful, and thus there exists $X$, $|X| = n-s$ and $X\cap (U\cup V)=\emptyset$, such that
\begin{equation}\label{eqkkproof666}\min\big\{|\ff_U\setminus \ff_V|,|\ff_V\setminus \ff_U|\big\}>2{n-3\choose k-2},\end{equation} where $\ff_W:=\{F\subset {X\choose k-s}: F\cup \{W\}\in \ff\}$ for $W\in \{U,V\}.$ Putting $t:=|\ff_U\cap \ff_V|$ and using shiftedness, we get the following analogue of \eqref{eqkkproof5}.
\begin{equation}\label{eqkkproof6}t{n-2\choose k-2}\ge t{n-s\choose k-s}=|\ff_U\cap \ff_V|{X\choose k-s}\ge |\ff_U||\ff_V| \ge \Big(2{n-3\choose k-2}+t\Big)^2.\end{equation}
On the one hand, if $2k\ge n$, then we have ${n-3\choose k-1}{n-2\choose k-2} = \frac {(n-k-2)(n-2)}{(k-1)(n-k)}{n-3\choose k-2}^2<2{n-3\choose k-2}^2$, and thus \eqref{eqkkproof6} can be only satisfied for $t>{n-3\choose k-1}$. On the other hand, if $2k<n$ then ${n-2\choose k-2}={n-3\choose k-2}+{n-3\choose k-3}<2{n-3\choose k-2}$, and thus \eqref{eqkkproof6} (or even \eqref{eqkkproof666}) is never satisfied. Therefore, either \eqref{eqkkproof6} cannot be true, or $\gamma_{KK}(S_{U,V}(\ff))>t\ge {n-3\choose k-1}$, which contradicts the assumption that $(U,V)$-shift was unsuccessful.
We conclude that the assumption of Case~1 contradicts minimality of $\ff$.

{\sc Case 2.} Assume that  ${x\choose k-1}\le \gamma_{KK}(\ff)\le {n-2\choose k-1}+{n-3\choose k-2}$. Therefore, there exists $X$, $|X|=n$, such that $\big|\ff\cap {X\choose k}\big|\ge {n\choose k}-{y\choose k-1}+{x\choose k-1}-{n-2\choose k-1}-{n-3\choose k-2} = {n-1\choose k}+{n-3\choose k-3}-{y\choose k-1}+{x\choose k-1}\ge {n-1\choose k}+{x\choose k-1}$. W.l.o.g., assume that $X = [n]$.
In this case, we may assume that $\ff$ is $(U,V)$-shifted for all $U\prec_{colex}V\subset [n]$: indeed, for any of these shifts we have $$\gamma_{KK}(S_{U,V}(\ff))\ge \min\big\{\gamma_{KK}(\ff), \min_{i\in [n]}d_i(S_{U,V}(\ff))\big\}\ge  \min\Big\{\gamma_{KK}(\ff),{x\choose k-1}\Big\}$$ since $\min_{i\in [n]}d_i(S_{U,V}(\ff))\ge |\ff\cap {[n]\choose k}|-{n-1\choose k}\ge {x\choose k-1}$.
In particular, ${[n-1]\choose k}\subset \ff$. Replace $\ff_{\gamma}:=\ff\setminus {[n]\choose k}$ with $\{F\cup \{n+1\}: F\in \mathcal C([n],|\ff_{\gamma}|,k-1)\}$, obtaining a new family $\ff'$. Again, one easily sees that $\gamma_{KK}(\ff')\ge {x\choose k-1}$. Next, $|\partial \ff'|\le |\partial \ff|$. Indeed, the new shadows $\ff'$ add to the shadows of $\ff\cap {[n]\choose k}$ have form $\big\{F: n+1\in F, F\in \partial \mathcal C([n], |\ff_{\gamma}|,k-1)\big\}$. However, $|\partial \mathcal C([n], |\ff_{\gamma}|,k-1)|$ is smaller than $\sum_{i\ge n+1}|\partial \{F\setminus \{i\}: F\in \ff, i\in F\}|$ due to subadditivity of the shadow of $\mathcal C([n],t,k-1)$ as a function of $t$. It follows from the Kruskal--Katona theorem (Theorem~\ref{thmkk2}) itself: we may realize families $\{F\setminus \{i\}: F\in \ff, i\in F\}$ on disjoint ground sets, and then their union has shadow larger than that of $\mathcal C([n],|\ff'|,k-1)$.

 Thus, due to minimality, $\ff = \ff'$, so $\ff\subset {[n+1]\choose k}$ and we arrive at a contradiction.

\textbf{Remark. } What we used here in essentially \cite[Lemma~3.2]{KLo}, and the elegant way of seeing subadditivity is borrowed from \cite[Lemma~3.1(iii)]{KLo}.

\section{Uniform $r$-wise $t$-intersecting families. Proof of Theorem~\ref{thmmain2}}\label{sec4}
Recall that the families $\ff_1,\ldots, \ff_r$ are {\it cross $t$-intersecting} if for any $F_1\in \ff_1,\ldots, F_r\in \ff_r$ we have $|F_1\cap \ldots \cap F_r|\ge t$.


  Take any family $\ff$ satisfying the conditions of the theorem.

  Suppose that $\gamma(\ff)>0$. This implies that $\ff$ is $2$-wise $(r+t-2)$-intersecting. Indeed, if $F_1\cap F_2=\{i_1,\ldots, i_l\}$ for $l\le r+t-3$, then, given that $\gamma(\ff)>0$, for each $j\in[\min\{r-2,l\}]$ we can find $F_{i_j}\in \ff$ such that $i_j\notin F_{i_j}$, and thus $|F_1\cap F_2\cap F_{i_1}\cap\ldots\cap F_{i_l}|\le t-1$.

  Assume that $\ff$ is $2$-wise $(r+t-1)$-intersecting. Then  $\ff(\bar x)$ is $2$-wise $(r+t-1)$-intersecting for any $x\in[n]$, and, by a theorem of Frankl \cite{Fra78}, $\gamma(\ff)\le |\ff(\bar x)|\le {n-r-t\choose k-r-t+1}$ for $n>2(r+t)k$.

  Assume now that there exist $F_1, F_2\in \ff$ that intersect in $r+t-2$ elements. W.l.o.g., suppose that $F_1\cap F_2=[r+t-2]$. The $r$-wise $t$-intersecting property implies \begin{equation}\label{eqrwise1}|F\cap [r+t-2]|\ge r+t-3\ \ \ \ \ \text{for all }F\in \ff. \end{equation}
  For each $i\in [r+t-2]$, define $$\ff_i:=\big\{F\in \ff: F\cap [r+t-2]=[r+t-2]\setminus \{i\}\big\}.$$
  The families $\ff_1\ldots, \ff_{r+t-2}$ are cross 2-intersecting subfamilies of ${[r+t-1,n]\choose k-r-t+3}$. By \eqref{eqrwise1},
  \begin{equation}\label{eqrwise2} \gamma(\ff)\le \min_i|\ff_i|.
  \end{equation}
  Applying Corollary~\ref{corflst} to $\ff_1,\ff_2$, we get that one of them has size at most ${n-r-t\choose k-r-t+1}$, since $n\ge 15 k> r+t-2 +15(k-r-t+3)$.

\section{Families with $\nu(\ff)\le s$}\label{sec6}

For $X\subset[n]$, we use the standard notations $\ff(X):=\{F-X: F\in \ff\}$ and $\ff(\bar X):=\{F: F\cap X=\emptyset, F\in \ff\}$.
Let $n>k(s+1)$ and let $\ff\subset {[n]\choose k}$, $\nu(\ff)=s$. The {\it $s$-diversity} $\gamma_s(\ff)$ of $\ff$ is defined as $$\gamma_s(\ff):=\min\Big\{|\ff(\bar R):R\in {[n]\choose s}\Big\}.$$
For $s=1$ we get back to the usual notion of diversity.
Recall also the construction $$\aaa_2:=\aaa_2(n,k,s):=\Big\{A\in {[n]\choose k}:|A\cap[2s+1]|\ge 2\Big\}.$$
\begin{thm}\label{thm1}
If $\ff\subset {[n]\choose k}, \ \nu(\ff)=s$ and $n>n_0(k,s)$, then
\begin{equation}\label{eq1}
  \gamma_s(\ff)\le \gamma_s(\aaa_2)=|\aaa_2\big(\bar{[s]}\big)|.
\end{equation}
\end{thm}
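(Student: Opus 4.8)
\textbf{Proof plan for Theorem~\ref{thm1}.}

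The plan is to reduce to shifted families and then to exploit the structure forced by the matching number. First I would observe that all the relevant operations behave well: by the Proposition in Section~\ref{sec2}, applying an $i\leftarrow j$ shift does not increase $\nu(\ff)$, so after repeatedly shifting we may assume $\ff$ is shifted with $\nu(\ff)\le s$ (if the matching number drops below $s$ along the way, the extremal quantity only gets easier to beat, so I would argue that we can always keep $\nu(\ff)=s$, or handle the degenerate case separately). The key point is that shifting does not decrease $\gamma_s$ by much; more precisely I would want a monotonicity statement of the form $\gamma_s(S_{ij}(\ff))\ge \gamma_s(\ff)$ for shifted-compatible situations, analogous to Proposition~\ref{prop55}(iii), or at least that the minimum over $R$ of $|\ff(\bar R)|$ is attained (after shifting) at $R=[s]$, mirroring Proposition~\ref{prop24}(ii) for the $s=1$ case. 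This is the first thing to nail down: for a shifted family, $\gamma_s(\ff)=|\ff(\bar{[s]})|$, because $[s]$ is a ``heaviest'' $s$-set in the appropriate sense.

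Next, for a shifted $\ff$ with $\nu(\ff)=s$ I would invoke the classical stability form of the Erd\H{o}s--Ko--Rado / Hilton--Milner--Frankl circle of results for families with bounded matching number: by a theorem of Frankl (the $r$-wise / matching-number analogue used elsewhere in the paper, cf. the proof of Corollary~\ref{cor13} and the references there), for $n>n_0(k,s)$ a shifted family with $\nu(\ff)=s$ satisfies $|\ff|\le {n-1\choose k-1}+\ldots$, and more importantly the family $\ff(\bar{[s]})$ lives inside a structure with small matching number on the ground set $[s+1,n]$. Concretely, I would show that for shifted $\ff$ the trace $\ff(\bar{[s]})\subset {[s+1,n]\choose k}$ has $\nu\bigl(\ff(\bar{[s]})\bigr)\le 1$, i.e.\ it is \emph{intersecting} — this is the analogue of Lemma~\ref{lem25} (``$\ff(\bar 1)$ is $(t+r-1)$-intersecting''): removing the $s$ elements $1,\dots,s$ and using shiftedness, any two members of $\ff(\bar{[s]})$ must meet, else together with suitable singleton-avoiding shifts one builds $s+1$ pairwise disjoint sets. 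Once $\ff(\bar{[s]})$ is intersecting inside an $(n-s)$-element ground set, the Erd\H{o}s--Ko--Rado theorem (Theorem stated in the introduction) gives $|\ff(\bar{[s]})|\le {n-s-1\choose k-1}$ for $n-s\ge 2k$.

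Finally I would compute $\gamma_s(\aaa_2)=|\aaa_2(\bar{[s]})|$ explicitly and check it matches the bound obtained, thereby identifying the extremal configuration. Here $\aaa_2(\bar{[s]})=\{A\in{[s+1,n]\choose k}: |A\cap[s+1,2s+1]|\ge 1\}$, a near-maximal intersecting family on $n-s$ points, whose size is ${n-s\choose k}-{n-2s-1\choose k}$; one verifies this is at least ${n-s-1\choose k-1}$... so in fact $\aaa_2(\bar{[s]})$ is \emph{not} intersecting, which means the naive bound above is too weak and the real extremal family is $\aaa_2$, not a ``star''. The correct statement to prove is therefore that among shifted families with $\nu=s$, the quantity $|\ff(\bar{[s]})|$ is maximized by $\aaa_2$: I would set this up as an extremal problem for the trace, using that $\ff(\bar{[s]})$ must be cross-intersecting with each $\ff(\bar{[s]\setminus\{i\}}\,\{i\})$ and, crucially, that $\nu(\ff)=s$ forbids $s+1$ disjoint sets, which bounds how large $\ff(\bar{[s]})$ can be relative to the ``link'' families; a careful shifting/compression argument (in the spirit of Section~\ref{sec5}) then pins down $\aaa_2$. \textbf{The main obstacle} I anticipate is exactly this last step — showing that no shifted configuration with $\nu=s$ beats $\aaa_2$ on the trace — since, unlike the $s=1$ intersecting case where the answer is a clean ``remove all sets through the top element'', here the extremal family is the genuinely two-parameter $\aaa_2$, and ruling out competitors seems to require either a delicate case analysis on $|F\cap[2s+1]|$ together with the large-$n$ hypothesis to kill lower-order terms, or a clever reduction to a known stability theorem for families with bounded matching number.
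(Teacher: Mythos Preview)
Your plan has two genuine gaps, and the paper's proof takes a completely different route that sidesteps both.

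\textbf{Gap 1: the reduction to shifted families.} You write that ``shifting does not decrease $\gamma_s$ by much'', hoping for an analogue of Proposition~\ref{prop55}(iii). But that proposition says exactly that diversity \emph{can} drop under a single shift, and the whole machinery of Section~\ref{sec5} (tracking diversity through a sequence of shifts and Daykin shifts, arguing about ``successful'' vs.\ ``unsuccessful'' shifts) is there precisely because this is delicate already for $s=1$. You have not indicated any mechanism for controlling $\gamma_s$ through shifting, and there is no reason to expect the control to be easier for $s>1$. So ``assume $\ff$ is shifted'' is not a free move here.

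\textbf{Gap 2: the shifted case itself.} You correctly discover that $\ff(\bar{[s]})$ need not be intersecting (indeed $\aaa_2(\bar{[s]})$ is not), so the Lemma~\ref{lem25}-style argument collapses. Your fallback---set up cross-intersecting constraints between $\ff(\bar{[s]})$ and the link families and ``pin down $\aaa_2$''---is not a proof sketch but a restatement of the problem. You identify this as the main obstacle, and it is: nothing in your outline explains why $\aaa_2$ beats every shifted competitor.

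\textbf{What the paper actually does.} The paper does not shift at all. Instead it passes to an \emph{extended family} $\tilde\ff\supseteq\ff$ consisting of inclusion-minimal subsets of members of $\ff$, still with $\nu(\tilde\ff)=s$. Induction on $s$ handles the case where $\tilde\ff$ contains a singleton. Otherwise every minimal set has size $\ge 2$; the Erd\H os--Rado sunflower lemma bounds the number of minimal sets by a constant depending only on $k,s$, so for large $n$ the family $\ff$ is governed (up to $o(1){n-2\choose k-2}$) by the \emph{graph} $\G$ of $2$-element members of $\tilde\ff$. The heart of the proof is then a purely graph-theoretic lemma: if $\G$ has $\nu(\G)=s$ and no isolated vertices, there is an $s$-set $R$ with $|\G(\bar R)|\le \binom{s+1}{2}$, with equality forcing $\G=K_{2s+1}$. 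This is proved by analysing a maximum matching and the edges from its endpoints into the residual independent set. The large-$n$ hypothesis is used only to make the $o(1)$ term negligible, which is why the argument is clean.

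In short, the key idea you are missing is the reduction---via minimal covers and sunflowers---to a finite graph problem with matching number $s$; this replaces the intractable ``compare all shifted $k$-uniform families'' step in your plan with an elementary combinatorial lemma about graphs.
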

Note that $|\aaa_2\big(\bar{[s]}\big)| = (1+o(1)){s+1\choose 2}{n-s-2\choose k-2}$.

We also note that the motivation for this theorem is the famous Erd\H os Matching Conjecture. If one would have an analogous result in the range of the parameters where the EMC is still open, it would most likely lead to some major progress in the conjecture.
\begin{proof} For a family $\ff$ satisfying the requirements of the theorem,
  let $\tilde\ff\supseteq \ff$ be an {\it extended family} satisfying the following conditions:
  \begin{itemize}
    \item[(i)] For any $\tilde F\in \tilde \ff$ there exists $F\in \ff$, such that $\tilde F\subseteq F$;
    \item[(ii)] $\nu(\tilde\ff)=s$;
    \item[(iii)] $\tilde \ff$ is {\it maximal} with respect to (i), (ii).
  \end{itemize}
We prove \eqref{eq1} by induction on $s$. The case $s=1$ is implied by the result of \cite{Kup21} on the diversity of intersecting $k$-uniform families.

\textbf{Case 0.  $\exists x\in [n]: \{x\}\in \tilde\ff$. } Then $\nu(\tilde\ff(\bar x))=s-1$ and by induction
$$\gamma_s(\ff)\le \gamma_{s-1}(\ff(\bar x))\le \gamma_{s-1}(\aaa_2(n-1,k,s-1))<\gamma_s(\aaa_2(n,k,s)).$$

\textbf{Case 1. } For every $\tilde F\in \tilde\ff$ we have $|\tilde F|\ge 2$.

Define the (edge set of the) graph $\G$ by $\G:=\{\tilde F\in\tilde \ff: |\tilde F|=2\}$. Then \begin{equation}\label{eqasym}|\ff|\le \big(|\G|+o(1)\big){n-2\choose k-2}.\end{equation}
by the argumentation from \cite{Fra78b}.\footnote{
Namely, the minimal sets with respect to containment cannot form any sunflowers of size more than $ks$. Therefore, their number is less than $(ks)^k\cdot k!$ by a classic result of Erd\H os and Rado \cite{ER}.
}
Combined with the following lemma, this concludes the proof of the theorem (which we check below).
\begin{lem}\label{lemdiv2}
  Let $\G$ be (the edge set of) a graph on $n$ vertices with $\nu(\G)=s$, $s\ge 1$ and with no isolated vertices. Then there exists an $s$-element set $R$, such that
  \begin{equation}\label{eq2} |\G(\bar R)|\le {s+1\choose 2},  \end{equation}
  with equality if and only if $\G$ forms a complete graph on $2s+1$ vertices. Moreover, if $\G$ is not a subgraph of a complete graph on $2s+1$ vertices, then
\begin{equation}\label{eq3}  |\G(\bar R)|\le {s\choose 2}+1\end{equation}
\end{lem}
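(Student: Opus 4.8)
The plan is to fix a maximum matching $M=\{e_1,\dots,e_s\}$ of $\G$, write $e_i=\{a_i,b_i\}$, and set $U:=V(\G)\setminus V(M)$. By maximality, $U$ is an independent set and every vertex of $U$ has all its neighbours inside $V(M)$. The key elementary observation is: if both $a_i$ and $b_i$ have a neighbour in $U$, then in fact $N(a_i)\cap U=N(b_i)\cap U=\{w_i\}$ for a single vertex $w_i$, since two distinct such neighbours would give an augmenting path through $e_i$. Call such an $e_i$ a \emph{type-II} edge, with \emph{link} $w_i$, and call $e_i$ \emph{type-I} otherwise; a type-I edge has an endpoint with no neighbour in $U$. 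Let $p$ be the number of type-II edges, so $0\le p\le s$.

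For \eqref{eq2}, pick $R=\{c_1,\dots,c_s\}$, taking $c_i$ to be the endpoint of $e_i$ with no neighbour in $U$ when $e_i$ is of type I and either endpoint when $e_i$ is of type II, and put $D:=V(M)\setminus R=\{d_1,\dots,d_s\}$. Then $V(\G)\setminus R=D\cup U$, the set $U$ is independent, and $d_i$ has $0$ or $1$ neighbours in $U$ according as $e_i$ is type I or type II; hence $|\G(\bar R)|$ is the number of edges spanned by $D$ plus the number of $D$--$U$ edges, so $|\G(\bar R)|\le{s\choose 2}+p\le{s+1\choose 2}$, proving \eqref{eq2}. For the equality case, if $\gamma_s(\G)={s+1\choose 2}$ then every $R$ of this form satisfies ${s+1\choose 2}=|\G(\bar R)|\le{s\choose 2}+p$, which forces $p=s$ and forces $D$ to span exactly ${s\choose 2}$ edges; since $p=s$ there are no type-I edges, so every transversal of $M$ arises this way and spans a clique, whence $\G[V(M)]=K_{2s}$. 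All links then coincide, since $w_i\ne w_j$ together with the edge $b_ib_j$ (present in $K_{2s}$) would produce the matching $(M\setminus\{e_i,e_j\})\cup\{a_iw_i,a_jw_j,b_ib_j\}$ of size $s+1$. Writing $w$ for the common link, $w$ is adjacent to every vertex of $V(M)$, any other vertex of $U$ would be isolated, so $U=\{w\}$ and $\G=K_{2s+1}$; the converse is immediate.

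For \eqref{eq3}, observe that $\G$ is a subgraph of $K_{2s+1}$ exactly when $|U|\le1$, so we assume $|U|\ge2$. If $p\le1$ the bound above already gives $|\G(\bar R)|\le{s\choose 2}+1$. If $p\ge2$ and two type-II edges have distinct links, let $H$ be the graph on the set of type-II indices in which $i\sim j$ iff $w_i\ne w_j$; each edge of $H$ certifies, by the same swap-and-add argument, a non-edge $b_ib_j$ of $\G$, and since $H$ is complete multipartite it is connected once it has an edge and so has at least $p-1$ edges; thus the transversal $D$ with $d_i=b_i$ on every type-II edge spans at most ${s\choose 2}-(p-1)$ edges, whence $|\G(\bar R)|\le{s\choose 2}+1$. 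In the remaining case $p\ge2$ with all links equal to a single vertex $w$: since $|U|\ge2$ we may take $u'\in U\setminus\{w\}$, and $u'$ is non-adjacent to every endpoint of every type-II edge, hence adjacent to $b_j$ for some type-I edge $e_j$ (with $a_j$ having no neighbour in $U$). For each type-II edge $e_i$, the set $(M\setminus\{e_i,e_j\})\cup\{a_iw,b_ju'\}$ is a matching of size $s$ leaving $b_i$ and $a_j$ unmatched, so $a_j\not\sim b_i$; hence the transversal $D$ with $d_j=a_j$ and $d_i=b_i$ on every type-II edge spans at most ${s\choose 2}-p$ edges, so $|\G(\bar R)|\le{s\choose 2}$.

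I expect the only genuinely fiddly point to be the bookkeeping in the last two cases of \eqref{eq3}: verifying that each exchanged collection of edges really is a matching of the stated size, and that the produced non-edges are distinct and indeed lie among the chosen $d_i$'s; everything else reduces to the augmenting-path observation together with counting the edges inside $D\cup U$.
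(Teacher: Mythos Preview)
Your approach is essentially the paper's: fix a maximum matching, classify its edges by how their endpoints meet the leftover independent set, choose a transversal accordingly, and use augmenting--path swaps to certify non-edges. The one genuine slip is in your choice of $R$ for the basic bound \eqref{eq2}. You set $c_i\in R$ to be the endpoint of a type-I edge with \emph{no} neighbour in $U$; then $d_i\in D$ is the \emph{other} endpoint, which may have many neighbours in $U$, so your claim ``$d_i$ has $0$ neighbours in $U$ when $e_i$ is type I'' is false as written, and the bound $|\G(\bar R)|\le\binom{s}{2}+p$ does not follow. (For instance, with $s=1$, $e_1=\{a_1,b_1\}$, $U=\{u_1,u_2\}$ and $b_1\sim u_1,u_2$, your $R=\{a_1\}$ gives $|\G(\bar R)|=2>\binom{1}{2}+0$.) The fix is to swap the roles: choose $d_i$ to be the endpoint with no neighbour in $U$ for type-I $e_i$, equivalently put the ``bad'' endpoint into $R$. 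Your later case analysis already uses this corrected convention implicitly (e.g.\ you take $d_j=a_j$ with $a_j$ having no $U$-neighbour), so once the definition is repaired everything else stands.

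One remark on the comparison: your treatment of the ``moreover'' clause is in fact more careful than the paper's. The paper asserts that if all links $z_i$ coincide then $\G$ is a subgraph of $K_{2s+1}$, but this need not hold when $|U|\ge 2$ and some $u'\in U\setminus\{w\}$ is adjacent to an $x_j$ from a type-I edge. You correctly isolate this subcase ($p\ge 2$, all links equal, $|U|\ge 2$) and dispose of it with an additional swap producing $p$ non-edges incident to $a_j$, which the paper does not do.
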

We remark that Lemma~\ref{lemdiv2} completely solves the question of maximizing the $s$-diversity for graphs, and even provides us with a Hilton-Milner-type result.
\begin{proof}

  Let $(x_i,y_i)$, $i\in [s]$, be a maximal matching in $\G$. Let $Z:=[n]\setminus \cup_{i=1}^s\{x_i,y_i\}$ be the remaining vertices.
  Note that $Z$ is an independent set. Note also that $\nu(\G)=s$ implies that there are  no $z,z'\in Z$ and $i\in[s]$ such that $(z,x_i),(z',y_i)\in \G$. That is, for $i\in[s]$, either:\begin{itemize}
  \item[(a)] there is a unique $z_i\in Z$ with $(z_i,x_i), (z_i,y_i)\in \G$ and none of $(z,x_i), (z,y_i)$ are in $\G$ for $z\in Z\setminus \{z_i\}$, or
  \item[(b)] we can suppose by symmetry that $(z,y_i)\notin \G$ holds for all $z\in Z$.
\end{itemize}

  Let us delete the set $X:=\{x_1,\ldots, x_s\}$. The only possible edges that remain in $\G(\bar X)$ are edges $(y_i,y_j)$, which account for at most ${t\choose 2}$ edges, and for each $1\le i\le t$ at most one edge of the form $(y_i, z_i)$ (note that $z_i=z_{i'}$ is possible for $i\ne i'$!). Thus, there are at most ${s\choose 2}+s = {s+1\choose 2}$ edges.

  If $z_i$ is the same for all $i$ for which it is defined, then $\G$ is a subgraph of a complete graph on $2s+1$ vertices. If for some $i\ne j\in [s]$ $z_i,z_j$ are different then there is no edge between $y_i$ and $y_j$. Indeed, otherwise we can replace $x_iy_i$ and $x_jy_j$ with $z_ix_i,$ $y_iy_j$ and $x_jz_j$, thus obtaining a matching of size $s+1$. Moreover, if not all $z_i$ are the same, and $z_i$'s are defined for $t$ indices $i\in [s]$, then there are at least $t-1$ pairs $\{i,j\}$ for which $z_i\ne z_j$. In this case, the total number of edges is at most ${s\choose 2}+t-(t-1)={s\choose 2}+1$. \end{proof}

Lemma~\ref{lemdiv2} and \eqref{eqasym} imply that $\gamma_s(\ff)\le \big({s+1\choose 2}+o(1)\big){n-2\choose k-2}$, moreover, ${s+1\choose 2}$ can be replaced by ${s+1\choose 2}-1$ provided  $\mathcal G$ is not a complete graph on $2s+1$ vertices. Since $|\mathcal A_2(\bar[s])|=(1+o(1)){s+1\choose 2}{n-2\choose k-2}$, we conclude that the family $\mathcal G(\bar R)$ from \ref{eqasym} must have size ${s+1\choose 2}$, and thus $\mathcal G$ must be a complete graph on $2s+1$ vertices, say $[2s+1]$. This clearly forces all the sets from $\ff$ to intersect $[2s+1]$ in at least $2$ elements. This concludes the proof of Theorem~\ref{thm1}.\end{proof}

\section{Non-uniform families}\label{sec7}
We say that $\ff\subset 2^{[n]}$ is {\it $r$-wise $t$-union} if for any $F_1,\ldots, F_r\in \ff$ we have $|F_1\cup\ldots \cup F_r|\le n-t$. Daykin~\cite{Day2} proposed the following problem: given an $r$-wise $t$-union family $\ff\subset 2^{[n]}$,  what is the maximum possible minimum degree of $\ff$?
\begin{obs}
  $\ff\subset 2^{[n]}$ is an $r$-wise $t$-intersecting family with diversity $\gamma(\ff)=d$ if and only if the family of the complements $\ff^c:=\{[n]\setminus F:F\in \ff\}$ is an $r$-wise $t$-union family of minimum degree $d$.
\end{obs}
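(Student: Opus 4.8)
The plan is to exploit the fact that complementation $F\mapsto [n]\setminus F$ is an involutive bijection of $2^{[n]}$ onto itself, under which the $r$-wise $t$-intersecting condition and the $r$-wise $t$-union condition are interchanged via De Morgan's laws, while the diversity of $\ff$ becomes the minimum degree of $\ff^c$.

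First I would verify the equivalence of the two structural conditions. Fix $F_1,\dots,F_r\in\ff$. De Morgan's law gives $[n]\setminus(F_1\cap\dots\cap F_r)=([n]\setminus F_1)\cup\dots\cup([n]\setminus F_r)$, so $|F_1\cap\dots\cap F_r|\ge t$ holds if and only if $|([n]\setminus F_1)\cup\dots\cup([n]\setminus F_r)|\le n-t$. As $(F_1,\dots,F_r)$ ranges over all $r$-tuples of members of $\ff$, the tuple $([n]\setminus F_1,\dots,[n]\setminus F_r)$ ranges over all $r$-tuples of members of $\ff^c$, and no member is lost or repeated because complementation is a bijection. Hence $\ff$ is $r$-wise $t$-intersecting precisely when $\ff^c$ is $r$-wise $t$-union.

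Next I would match the numerical parameter. For each $i\in[n]$ one has $i\notin F\iff i\in[n]\setminus F$, so $|\{F\in\ff: i\notin F\}|=|\{G\in\ff^c: i\in G\}|$. Taking the minimum over $i\in[n]$ turns the left-hand side into $\gamma(\ff)$ and the right-hand side into $\min_{i\in[n]}d_{\ff^c}(i)$, i.e.\ the minimum degree of $\ff^c$. Therefore $\gamma(\ff)=d$ if and only if $\ff^c$ has minimum degree $d$. Combining this with the previous paragraph yields the Observation.

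There is no genuine obstacle here: the whole argument is just unfolding the definitions together with De Morgan's laws and the bijectivity of complementation. The only point requiring (minimal) care is to record that, because complementation is a bijection, quantification over $r$-tuples and over single elements transfers between $\ff$ and $\ff^c$ without any loss or repetition.
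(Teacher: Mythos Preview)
Your argument is correct and is exactly the natural unfolding of definitions via De Morgan's laws and the bijectivity of complementation that the paper implicitly has in mind; indeed, the paper does not supply a proof at all, labeling the statement an ``Observation'' precisely because it follows immediately from the definitions in the way you describe.
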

Daykin \cite{Day2} proposed the following conjecture.
\begin{gypo}[\cite{Day2}]\label{conjday}
  If $r\ge 3$ and $\ff\subset 2^{[n]}$ is $r$-wise union then its minimum degree is at most $2^{n-r-1}$.
\end{gypo}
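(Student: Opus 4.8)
The plan is to reduce Conjecture~\ref{conjday} to the shifted case, which is already settled. By the Observation just before the conjecture, complementation turns an $r$-wise $1$-union family of minimum degree $d$ into an $r$-wise intersecting family $\G\subseteq 2^{[n]}$ with $\gamma(\G)=d$, so it suffices to prove that every $r$-wise intersecting $\G\subseteq 2^{[n]}$ satisfies $\gamma(\G)\le 2^{n-r-1}$. When $\G$ is shifted this is exactly Corollary~\ref{cor14} with $t=1$: the hypothesis $t\le 2^r-2r$ becomes $1\le 2^r-2r$, which holds for all $r\ge 3$, and the extremal family of that corollary with $t=1$, namely $\{A\subseteq[n]:|A\cap[r+1]|\ge r\}$, is precisely the complement of Daykin's tight example. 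The subtlety is that diversity is \emph{non-increasing} under the shift $S_{ij}$ (indeed $d_{S_{ij}(\G)}(i)\ge\max\{d_\G(i),d_\G(j)\}$ while the other degrees are unchanged, so $\Delta(S_{ij}(\G))\ge\Delta(\G)$ and hence $\gamma(S_{ij}(\G))\le\gamma(\G)$), so one cannot simply shift $\G$ to a shifted family: instead one must transform $\G$ while keeping its diversity above $2^{n-r-1}$, in the spirit of the proofs of Lemma~\ref{lemproof2} and Theorem~\ref{thmstabkk}.

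I would argue indirectly, choosing among $r$-wise intersecting families with $\gamma>2^{n-r-1}$ a counterexample $\G$ that is minimal first in $\gamma$ and then in the $\prec'$-order on $2^{[n]}$ (sum of lexicographic indices of its members). The transformations available are the shifts $S_{ij}$ and Daykin-type compressions $S_{U,V}$ on $2^{[n]}$; both preserve $r$-wise intersection (for $S_{ij}$ this is standard; for $S_{U,V}$ one adapts the inclusion-minimality argument of Lemma~\ref{lemday}) and the size of the family. Call a transformation \emph{successful} if it keeps the diversity above a fixed threshold of order $2^{n-r-1}$ with a suitable buffer, chosen so that a successful transformation outputs a family still governed by the minimality of $\G$. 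If every shift is successful, $\prec'$-minimality forces $\G$ to be shifted, whence $\G$ is a shifted counterexample, contradicting Corollary~\ref{cor14}; so some shift is unsuccessful.

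The structural input for an unsuccessful shift is the following observation: since an $r$-wise intersecting family is in particular $2$-wise intersecting, for any $i<j$ the families $\G(i\bar j)$ and $\G(\bar i j)$ are cross-intersecting subfamilies of $2^{[n]\setminus\{i,j\}}$, so $|\G(i\bar j)|+|\G(\bar i j)|\le 2^{n-2}$; for a Daykin $S_{U,V}$-shift with $s:=|U|=|V|\ge 2$ the analogous families $\G_U,\G_V\subseteq 2^{[n]\setminus(U\cup V)}$ satisfy $|\G_U|+|\G_V|\le 2^{n-2s}$. Using the diversity-drop bound (the non-uniform analogue of Proposition~\ref{prop55}(iii)), an unsuccessful shift $S_{ij}$ forces $\min\{|\G(\bar i j)\setminus\G(i\bar j)|,\,|\G(i\bar j)\setminus\G(\bar i j)|\}$ to be too large to be compatible with the cross-intersection bound above; and an analogue of Lemma~\ref{intershift} (two disjoint unsuccessful shifts cannot coexist, since disjoint shifts perturb $\G(i\bar j)\oplus\G(\bar i j)$ only mildly) reduces us to the case where the single unsuccessful shift is $1\leftarrow 2$ and $\G(1\bar 2),\G(\bar 1 2)$ are both shifted, where a further short computation gives the contradiction. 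Finally, once $\G$ is shifted, if it is not an initial segment in the colex order then a Daykin compression applies and, being successful, again contradicts minimality.

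The main obstacle is the calibration of ``successful'' versus ``unsuccessful''. In the dense (non-uniform) regime the target $2^{n-r-1}$ is only a $2^{-r-1}$-fraction of $2^{n}$, and for $r\ge 3$ it is at most $2^{n-4}$, so the ``budget'' that a counterexample gives for a diversity drop is small relative to the $2^{n-2}$ scale at which a single shift already operates; one must therefore show that an unsuccessful shift forces a quantitative failure of the cross-intersection inequality that is genuine on the $2^{n-r-1}$ scale, while a successful shift still preserves enough structure to feed the induction (and one must check that the ``two disjoint unsuccessful shifts'' argument survives the halving of the ambient space when a further coordinate is fixed). I expect this constant-tracking to be the crux, and it may force a mild lower bound on $n$; but since Corollary~\ref{cor14} already pins down the unique extremal configuration for every $r\ge 3$, the reduction should go through.
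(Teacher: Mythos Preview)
The statement you are attempting to prove is not a theorem of the paper but a \emph{conjecture}: the paper records that it has been verified for $r\ge 25$ in \cite{DF2} and for $r\ge 5$ in \cite{F91}, and explicitly lists it among the open problems in Section~\ref{sec8}. For $r\in\{3,4\}$ it is open, so there is no ``paper's own proof'' to compare against, and a complete argument along your lines would be a new result.

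Your reduction has a genuine obstruction, which you yourself flag but underestimate: the calibration of ``successful'' versus ``unsuccessful'' shifts cannot be made to work here. In Lemma~\ref{lemproof2} and Theorem~\ref{thmstabkk} the size hypotheses force the starting diversity to be of order ${n-3\choose k-2}$, and the correlation inequality (Lemma~\ref{lemcorrshift}) then shows that an unsuccessful shift is incompatible with shiftedness of the two slices. In your setting there is no size hypothesis: assuming $\gamma(\G)>2^{n-r-1}$ gives at best $\gamma(\G)>2^{n-4}$, while a single shift $S_{ij}$ can lower the diversity by $\min\{|\G(i\bar j)\setminus\G(\bar i j)|,\,|\G(\bar i j)\setminus\G(i\bar j)|\}$, and your cross-intersection bound $|\G(i\bar j)|+|\G(\bar i j)|\le 2^{n-2}$ allows this minimum to be as large as $2^{n-3}$ (take the two slices disjoint of size $2^{n-3}$ each). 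Hence an ``unsuccessful'' shift produces no contradiction with cross-intersection, and Lemma~\ref{lemcorrshift} is unavailable since it is stated and proved only for uniform families. That the shifted case is settled by Corollary~\ref{cor14} is exactly the easy part; the whole difficulty of Daykin's conjecture for $r\in\{3,4\}$ is that shifting can annihilate the diversity, and your scheme does not get around this.
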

This was proved in \cite{DF2} for $r\ge 25$ and in \cite{F91} for $r\ge 5$.
For the corresponding values, we have
\begin{equation}\label{eqfr1}
  \gamma(\ff)\le 2^{n-r-1} \ \ \ \text{ for all $r$-wise intersecting families } \ \ \ff\subset 2^{[n]}, \text{ where } \ r\ge 5.
\end{equation}
In \cite{F91}, it was even shown that
\begin{small}\begin{equation}\label{eqfr2}
  \gamma(\ff)\le 2^{n-r-t} \ \ \ \text{for all $r$-wise $t$-intersecting families} \ \ \ff\subset 2^{[n]}, \ \text{ where } \ r\ge 5\ \text{ and } \ t\le 2^r-2r.
\end{equation}\end{small}
The construction showing that \eqref{eqfr1} and \eqref{eqfr2} are optimal is
$$\big\{A\subset [n]: |A\cap [r+t]|\ge r+t-1\big\}.$$
Note that removing from the family above some or all of the members containing $[r+t]$ will not alter the diversity. Therefore, the optimal families are not unique.

Next, let us consider the case of ($2$-wise) $t$-intersecting families for $t\ge 2$. The following classic result was proved by Katona \cite{Kat}.
\begin{thm}[\cite{Kat}]\label{thmkat} If $\ff\subset 2^{[n]}$ is $t$-intersecting then
$$|\ff|\le \begin{cases}
             \sum_{i=(n+t)/2}^n{n\choose i} & \mbox{if } n+t \text{ is even}, \\
             2\sum_{i=(n+t-1)/2}^{n-1}{n-1\choose i} & \mbox{if } n+t \text{ is odd}.
           \end{cases}$$
\end{thm}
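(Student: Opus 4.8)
The plan is to reduce to shifted families and then induct on $n$, reusing the machinery of Section~\ref{sec2}. Write $N(n,t)$ for the right-hand side of the claimed bound, with the convention $N(n,t)=0$ when $n<t$. Since each shift $S_{ij}$ preserves $|\ff|$ and the $2$-wise $t$-intersecting property, applying shifts until the family stabilizes lets us assume $\ff$ is shifted. The base cases $n\le t+1$ are immediate: for $n<t$ one has $\ff=\emptyset$ (a single set of size $<t$ already fails to $t$-intersect itself); for $n=t$ only $[t]$ itself can lie in $\ff$; and for $n=t+1$ any two distinct $t$-element sets fail to $t$-intersect, so $|\ff|\le 2$. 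In all three cases the value equals $N(n,t)$.

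For the inductive step take $n\ge t+2$ and decompose $|\ff|=|\ff(1)|+|\ff(\bar 1)|$, where $\ff(1)$ and $\ff(\bar 1)$ are viewed as families on $[2,n]$. By Lemma~\ref{lem25} applied with $r=2$, $\ff(\bar 1)$ is $(t+1)$-intersecting, so $|\ff(\bar 1)|\le N(n-1,t+1)$ by the induction hypothesis. If $t\ge 2$, then $\ff(1)$ is $(t-1)$-intersecting, hence $|\ff(1)|\le N(n-1,t-1)$, and the step is completed by the identity
\begin{equation*}
N(n,t)=N(n-1,t-1)+N(n-1,t+1),
\end{equation*}
which follows from ${n\choose i}={n-1\choose i}+{n-1\choose i-1}$ by a short computation, treating the cases $n+t$ even and $n+t$ odd separately and noting that $(n-1)+(t\pm 1)$ always has the same parity as $n+t$, so the recursion stays within one parity regime. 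If $t=1$, we instead observe that $\ff(1)$ and $\ff(\bar 1)$ are cross-intersecting (if $1\in A\in\ff$ and $1\notin B\in\ff$, then $A\cap B=(A\setminus\{1\})\cap B\ne\emptyset$), and for cross-intersecting families on an $m$-element set the sum of sizes is at most $2^m$ (no complementary pair can be split between the two families), so $|\ff(1)|+|\ff(\bar 1)|\le 2^{n-1}=N(n,1)$; equivalently, this case is Kleitman's theorem. This closes the induction.

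There is no serious obstacle here: once Lemma~\ref{lem25} is available the argument is essentially bookkeeping. The only mildly delicate part is verifying the binomial recursion together with the floor/parity conventions hidden in $N(n,t)$. One wrinkle worth flagging is that for $t=1$ one cannot feed the trivial bound $|\ff(1)|\le 2^{n-1}$ into the recursion, since $N(n-1,0)<2^{n-1}$ in general; this forces the separate cross-intersecting treatment of the base layer $t=1$ (and of any step whose lower branch reaches $t=1$, which is already covered there). If a proof avoiding the recursion is preferred, one can instead run Katona's original compression argument, pushing $\ff$ toward the Hamming ball $\{A:|A|\ge(n+t)/2\}$ while preserving the $t$-intersecting property, but the split-and-recurse proof above is shorter and draws only on Section~\ref{sec2}.
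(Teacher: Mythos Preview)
The paper does not prove Theorem~\ref{thmkat}; it is quoted as a classical result of Katona and simply cited from \cite{Kat}. So there is no ``paper's own proof'' to compare against.

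Your argument is correct. Reducing to shifted families, splitting $|\ff|=|\ff(1)|+|\ff(\bar 1)|$, and using Lemma~\ref{lem25} to see that $\ff(\bar 1)$ is $(t+1)$-intersecting is exactly the standard route; the binomial identity $N(n,t)=N(n-1,t-1)+N(n-1,t+1)$ you invoke does hold in both parity regimes as you describe, and your separate handling of $t=1$ via the elementary cross-intersecting bound $|\mathcal A|+|\mathcal B|\le 2^m$ (equivalently, $\mathcal B$ avoids the complements of members of $\mathcal A$) is clean and correct. The base cases $n\le t+1$ are fine. One small presentational remark: the induction is really on $n$ with the statement ``for all $t\ge 1$ the bound holds'', so that at level $n$ you may freely invoke both $(n-1,t-1)$ and $(n-1,t+1)$; you use this implicitly and it is worth saying once explicitly. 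Otherwise nothing is missing.
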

Both functions on the right hand side are asymptotically $\big(1-\Theta(n^{-0.5})\big)2^{n-1}$ for fixed $t$. The theorem is sharp due to the following examples:
\begin{align*}
  n+t\text{ is even: } &\ \big\{X\subset [n]: |X|\ge (n+t)/2\big\}, \\
  n+t\text{ is odd: } &\ \big\{X\subset [n]: |X\cap [2,n]|\ge (n+t-1)/2\big\}.
\end{align*}

These two families have diversity $ (1- O_t(n^{-0.5}))2^{n-2}$. On the other hand, for any $t$-intersecting $\ff\subset 2^{[n]}$ and $x\in[n]$, $\ff(\bar x)$ is $t$-intersecting on $[n]\setminus \{x\}$. Thus, we may apply Theorem~\ref{thmkat} and conclude that $\gamma(\ff)\le \big(1-\Omega_t(n^{-0.5})\big)2^{n-2}$ for any $t$-intersecting family $\ff\subset 2^{[n]}$.
We formulate these findings in a proposition.
\begin{prop} For integer $t\ge 2$, the largest diversity $\gamma$ of a $t$-intersecting family $\ff\subset 2^{[n]}$ satisfies $$\gamma = \Big(1-\Theta\big(n^{-0.5}\big)\Big)2^{n-2}.$$
\end{prop}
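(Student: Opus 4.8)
The plan is to sandwich the largest diversity between two matching bounds, both flowing from Katona's Theorem~\ref{thmkat}. The upper bound is immediate: given any $t$-intersecting family $\ff\subset 2^{[n]}$ and any $x\in[n]$, the family $\ff(\bar x)=\{F\in\ff:x\notin F\}$ is $t$-intersecting and lives on the $(n-1)$-element ground set $[n]\setminus\{x\}$, so Theorem~\ref{thmkat}, applied with $n-1$ in place of $n$, bounds $|\ff(\bar x)|$. Since $\gamma(\ff)=\min_x|\ff(\bar x)|$, it follows that $\gamma(\ff)$ is at most the right-hand side of Theorem~\ref{thmkat} for ground set $[n]\setminus\{x\}$.

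For the lower bound I would take the extremal family witnessing sharpness of Theorem~\ref{thmkat} on the full ground set $[n]$: when $n+t$ is even, $\mathcal{K}=\{X\subset[n]:|X|\ge(n+t)/2\}$, which is $t$-intersecting because $|A\cap B|\ge|A|+|B|-n\ge t$; when $n+t$ is odd, $\mathcal{K}=\{X\subset[n]:|X\cap[2,n]|\ge(n+t-1)/2\}$, which is $t$-intersecting by the same inclusion--exclusion applied on $[2,n]$. In the even case $\mathcal K$ is invariant under permutations of $[n]$, so $\gamma(\mathcal K)=|\mathcal K(\bar x)|=\sum_{j\ge(n+t)/2}{n-1\choose j}$ for every $x$; in the odd case one has $|\mathcal K(\bar 1)|=\sum_{j\ge(n+t-1)/2}{n-1\choose j}$ and $|\mathcal K(\bar i)|=2\sum_{j\ge(n+t-1)/2}{n-2\choose j}$ for $i\ge 2$, and $\gamma(\mathcal K)$ is the smaller of the two. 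In every case the quantity in question is a partial sum of a row of Pascal's triangle that starts a fixed distance, depending only on $t$, above the median.

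What remains, and where the real work lies, is the elementary estimate that such a partial sum equals $\bigl(1-\Theta(n^{-1/2})\bigr)2^{n-2}$: writing the Katona bound for ground set size $m$ in the even case as $2^{m-1}-\frac12\sum_{(m-t)/2<i<(m+t)/2}{m\choose i}$ and using that each of the $\Theta(1)$ middle binomial coefficients is $\Theta(2^m m^{-1/2})$ by Stirling's formula, and arguing likewise in the odd case and for the lower-bound partial sums above. Plugging in $m=n-1$ turns both bounds into $\bigl(1-\Theta(n^{-1/2})\bigr)2^{n-2}$, which yields the proposition. The main obstacle is thus not structural but bookkeeping: one must treat the two parity cases of Theorem~\ref{thmkat} and check that the error term is genuinely $\Theta(n^{-1/2})$ --- an upper bound on the surplus of the construction together with a lower bound on the deficiency of every $t$-intersecting family --- rather than merely $O(n^{-1/2})$.
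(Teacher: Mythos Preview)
Your proposal is correct and follows essentially the same approach as the paper: the upper bound comes from applying Katona's theorem to $\ff(\bar x)$ on the $(n-1)$-element ground set, and the lower bound comes from computing the diversity of the extremal Katona families, with the asymptotics handled via the central binomial estimate. The paper states these steps more tersely, but your added detail on the parity cases and the Stirling-based $\Theta(n^{-1/2})$ estimate is exactly what is needed to make the argument complete.
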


The case $k=2$, $t=1$ is special. In \cite{DF}, the authors managed to prove
$\gamma(\ff)\le \big(1-n^{-1}\big)2^{n-2}$ for any intersecting family $\ff\subset 2^{[n]}$ and constructed an example with of an intersecting family
$\gamma(\ff)\ge \big(1-n^{-0.651})\big)2^{n-2}$. The authors of \cite{DF} conjectured that the construction attaining the lower bound is in fact optimal. Unaware of \cite{DF}, Huang \cite{HH} asked, what is the largest diversity of an intersecting family in $2^{[n]}$. He conjectured that it is at most $\big(1-\Omega(n^{-0.5}))\big)2^{n-2}$. In what follows, we solve the problem asymptotically using the tools coming from the Analysis of Boolean Functions. Both conjectures have negative answer.

\begin{prop} The largest diversity $\gamma$ of an intersecting family $\ff\subset 2^{[n]}$ satisfies $$\gamma = \Big(1-\Theta\big(\frac{\log n}n\big)\Big)2^{n-2}.$$
\end{prop}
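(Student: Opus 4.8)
The plan is to recast the problem in terms of monotone Boolean functions, obtain the upper bound from the Kahn--Kalai--Linial (KKL) theorem, and match it with an explicit self-dual construction modelled on the tribes function. First I would reduce to \emph{maximal} intersecting families: since $\ff\subseteq\ff'$ implies $|\ff(\bar i)|\le|\ff'(\bar i)|$ for every $i$, one has $\gamma(\ff)\le\gamma(\ff')$, and a maximal intersecting $\ff$ is an up-set with $|\ff|=2^{n-1}$. For such $\ff$ the indicator $f=\mathbf 1_{\ff}\colon\{0,1\}^n\to\{0,1\}$ is monotone and self-dual, $f(\bar x)=1-f(x)$, so $\E f=\tfrac12$. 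Counting the edges of the discrete cube in direction $i$ on which $f$ is pivotal gives, for monotone $f$, $d_{\ff}(i)-|\ff(\bar i)|=2^{n-1}\mathrm{Inf}_i(f)$, and together with $d_{\ff}(i)+|\ff(\bar i)|=2^{n-1}$ this yields $|\ff(\bar i)|=2^{n-2}\bigl(1-\mathrm{Inf}_i(f)\bigr)$, hence
$$\gamma(\ff)=2^{n-2}\Bigl(1-\max_{i\in[n]}\mathrm{Inf}_i(f)\Bigr).$$
Thus the proposition is equivalent to the statement that the minimum of $\max_i\mathrm{Inf}_i(f)$ over monotone self-dual $f$ on $n$ variables is $\Theta(\log n/n)$. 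The upper bound on $\gamma$ is then immediate from KKL: every $f\colon\{0,1\}^n\to\{0,1\}$ has $\max_i\mathrm{Inf}_i(f)\ge c\,\mathrm{Var}(f)\tfrac{\log n}{n}$ for an absolute $c>0$, and $\mathrm{Var}(f)=\tfrac14$ here, so $\gamma(\ff)\le 2^{n-2}\bigl(1-\Omega(\log n/n)\bigr)$ for every intersecting $\ff$.

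For the matching construction it suffices to produce a monotone self-dual $f$ with $\max_i\mathrm{Inf}_i(f)=O(\log n/n)$; the associated maximal intersecting family then has diversity $\bigl(1-O(\log n/n)\bigr)2^{n-2}$. I would build such an $f$ recursively. Partition $[n]$ into consecutive blocks $B_1,\dots,B_m$ of a common size $w$ occupying about half of $[n]$, and let $g$ be the same construction on the remaining coordinate set $V_2$ of size $\approx n/2$, the recursion terminating at $\mathrm{Maj}$ on an odd constant-size ground set. Call $B_j$ \emph{monochromatic} for a set $F$ if $B_j\subseteq F$ or $B_j\cap F=\emptyset$; set $f(F)=1$ iff the first monochromatic block is contained in $F$, and $f(F)=g(F\cap V_2)$ if no block is monochromatic. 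Then $f$ is monotone (if $f(F)=1$, enlarging $F$ keeps the decisive block contained and cannot turn an earlier non-monochromatic block all-out, so $f$ stays $1$, and monotonicity of $g$ covers the residual case) and self-dual (the set of monochromatic blocks is invariant under $F\mapsto\bar F$ while ``$B_j\subseteq F$'' and ``$B_j\cap F=\emptyset$'' swap, and for the residual case one uses self-duality of $g$). For the influences: writing $\beta:=2^{-w+1}$ for the probability that a fixed block is monochromatic, a coordinate in $B_j$ is pivotal only when $B_1,\dots,B_{j-1}$ are non-monochromatic and $B_j$ is monochromatic but for that coordinate, so its influence is at most $(1-\beta)^{j-1}\cdot 2\beta\le 2\beta$; a coordinate of $V_2$ is pivotal only when no block is monochromatic, so its influence is at most $(1-\beta)^m$ times its influence in $g$. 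Choosing $w$ so that $\beta m\in[\ln2,4]$ forces $\beta=\Theta(\log n/n)$ and $(1-\beta)^m\le\tfrac12$, and then the inductive bound $I(n)\le C\log n/n$ closes for a large enough absolute constant $C$: the $B_j$-coordinates give $\le 2\beta=O(\log n/n)$, and the $V_2$-coordinates give $\le\tfrac12 I(n/2)\le\tfrac12\cdot 2C\log n/n$. The terminal $\mathrm{Maj}$ is reached with probability $\le 2^{-\Omega(\log n)}=n^{-\Omega(1)}$ over the $O(\log n)$ levels, so it contributes negligibly.

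The monotonicity and self-duality checks and the divisibility and parity bookkeeping (arranging the terminal ground set to be odd) are routine. The one genuinely delicate point is the tuning of the construction: naive choices of block size give only $O(\log^2 n/n)$, and to reach the KKL threshold $\Theta(\log n/n)$ one must take $w$ as large as possible subject to a single level of blocks already absorbing at least half the probability mass (that is, $\beta m\gtrsim 1$), so that the recursion never inflates influences by more than a factor $\tfrac12$ per level; together with the requirement that the blocks fill half of $[n]$ this pins $w$ to $\log_2 n-\log_2\log_2 n+O(1)$ and $\beta$ to $\Theta(\log n/n)$.
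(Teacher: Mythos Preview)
Your upper bound argument is identical to the paper's: reduce to maximal intersecting families (monotone self-dual Boolean functions), express $\gamma(\ff)=2^{n-2}\bigl(1-\max_i\mathrm{Inf}_i(f)\bigr)$, and apply KKL.

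For the lower bound the two proofs diverge. The paper invokes Kalai's ``longest run'' construction on the cycle (include $F$ iff the lexicographically ordered sequence of maximal $1$-run lengths beats that of the $0$-runs) and cites \cite{Kup21} for the detailed verification that every coordinate has influence $O(\log n/n)$. You instead build a recursive self-dual tribes variant: decide by the first monochromatic block, and recurse on the remaining half of the coordinates if no block is monochromatic. Your monotonicity check is correct (enlarging $F$ can only turn a non-monochromatic block all-in, never all-out), self-duality follows level by level, and the influence bookkeeping is right: block coordinates get $\Theta(\beta)$, the damping $(1-\beta)^m\le\tfrac12$ per level keeps the inductive bound $I(n)\le C\log n/n$ stable, and the tuning $\beta m=\Theta(1)$ forces $w=\log_2 n-\log_2\log_2 n+O(1)$ and hence $\beta=\Theta(\log n/n)$.

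The trade-off: your argument is fully self-contained, whereas the paper outsources the delicate run-length analysis. On the other hand, Kalai's construction is cyclically transitive, so all influences are equal and one only has to bound the total influence; your construction is asymmetric and requires the level-by-level recursion you sketch. The divisibility and parity issues you flag are indeed routine (one can always round block sizes and absorb the slack into $V_2$). Both approaches are valid and reach the same $\Theta(\log n/n)$ threshold.
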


\begin{proof}
Consider the uniform measure $\mu$ on all subsets of $2^{[n]}$. The {\it influence} $I_i(\ff)$ of coordinate $i$ in $\ff$ is $$I_i(\ff):=2\mu\big(\{F\subset [n]: i\notin F, |\{F, F\cup\{i\}|\cap \ff=1\}\big),$$ where here and below $\mu$ stands for the uniform measure on $2^{[n]}$. The {\it total influence} is $I(\ff):=\sum_i I_i(\ff)$. In case if $\ff$ is closed upwards, we have \begin{equation}\label{eqnon1} I_i(\ff) = 2\mu\big(\{F\in \ff:i\in F\}\big)-2\mu\big(\{F\in \ff:i\notin F\}\big).\end{equation}
By the definition of diversity, it is easy to see that
\begin{equation}\label{eqnon2}
\frac 12 \max_{i\in[n]} I_i(\ff)+2\gamma(\ff) = \mu(\ff).
\end{equation}
Since increasing the size of the family cannot reduce the diversity, we may restrict ourselves to the families having measure $1/2$. Therefore, to maximize diversity, one has to minimize maximal influence of one coordinate in a family. By the famous KKL theorem \cite{KKL}, we have $\max_{i\in[n]} I_i(\ff)=\Omega(\frac{\log n}n)$, and thus $\gamma(\ff)=
\big(\frac 14-\Omega(\frac{\log n}n)\big)2^n$.

For the lower bound, consider the following example. Arrange the elements of $[n]$ on the circle, and for each set $S\subset 2^{[n]}$ form a sequence $\mathbf{u}:=(u_1,u_2,\ldots)$, where $u_i$ is the length of the $i$-th longest run of consecutive $1$'s; similarly, $\mathbf{z}:=(z_1,z_2,\ldots )$ is the sequence, in which $z_i$ is the $i$-th longest run of consecutive $0$'s.  Form an intersecting family by including all sets, for which its sequence $\mathbf{u}$ is lexicographically bigger than $\mathbf{z}$. If $n$ is even, then there may be sets which have $\mathbf u = \mathbf z$, and we take a half of them, one out of each pair of two opposite sets, arbitrarily (note that such sets must have exactly $n/2$ ones). This example was proposed by Gil Kalai \cite{GKM}, where the properties of this family were briefly described. In \cite{Kup21}, the first author gave a detailed proof of the fact that each coordinate has influence $O(\log n/n)$ in such a family for odd $n$. We note that the analysis for the even case can be done along the same lines, just taking into account that there are very few sets for which the sequences $\mathbf u$ and $\mathbf z$ coincide. Therefore, we conclude that the minimum of the largest individual influence in an intersecting monotone family is $\Theta(\log n/n)$, and, substituting into \eqref{eqnon2}, we get that the diversity of this family is  $\big(\frac 14-O(\frac{\log n}n)\big)2^n$.
\end{proof}

\section{Open problems}\label{sec8}

There are many interesting open problems that involve diversity-related quantities. We have already mentioned some of them, in particular, Conjecture~\ref{conjday}.

The next problem was discussed in Section~\ref{sec6}.
\begin{prb}Extend Theorem~\ref{thm1} to the range $n\ge Cks$ for some absolute $C>0$.\end{prb}

In Theorem~\ref{thmstabkk} we defined the diversity of families in terms of their distance from the family of all subsets of an $n$-element set. One may generalize this notion and, for a family $\ff\subset {[n]\choose k}$, define its {\it diversity with respect to a initial segment in the colex order}: $\gamma_{colex}(\ff,t):=\min|\ff\setminus \mathcal C(n,k,t)|$, where the minimum is taken over all possible orderings of the ground set. It seems plausible that the following extension of our stability result for the Kruskal--Katona is possible. We note that it generalizes the results of F\"uredi and Griggs \cite{FG}, who characterized the cases in which equality holds in the Kruskal--Katona theorem.

\begin{prb}  Let $s>0$ and $n_s>n_{s-1}>\ldots>n_1> n>k>0$ be integers,  and assume that $\ff$ is a family of $k+s$-element sets, such that $|\ff|\ge t:={n_s\choose k+s}+\ldots+{n_1\choose k+1}+{n\choose k}-{y\choose n-k}+{x\choose k-1}$ and $\gamma_{colex}(\ff,t)\ge {x\choose k-1}$ for some real numbers $k-1\le x\le n-3$, $n-k\le y\le n-3$. Then $\partial(\ff) \ge {n_s\choose k+s-1}+\ldots+{n_1\choose k}+{n\choose k-1}-{y\choose n-k+1}+{x\choose k-2}$.
\end{prb}

Fix integers $0\le a<b\le k$. Find $\max\min|\ff(\bar B)|$, where the maximum is over all $\ff\subset {[n]\choose k}$, such that $\ff$ is intersecting, $\tau(\ff)=b$ and the minimum is over all $B\in {[n]\choose a}$.

Probably, the most interesting case is $b=k$, that is, of intersecting $k$-graphs $\ff$ that cannot be covered by $k-1$ vertices. A classical result of Erd\H os and Lov\'asz \cite{EL} shows that $|\ff|\le k^k$, independent of $n$ (cf. \cite{Fra28} for the current best bound).

In \cite{FOT2} constructions with $|\ff|>\Big(\frac k2\Big)^k$ were given. However, almost all edges contain one specific vertex. Slightly more precisely, the number of edges not containing that vertex is polynomial in $k$.

Let $m(k,a)$ be the maximum above over all $n$ for $b=k$. That is, the value of $m(k,0)$ is the answer to the Erd\H os--Lov\'asz problem.

\begin{gypo}
  $m(k,1)/m(k,0)\to 0$ as $k\to \infty$.
\end{gypo}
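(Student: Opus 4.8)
Since $m(k,0)>(k/2)^k$ by \cite{FOT2} and $m(k,1)\le m(k,0)$ trivially, it suffices to prove an upper bound of the form $m(k,1)\le(\varepsilon k)^k$ for some fixed $\varepsilon<\tfrac12$ (ideally even $m(k,1)\le C^k$ for an absolute constant $C$): then $m(k,1)/m(k,0)\le(2\varepsilon)^k\to0$. So the whole point is to bound the \emph{diversity} of an intersecting $k$-graph with cover number $k$ — a quantity which, unlike the total size, is conjecturally much smaller than $m(k,0)$.

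\textbf{Reduction to a rigid coupled structure.} Fix an intersecting $\ff\subset{[n]\choose k}$ with $\tau(\ff)=k$ realizing $\gamma(\ff)=m(k,1)$, let $v$ be a vertex of largest degree, and set $\G:=\ff(\bar v)$ and $\s:=\ff(v)$ (a family of $(k-1)$-sets). Then $|\G|=|\ff|-\Delta(\ff)=\gamma(\ff)=m(k,1)$, and: $\G$ is intersecting; every $T\in\s$ is a $(k-1)$-element transversal of $\G$, because $T\cup\{v\}$ meets every $G\in\G$ while $v\notin G$ (so $\s$ and $\G$ are cross-intersecting); $|\s|=\Delta(\ff)\ge d_\ff(u)\ge d_\G(u)$ for all $u$, hence $\G$ has at least $|\s|\ge\Delta(\G)$ distinct $(k-1)$-transversals; and the fact that no $(k-1)$-set covers $\ff$ forces, splitting according to whether such a set contains $v$, both $\tau(\G)\ge k-1$ and that no $(k-1)$-set avoiding $v$ is simultaneously a transversal of $\G$ and of $\s$. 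Since $\s\ne\emptyset$, its members being $(k-1)$-transversals of $\G$ gives $\tau(\G)=k-1$ exactly.

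\textbf{The key lemma, and why it should hold.} The conjecture now follows from: \emph{an intersecting $k$-uniform family $\G$ with $\tau(\G)=k-1$ that has at least $\Delta(\G)$ distinct $(k-1)$-element transversals satisfies $|\G|\le(\varepsilon k)^k$.} The Erd\H os--Lov\'asz recursion (see \cite{EL,Fra28}) bounds $|\G|$ by $k^k$ when $\tau(\G)=k$; it fails for $\tau(\G)\le k-1$ only because the recursion can reach sub-configurations with an unbounded ``link'' — morally, a vertex $c$ of very large degree $s$. But in our setting $d_\G(c)\ge s$ gives $\Delta(\G)\ge s$, hence $\ge s$ transversals, each of size $k-1$ and each meeting all $s$ edges through $c$; analysing how a single $(k-1)$-set can meet $s$ edges through a common vertex (using sunflower/matching arguments on the links of those edges) should show that $s$ cannot be super-polynomial in $k$, as otherwise $\G$ would contain two disjoint edges (contradicting that $\G$ is intersecting) or admit a $(k-2)$-transversal (contradicting $\tau(\G)=k-1$). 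With no large link leaves, the recursion on a minimal transversal $T_0=\{t_1,\dots,t_{k-1}\}$ of $\G$ (choose $G_i\in\G$ with $G_i\cap T_0=\{t_i\}$, recurse on the links $\{G\setminus\{t_i\}:\ t_i\in G\in\G\}$, whose cover numbers drop) terminates in $\le k-1$ levels with polynomially bounded branching, giving $|\G|\le k^{O(k)}$; tracking the transversal count quantitatively through the recursion should bring this down to $(\varepsilon k)^k$, which together with Step~1 and the displayed ratio bound finishes the proof.

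\textbf{Main obstacle.} The hard part is the key lemma, and within it the propagation of the hypothesis ``at least $\Delta(\G)$ many $(k-1)$-transversals'': unlike $k$-uniformity and the cover number, this quantity is not visibly monotone under taking links or deleting vertices, and $\G$ may itself have large diversity, so one must carry along the maximum degree, the transversal count, and the support of the current family simultaneously. An alternative packaging of the same difficulty is to prove directly that $\G$ lives on a set of at most $O(k^2)$ vertices (with a small enough constant), since then $|\G|\le{|\mathrm{supp}(\G)|\choose k}$ already beats $(k/2)^k$; but establishing this support bound seems to require exactly the same analysis of how many $(k-1)$-transversals an intersecting $k$-graph with $\tau=k-1$ supported on a large ground set can possibly have.
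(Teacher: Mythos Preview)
The statement you are attempting to prove is listed in the paper as an open \emph{conjecture} (in Section~\ref{sec8}); the paper offers no proof, only the remark that the known constructions for $m(k,0)$ have almost all edges through a single vertex, with only polynomially many remaining. So there is nothing in the paper to compare your argument against, and the relevant question is whether your proposal stands on its own.

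It does not. Your reduction in the first step is sound: taking $v$ of maximum degree, the family $\G=\ff(\bar v)$ is intersecting with $\tau(\G)=k-1$, and the link $\s=\ff(v)$ supplies at least $\Delta(\G)$ distinct $(k-1)$-transversals of $\G$. But everything hinges on your ``key lemma'' that such a $\G$ must satisfy $|\G|\le(\varepsilon k)^k$ for some $\varepsilon<\tfrac12$, and you do not prove it. The heuristics you offer --- that a large link at some vertex $c$ would force many $(k-1)$-transversals each meeting many edges through $c$, and that sunflower/matching arguments ``should show'' this is impossible --- are suggestive but far from a proof: you give no mechanism that actually converts ``$\ge s$ transversals of size $k-1$'' into ``two disjoint edges'' or ``a $(k-2)$-transversal'', and the Erd\H os--Lov\'asz recursion you invoke needs a quantitative branching bound at \emph{every} node, not just a qualitative ``no large link''. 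You yourself flag the real obstruction in your final paragraph: the hypothesis ``at least $\Delta(\G)$ many $(k-1)$-transversals'' is not obviously inherited by sublinks, so the recursion cannot be run as stated. In short, the proposal is a reasonable outline of where the difficulty lies, but it leaves the conjecture exactly as open as the paper does.
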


\begin{gypo}
  There is an absolute constant $c>1$ such that $m(k,k-1)>c^k$ for all $k\ge 3$.
\end{gypo}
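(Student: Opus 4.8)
The plan is to construct, for each $k\ge 3$, an intersecting $k$-graph $\ff$ with $\tau(\ff)=k$ in which every $(k-1)$-set is disjoint from at least $c^k$ edges, and to verify the three requirements — intersecting, $\tau=k$, and $\min_{|B|=k-1}|\ff(\bar B)|\ge c^k$ — one at a time. The useful first remark is a reformulation. For $k$-uniform $\ff$ one always has $\tau(\ff)\le k$ (any edge is a cover), and $\tau(\ff)\ge k$ is exactly the statement that $\min_{|B|=k-1}|\ff(\bar B)|\ge 1$. So the conjecture asks to amplify this minimum from $1$ to $c^k$: we need an intersecting $k$-graph whose cover number is maximal in a quantitatively robust way. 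A tempting start is to take a $\tau$-critical intersecting $k$-graph with exponentially many edges — e.g.\ the construction of \cite{FOT2} with more than $(k/2)^k$ edges — and ``spread it out''; but that family is concentrated (almost all edges share one vertex), so $m(k,k-1)$ computed from it is only polynomial in $k$, which is why a genuinely different construction is needed.

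Two natural attempts illustrate the tension, and why it is real. The ``neighbourhood'' family $\ff_C:=\{F\in{[n]\choose k}:|F\setminus C|\le j\}$, for a fixed $k$-set $C$ and $j<k/2$, is intersecting and has at least ${k\choose j}\ge 2^k/\mathrm{poly}(k)$ edges when $j\approx k/2$ — so exponential size is cheap — but any $(k-1)$-subset $B\subseteq C$ is disjoint from \emph{no} edge of $\ff_C$, hence $\tau(\ff_C)<k$ and it is not even admissible. One might try to fix this by adjoining the lines of a projective plane $PG(2,k-1)$ to raise the cover number (legitimate, since $\tau(\ff\cup\ff')\ge\tau(\ff')$), but $PG(2,k-1)$ is a \emph{maximal} intersecting $k$-graph — the only $k$-sets meeting all its lines are the lines themselves — so no larger intersecting family contains it. The moral is that $\tau=k$ cannot be imported from a rigid algebraic skeleton: the maximal cover number must be produced by the very same family that supplies the spread.

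Given this, the route I would pursue is a recursive construction that raises the uniformity and the cover number by one at each of the $k$ steps, carries along at each step a small ``free block'' of fresh vertices whose choices are unconstrained, and maintains as an inductive invariant that, at uniformity $j$, no $(j-1)$-set is disjoint from fewer than a $2^{-o(j)}$ fraction of the edges. Concretely, from an intersecting $(k-1)$-graph $\ff_{k-1}$ with $\tau(\ff_{k-1})=k-1$, $|\ff_{k-1}|\ge C^{k-1}$ and the near-regularity invariant, one would build $\ff_k$ by combining relabelled copies of $\ff_{k-1}$ so that the intersecting property is forced on a bounded set of coordinates, a single new critical gadget raises $\tau$ to $k$, and the free block multiplies the edge count by a constant $\ge C$ — a constant factor per step is all we need, since $c^k$ is the target. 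The invariant then converts ``$|\ff_k|$ is exponential'' into ``every $(k-1)$-set misses exponentially many edges'' by a first-moment argument over the choice of $B$. The main obstacle is the $\tau$-increment done \emph{simultaneously} with the other two demands: forcing that no $k-1$ vertices cover the new family is a global constraint that the existence proofs for $\tau$-critical intersecting families already have to work hard for (cf.\ \cite{EL}), and making the critical gadget ``light'' — so that it does not itself create a $(k-1)$-set meeting almost all edges and thereby destroy the near-regularity invariant — while keeping everything intersecting, is where I expect the real difficulty to lie.
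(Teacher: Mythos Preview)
The statement you are attempting to prove is an \emph{open conjecture} in the paper, not a theorem; it appears in Section~\ref{sec8} (``Open problems'') and no proof is given or claimed. The paper only records the partial result that there is no $\epsilon>0$ for which $m(k,k-1)<2^{k^{1-\epsilon}}$ holds for all $k$, obtained via ``product-type constructions'' (no details supplied), together with the base case $m(3,2)=2$. So there is no paper proof to compare your proposal against.

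As for your proposal itself: you do not actually give a proof, and you say so. The recursive plan --- build $\ff_k$ from relabelled copies of $\ff_{k-1}$, add a gadget to force $\tau=k$, and maintain a near-regularity invariant --- is a reasonable heuristic, but the step you flag as ``where I expect the real difficulty to lie'' is precisely the heart of the problem: raising $\tau$ from $k-1$ to $k$ while keeping the family intersecting \emph{and} preventing any single $(k-1)$-set from covering all but a subexponential fraction of the edges. You supply no concrete gadget that achieves this, and the obstacles you yourself raise (the rigidity of $PG(2,k-1)$, the concentration of the \cite{FOT2} family) are exactly the obstructions a genuine proof would have to overcome. Your observation that the near-regularity invariant plus exponential size yields the conclusion is fine, but maintaining that invariant through the induction is the whole conjecture in disguise. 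In short, the proposal is an honest outline of the difficulty rather than a proof; and since the paper offers no proof either, the only comparison available is that the paper's unspecified ``product-type'' route (which reaches only $2^{k^{1-\epsilon}}$ along a subsequence) is different in flavour from your inductive one, and neither is known to close the gap to $c^k$.
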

It is easy to show that $m(3,2)=2$. Using product-type constructions, we managed to show that there is no such $\epsilon>0$ such that $m(k,k-1)<2^{k^{1-\epsilon}}$ holds for all $k$.

\end{document}